\documentclass{article}
\usepackage{graphicx} 

\usepackage{amsmath}
\usepackage{amsthm}
\usepackage{appendix}
\usepackage{hyperref}

\usepackage[disable]{todonotes}
\usepackage{virginialake}
\vllineartrue

\usepackage{enumerate}

\usepackage{proof}

\usepackage{tikz}
\usepackage{tikz-cd}
\usetikzlibrary{patterns,arrows, topaths, calc, positioning}
\usepackage{algorithm}%
\usepackage{algorithmicx}%
\usepackage{etoolbox}

\usepackage[capitalise]{cleveref}
\usepackage{thmtools} 
\usepackage{authblk}

\newcommand{\anupam}[1]{\todo{AD: #1}}
\newcommand{\tikhon}[1]{\todo{TP: #1}}

\renewcommand{\neg}[1]{#1^{\perp}}
\newcommand{\negneg}[1]{#1^{\perp\perp}}

\newcommand{\df}{:=}
\newcommand{\bnf}{::=}

\newcommand{\IH}{\mathit{IH}}

\newcommand{\parto}{\rightharpoonup}

\renewcommand{\epsilon}{\varepsilon}
\renewcommand{\emptyset}{\varnothing}

\newcommand{\UniNumA}{u}
\newcommand{\UniNumB}{v}
\newcommand{\UniNumC}{w}

\newcommand{\UniSetA}{U}

\newtheorem{theorem}{Theorem}[section]

\newtheorem{proposition}[theorem]{Proposition}
\newtheorem{lemma}[theorem]{Lemma}
\newtheorem{corollary}[theorem]{Corollary}
\newtheorem{fact}[theorem]{Fact}

\theoremstyle{definition}
\newtheorem{definition}[theorem]{Definition}
\newtheorem{convention}[theorem]{Convention}

\theoremstyle{remark}
\newtheorem{remark}[theorem]{Remark}
\newtheorem{example}[theorem]{Example}

\newcommand{\Var}{\mathsf{Var}}
\newcommand{\FVar}{\mathsf{FV}}
\newcommand{\Prop}{\mathsf{Prop}}
\newcommand{\Ord}{\mathsf{Ord}}
\newcommand{\Fm}{\mathsf{Fm}}

\newcommand{\muord}[1]{\mu^{#1}}
\newcommand{\nuord}[1]{\nu^{#1}}

\newcommand{\focusedNeg}{\Uparrow}
\newcommand{\focusedPos}{\Downarrow}
\newcommand{\emptyZone}{\mathord{\cdot}}

\newcommand{\lr}[1]{#1_l}
\newcommand{\rr}[1]{#1_r}

\newcommand{\MALL}{\mathsf{MALL}}
\newcommand{\muMALL}{\mu\MALL}
\newcommand{\muMALLord}[1]{\muMALL_{#1,#1}}

\newcommand{\ind}{\mathsf{ind}}
\newcommand{\coind}{\mathsf{coind}}

\newcommand{\cf}{\mathit{cf}}
\newcommand{\muMALLordcf}[1]{\muMALL_{#1,#1}^{\cf}}
\newcommand{\muMALLordcut}[2]{\muMALLord #1^{#2}}

\newcommand{\muMALLFord}[1]{\muMALL\mathsf{F}_{#1,#1}^\cf}

\newcommand{\identity}{\mathsf {id}}
\newcommand{\cut}{\mathsf{cut}}
\newcommand{\dcut}[1]{#1\text{-}\cut}
\newcommand{\store}{\mathsf{s}}
\newcommand{\release}{\mathsf{r}}
\newcommand{\decide}{\mathsf{d}}

\newcommand{\seqar}{\Rightarrow}

\newcommand{\proves}{\vdash}

\newcommand{\inv}[1]{#1^{-1}}

\newcommand{\dom}{\mathop{\mathrm{dom}}}
\newcommand{\pair}[2]{\langle #1, #2 \rangle}
\newcommand{\UCF}{\mathtt{U}} 

\newcommand{\MinMach}{\mathtt{M}}
\newcommand{\INC}{\mathtt{INC}}
\newcommand{\JZDEC}{\mathtt{JZDEC}}
\newcommand{\reach}[1]{\to_{#1}}

\newcommand{\mred}{\mathrm{m}}
\newcommand{\Tred}{\mathrm{T}}

\newcommand{\KleeneO}{\mathcal{O}}
\newcommand{\ordO}[1]{\vert #1 \vert_{\KleeneO}}
\newcommand{\lessO}{<_{\KleeneO}}

\newcommand{\ips}[1]{\mathrm{ips}{( #1 )}} 
\newcommand{\TrueIPS}[1]{\mathrm{Tr}{( #1 )}}
\newcommand{\Witn}{\mathrm{Wit}}

\newcommand{\ld}{\mathop{\mathrm{ld}}} 
\newcommand{\natframe}[1]{\mathbin{\vcenter{\hbox{\resizebox{!}{0.075in}{#1}}}}}
\newcommand{\natsum}{\natframe{$\boxplus$}}
\newcommand{\natprod}{\natframe{$\boxtimes$}}
\newcommand{\bignatsum}{\vcenter{\hbox{\resizebox{\widthof{$\bigoplus$}}{!}{$\boxplus$}}}}

\newcommand{\valuation}[2]{\vert #1 \vert^{#2}}
\newcommand{\rank}{\mathrm{rk}}
\newcommand{\rk}[1]{\rank(#1)}

\newcommand{\mult}{\mathop{\mathrm{mult}}}

\newcommand{\Resource}{R}
\newcommand{\aug}{\mathrm{aug}}
\newcommand{\lock}{\mathrm{lock}}
\newcommand{\tup}{\mathrm{tup}}
\newcommand{\ins}{\mathit{ins}} 
\newcommand{\encIns}[1]{[ #1 ]} 

\newcommand{\VARacc}{\mathit{acc}}
\newcommand{\VARkey}{\mathit{key}}
\newcommand{\VARsym}{c}

\newcommand{\FORAcc}{\mathit{Acc}}
\newcommand{\FORBase}{\mathit{Base}}
\newcommand{\FORComp}{\mathit{Comp}}
\newcommand{\FORInd}{\mathit{Ind}}

\newcommand{\FORNext}{\mathit{Next}}
\newcommand{\FORPass}{\mathit{Pass}}
\newcommand{\FORStep}{\mathit{Step}}

\title{Wider systems for linear logic with fixed points: proof theory and complexity}
\author[1]{Anupam Das}
\author[2]{Tikhon Pshenitsyn}
\affil[1]{University of Birmingham, UK}
\affil[2]{Steklov Mathematical Institute of RAS, Russia}

\date{}

\begin{document}

\maketitle

\begin{abstract}
    We investigate infinitary wellfounded systems for linear logic with fixed points, with transfinite branching rules indexed by some closure ordinal $\alpha$ for fixed points. 
    Our main result is that provability in the system for some computable ordinal $\alpha$ is complete for the $\omega^{\alpha^\omega}$ level of the hyperarithmetical hierarchy. 

To this end we first develop proof theoretic foundations, namely cut elimination and focussing results, to control both the upper and lower bound analysis. Our arguments employ a carefully calibrated notion of formula rank, calculating a tight bound on the height of the (cut-free) proof search space.
\end{abstract}

\section{Introduction}

\emph{Fixed points} abound in logic and computation, providing a robust way to formalise induction and recursion.
The last 15-20 years has seen renewed interest in fixed points from the point of view of \emph{structural proof theory}, building a finer picture of their logical behaviour, e.g.\ see \cite{Baelde-Miller,baelde2012least,BaeldeDoumaneSaurin2016CSL,BaeldeDoumaneKuperberg2022LICS,DeSaurin2019TABLEAUX,DePellissierSaurin2021PPDP,Jafarrahmani2021CSL,DeJafarrahmaniSaurin2022FSTTCS} in just the setting of linear logic. 
These investigations cover several proposals for reasoning about fixed points, including systems with explicit (co)induction rules, cyclic or non-wellfounded proofs, and systems with infinitely branching rules. 
The setting of {linear logic} allows us to compare these methodologies in a robust way, largely free from artefacts of, say, classical or intuitionistic logic, or first-order theories, such as arithmetic.
In particular recent works, such as \cite{das_et_al:LIPIcs.FSCD.2022.20,DeJafarrahmaniSaurin2022FSTTCS,DasDS23}, have improved our understanding of how various methodologies compare in terms of the theorems they admit, and the complexity of reasoning therein.

In this paper we extend this line of work to systems whose branching is indexed by an arbirtary (computable) ordinal. 
Previous work such as \cite{DeJafarrahmaniSaurin2022FSTTCS,DasDS23} only considered $\omega$-branching which are suitable for only continuous models, where definable operators reach their fixed points in $\leq \omega$ many steps.
General models, e.g.\ of the arithmetical/first-order or modal $\mu $-calculus, may admit arbitrary closure ordinals (see, e.g., \cite{GurevichShelah1986,Moellerfeld2002Thesis,AfshariLeigh2013CSL}).
For instance $\mu x\,  (1 + \nu y\,  (x \times y))$ represents the class of all (countable) well-founded trees, when interpreted as a subset of Cantor space.
Apart from the expanded scope of our investigations, this work serves as a further testbed for the tools developed in structural proof theory for handling fixed points in the last couple decades.

\subsection{Contribution}
We investigate the proof theory and complexity of the systems $\muMALLord \alpha$ for (multiplicative additive) linear logic with fixed points, first proposed by De in \cite{De2022Thesis}, parametrised by an arbitrary closure ordinal $\alpha$ for fixed points.
We prove \emph{cut-elimination} and \emph{focussing} results, by exploiting an ordinal measure of formulas, \emph{rank}, and adapting existing methods.
Our definition of rank is carefully calibrated in order to tightly bound the height of the cut-free proof search space, unlike previous approaches, e.g.\ \cite{AlberucciKS14,De2022Thesis}. 
The main application of these foundational developments is a classification of the complexity of system $\muMALLord\alpha$ for computable $\alpha$: it is complete for the $\omega^{\alpha^\omega}$ level of the hyperarithmetical hierarchy (under Turing reductions).
The upper bound follows cleanly from the aforementioned property of ranks, under a natural encoding of cut-free proof search. For the lower bound we encode the problems of \emph{true computable infinitary sentences} complete for each level of the hyperarithmetical hierarchy, exploiting our focussing result to show adequacy.



\subsection{Structure of the paper}
The remainder of this paper is structured as follows.
In \cref{sec:prelims} we define the systems $\muMALLord\alpha$ and state our main result, in particular recapping representation issues around computable ordinals.
In \cref{sec:pt} we present our foundational proof theoretic results for $\muMALLord\alpha$, cut-elimination and focussing, for all $\alpha$, assuming termination of cut-free proof search.
The latter is addressed in \cref{sec:ranks}, where give our notion of formula rank and prove its tightness.
In \cref{sec:upper-bd,sec:lower-bd} we prove the upper and lower bounds, respectively, for our logics.
Finally in 
\cref{sec:concs} we present some further pespectives and concluding remarks.

\section{Preliminaries}
\label{sec:prelims}

In this section we shall recap systems for linear logic (with fixed points), and present the ordinal branching systems that comprise the main focus of this work.

Throughout let us fix a set $\Var$ of \textbf{variables}, written $x,y$ etc., and a disjoint set $\Prop$ of \textbf{propositional symbols}, written $p,q$ etc.

\subsection{Multiplicative additive linear logic with fixed points}
We assume basic familiarity with multiplicative additive linear logic $\MALL$, for which \cite{Girard1987TCS,sep-logic-linear} are useful introductions.

\textbf{$\muMALL$ formulas}, written $A,B,C $ etc., are generated as follows,
\begin{equation}
    \label{eq:muMALL-fmlas}
    \begin{array}{rcc@{\ \mid \ }c@{\ \mid \ }c@{\ \mid \ }c@{\ \mid \ }c@{\ \mid \ }c@{\ \mid \ }c}
A,B, \dots  & \bnf &  
x & p &  1 & 0 &  A \vlte B & A \vlor B & \mu x  A \\
& & & \neg p &  \bot & \top &  A \vlpa B & A \vlan B &  \nu x A
\end{array}
\end{equation}
where $x \in \Var$ and $p \in \Prop$.
The set of \textbf{free variables} of a formula $A$, written $\FVar(A)$, is defined as expected, construing $\mu$ and $\nu$ as variable binders.

The $\MALL$ connectives (without $\mu,\nu$) decompose classical ones each into two variants, \textbf{additive} and \textbf{multiplicative} according to the following classification:\footnote{The fixed points are not usually assigned such a designation, but both $\mu$ and $\nu$ could be considered additive.}
\begin{center}
    \begin{tabular}{c|c|c}
         & additive & multiplicative \\
         \hline
    false & $0$ & $\bot$ \\
    true & $\top$ & $1$ \\
    disjunction & $\vlor$ & $\vlpa$ \\
    conjunction & $\vlan$ & $\vlte$
    \end{tabular}
\end{center}
A \textbf{literal} is a formula of the form $p$ or $\neg p$ where $p \in \Prop$.
We extend the notation $\neg A $ to all formulas $A$, not just propositional symbols, by construing connectives in the same column in \cref{eq:muMALL-fmlas} as dual:\todo{say more about variables being self-dual}
\[
\begin{array}{r@{\ := \ }l}
     \neg x & x \\
     \negneg p & p \\
     \neg 1 & \bot \\
     \neg \bot & 1 \\
     \neg 0 & \top \\
     \neg \top & 0
\end{array}
\qquad
\begin{array}{r@{\ := \ }l}
     \neg {(A \vlte B)} & \neg A \vlpa \neg B \\
     \neg {(A \vlpa B)} & \neg A \vlte \neg B \\
     \neg{(A\vlor B)} & \neg A \vlan \neg B\\
     \neg {(A\vlan B)} & \neg A \vlor \neg B \\
     \neg{(\mu x A)} & \nu x \neg A \\
     \neg {(\nu x A)} & \mu x \neg A
\end{array}
\]
Note that negation does not change the additive/multiplicative status of a formula's main connective.
The clause setting $\neg x = x$ is justified when $x$ is bound by a $\mu$ or a $\nu$ in a formula.
That is, whenever $A$ is closed, $\neg A$ will represent its negation in any systems and semantics we consider.
\anupam{Is this okay now? Previous commentary commented above.}

\begin{figure}[t]
    \[
    \begin{array}{ccccc}
         \vlinf{\identity}{}{A, \neg A}{}
         & 
         \vlinf{1}{}{1}{}
         & 
         {\color{gray}
         \left(
         \begin{array}{c}
              \text{no rule}  \\
              \text{for $0$}
         \end{array}
         \right)}
         &
         \vliinf{\vlte}{}{\Gamma, \Delta, A \vlte B}{\Gamma, A}{\Delta, B}
         & \vlinf{\vlor}{}{\Gamma, A_0 \vlor A_1}{\Gamma, A_i}
         \\
         \noalign{\smallskip}
         \vliinf{\cut}{}{\Gamma, \Delta}{\Gamma, A}{\Delta, \neg A}
         & \vlinf{\bot}{}{\Gamma, \bot}{\Gamma}
         & \vlinf{\top}{}{\Gamma, \top}{}
         & \vlinf{\vlpa}{}{\Gamma, A \vlpa B}{\Gamma, A, B}
         & \vliinf{\vlan}{}{\Gamma, A\vlan B}{\Gamma, A}{\Gamma, B}
    \end{array}
    \]
    \caption{The system $\MALL$ (without fixed points), where $i \in \{0,1\}$.}
    \label{fig:MALL}
\end{figure}

We shall work with extensions of a usual one-sided sequent system for $\MALL$, with sequents as multisets, in \cref{fig:MALL}. From here the basic system for $\muMALL$ formulas is obtained by appropriate extremal fixed point principles:

\begin{definition}
    [System $\muMALL$]
The system $\muMALL$ is the extension of $\MALL$, cf.~\cref{fig:MALL}, by the rules:
\[
\vlinf{\mu}{}{\Gamma , \mu x A(x)}{\Gamma, A(\mu x A(x))}
\qquad
\vliinf{\nu}{}{\Gamma, \nu x A(x)}{\Gamma , B}{\neg B, A(B)}
\]
\end{definition}

\begin{convention}
    [Two-sided sequents]
    We shall systematically use \emph{two-sided} notation, writing $\Gamma \seqar \Delta$ formally for the sequent $\neg \Gamma, \Delta$, where $\neg \Gamma := \{\neg A : A \in \Gamma\}$.
    This is purely so that sequents may be read suggestively, making explicit the stated implication that is formally coded by De Morgan duality.
    In the same vein we may write, say $\lr \mu$ or $\rr \nu$ for the same rule $\nu$, depending on `which side' of the sequent the principal formula is on.
\end{convention}

\begin{remark}
    [(Co)induction principles]
    \label{co-induction-principles}
    The rules above, under duality, indicate that $\mu $ and $\nu $ define least and greatest fixed points, respectively.



In particular, the following \emph{induction} and \emph{coinduction} rules, respectively, are just special cases of the $\nu$-rule, by De Morgan duality:
\[
\vlinf{\ind}{}{\mu x A(x) \seqar B}{A(B)\seqar B}
\qquad
\vlinf{\coind}{}{B \seqar \nu x A(x)}{B \seqar A(B)}
\]
\end{remark}

The basic theory behind $\muMALL$ is well-established; the reader may consult \cite{Baelde-Miller,baelde2012least} for further background.
Several other systems over the language of $\muMALL$ have been proposed in the literature, in particular \emph{non-wellfounded} and \emph{circular} ones \cite{BaeldeDoumaneSaurin2016CSL,BaeldeDoumaneKuperberg2022LICS}. See \cite{das_et_al:LIPIcs.FSCD.2022.20,DasDS23} for a comparison of earlier results comparing these systems at the level of provability and complexity.

\subsection{Wellfounded systems with ordinal branching}
Among the systems considered in \cite{De2022Thesis,DeJafarrahmaniSaurin2022FSTTCS,DasDS23} were wellfounded systems with (some) ordinal branching.
The general investigation of such systems, at large, is the main focus of this work.
We shall follow the naming conventions of \cite{DasDS23}.

\begin{definition}
    [System $\muMALLord \alpha$]
    Fix an ordinal $\alpha$.
    The language of the system $\muMALLord \alpha$
     expands the syntax of $\muMALL$ formulas as follows,
\[
A,B, \dots \quad \bnf \quad
\cdots \ \mid \ \muord \beta x A \ \mid \ \nuord \beta x A
\]
for $\beta \leq \alpha$.
We extend the notation $\neg A$ for formulas $A$ in this extended syntax by setting $\neg {(\muord \beta x A)} := \nuord\beta x \neg A$ and $\neg{(\nuord \beta x A)} := \muord \beta x \neg A$.

We identify $\mu x A$ and $\nu x A$ with $\muord\alpha x A$ and $\nuord \alpha x A $ respectively.
The system $\muMALLord \alpha$ is the extension of $\MALL$ by the rules,
\begin{equation}
    \label{eq:muord-nuord-rules}
    \vlinf{\muord \beta }{\gamma <\beta}{\Gamma, \muord \beta x A(x)}{\Gamma, A(\muord \gamma x A(x))}
\qquad
\vlinf{\nuord \beta}{}{\Gamma, \nuord \beta X A(X)}{ \{ \Gamma, A(\nuord \gamma X A(X)) \}_{\gamma < \beta}  }
\end{equation}
for all $\beta \leq \alpha$.
\end{definition}
The system $\muMALLord\alpha$, in this generality, was first proposed by De in \cite[Section~5.2]{De2022Thesis}, building on earlier work with only $\omega$-branching \cite{Jafarrahmani2021CSL}. 
In particular he gives a soundness and completeness result for certain phase models.
While consideration of these semantics are beyond the scope of this work, we shall make due comparisons to \cite{De2022Thesis} inline.

We assume that $\alpha$ is computable because otherwise it is not clear how to assign G\"odel numbers to formulae of the form $\muord \beta x\,  A(x)$ when considering the provability problem for a logic. 
Once a computable wellorder of the order type $\alpha$ is fixed, one can denote $\beta$ by its notation according to that wellorder.

\begin{remark}
    [Approximant reading]
    Note that the $\nuord \beta$ rules have (possibly) infinitely many premisses, indexed by all $\gamma<\beta$.
    The rules above are consistent with the (informal) interpretation of $\muord \beta x A(x) $ as $ 0 \vlor A(0) \vlor A(A(0)) \cdots A^\beta (0)$ and $\nuord \beta x A(x)$ as $\top \vlan A(\top) \vlan A(A(\top)) \vlan \cdots \vlan A^\beta(\top)$ induced by the \emph{approximant} characterisation of least and greatest fixed points of monotone operators.
    Here the additives, $(0,\top, \vlor, \vlan)$, model the lattice structure.
\end{remark} 
Let us expand on this intuition via a couple examples:

\begin{example}
    [Additive units]
    $\mu x \, x $ and $\nu x\, x $ are actually equivalent to the additive units, $0$ and $\top$ respectively, in any system $\muMALLord\alpha$. 
    By duality, it suffices to show only $\nu x\,  x $ equivalent to $\top$.
    $\nu x\, x \seqar \top$ is already a theorem of $\MALL$ (over the expanded language), by the $\top$ rule.
    To prove $\top \seqar  \nu x\, x $, it suffices to prove $\Gamma, \nuord\beta x \, x $ for all $\beta \leq \alpha$, for which we proceed by induction on $\beta$:
    \[
    \vlinf{\nuord \beta}{}{\Gamma, \nuord \beta x\, x }{
    \left\{
    \vltreeder{\IH}{\Gamma, \nuord \gamma x\, x }{\quad }{}{\quad }
    \right\}_{\gamma<\beta}
    }
    \]
\end{example}

This equivalence holds in other systems over the syntax of $\muMALL $ too (see, e.g., \cite{DasDS23}).
    Note that, here, we may also readily see that $\muord 0 x A$ and $\nuord 0 x A$ are equivalent to $0$ and $\top$ respectively.
However note that the systems $\muMALLord\alpha$ do not, in general, derive the (co)induction principles, cf.~\cref{co-induction-principles}, see \cite[Section~4.2]{Jafarrahmani2021CSL}.

\todo[inline]{Maybe some further examples and comparison with other systems? Can we say more about induction?}

\begin{example}
    [Monotonicity]
    We can show that $\muord \beta x A(x)$ is increasing and $\nuord  \beta x A(x)$ is decreasing in $\beta$.
    Namely for $\gamma \leq \beta$ we have:
    \[
    \vlinf{\lr \mu}{}{\muord \gamma x A(x) \seqar \muord \beta x A(x) }{
    \left\{
    \vlderivation{
    \vlin{\rr \mu}{}{A(\muord \delta x A(x)) \seqar \muord \beta x A(x) }{
    \vlin{\identity}{}{A(\muord \delta x A(x)) \seqar A(\muord \delta x A(x))}{\vlhy{}}
    }
    }
    \right\}_{\delta<\gamma}
    }
    \]
    The case for $\nu$ is identical, by De Morgan duality.
\end{example}



\subsection{Statement of main result}

The main question we ask in this work is about the algorithmic complexity of theoremhood in $\muMALLord \alpha$: is a $\muMALL$ sequent $\Gamma$ provable in $\muMALLord \alpha$?
We restrict this problem to only $\muMALL$ input sequents for two reasons: (i) so that the theorems of all logics are over the same language; and, (ii) to avoid representation issues of the input sequent when ordinal annotations appear (recall that $\mu$ is identified with $\muord \alpha$ in $\muMALLord \alpha$).
Our main result comprises a tight answer to this question for appropriate finitely representable ordinals $\alpha$:



\begin{theorem}\label{theorem:complexity-main}
    $\muMALLord \alpha$ is $\Sigma^0_{\omega^{\alpha^\omega}}$-complete, for computable ordinals $\alpha \ge \omega$.
\end{theorem}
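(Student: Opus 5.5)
The proof splits into an upper bound and a lower bound. Both rest on two foundations: cut elimination and a focussing theorem for $\muMALLord\alpha$, and a notion of formula rank $\rk{\cdot}$ assigning ordinals to (annotated) formulas. The rank should be calibrated so that every closed formula, and every sequent, whose fixed point annotations are at most $\alpha$ has rank below $\alpha^\omega$. Rank should also strictly decrease along the premisses of every cut-free rule. For the key case of unfolding $\muord\beta x A$ into $A(\muord\gamma x A)$ with $\gamma<\beta$, I would set $\rk{\muord\beta x A}$ to be something like $\omega^{\rk{A}}\cdot(1+\beta)$ at each nesting level. The nesting depth $n$ of fixed points then contributes at most a tower $\alpha^{n}$-ish bound, and taking the supremum over $n$ gives $\alpha^\omega$. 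Sequent ranks would be natural sums $\natsum$ of formula ranks.

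\textbf{Upper bound.} By cut elimination it suffices to decide cut-free provability. I would define the provability predicate $\mathrm{Prov}_\beta(\Gamma)$ as ``$\Gamma$ has a cut-free proof'' for annotated sequents of rank $<\beta$, by transfinite recursion on rank. Each rule application reduces rank, so $\mathrm{Prov}$ for a sequent of rank $\rho$ is definable from $\mathrm{Prov}$ restricted to ranks $<\rho$:
\begin{itemize}
\item the $\mu^\beta$ and $\vlor$ cases are existential over finitely or recursively many choices ($\gamma<\beta$ ranges over notations);
\item the $\nuord\beta$ case is a universal quantifier over notations $\gamma<\beta$;
\item the $\vlte$ case quantifies over the finitely many context splittings.
\end{itemize}
Each level of rank thus costs a bounded number of jumps, so $\mathrm{Prov}$ on ranks $<\rho$ is computable in $\emptyset^{(\omega^{\rho})}$, or in $\emptyset^{(\rho)}$ with an appropriate normalisation. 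Taking $\rho<\alpha^\omega$ uniformly, the whole predicate is $\Delta^0_{\omega^{\alpha^\omega}}$-uniformly, hence in $\Sigma^0_{\omega^{\alpha^\omega}}$. The careful point is matching the exponent $\omega^{(\cdot)}$ exactly. To handle this I would phrase the rank so that one unit of rank corresponds to one quantifier alternation, noting that $\omega^{\alpha^\omega}=\alpha^\omega$ when $\alpha\ge\omega$ is suitably closed, and otherwise absorbing the factor.

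\textbf{Lower bound.} I would reduce the set of true computable infinitary $\Sigma_{\lambda}$ sentences (Ash--Knight), complete for $\Sigma^0_{\lambda}$, to provability, with $\lambda=\omega^{\alpha^\omega}$. Finite arithmetic is encoded via the standard $\muMALL$ encoding of two-counter (Minsky) machine reachability through $\mu$, which gives $\Sigma^0_1$ atoms. A computable infinitary conjunction $\bigwedge_{i} \varphi_i$ is encoded using $\nu$ with a counter/key formula selecting the index $i$. Disjunction is encoded dually with $\mu$. To reach transfinite depth $\lambda$, I would nest fixed points so that the approximant index $\gamma<\beta$ chosen in a $\nuord\beta$/$\muord\beta$ rule codes the ordinal level of the subformula. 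Nesting depth $n$ then realises levels below $\omega^{\alpha^n}$, and a uniform family over $n$ gives the full level.

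Adequacy is the main obstacle. Soundness, showing that truth implies provability, is routine by induction on the sentence. Completeness, showing that provability implies truth, needs the focussing theorem: focussed cut-free proofs of the encoding sequents must follow exactly the intended game, with no interleaving of other unfoldings. I expect the hardest part to be engineering formulas whose ordinal approximant behaviour reaches exactly $\omega^{\alpha^n}$ levels while staying inside pure $\muMALL$ syntax. I would first try iterated ``ordinal-counting'' formulas of the shape $\mu x(1\vlor \nu y(x\vlte y))$ nested $n$ times, and check their ranks against the upper bound.
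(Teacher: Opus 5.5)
\textbf{Main gap: the ordinal calibration.} Your architecture matches the paper's: cut elimination and focussing, a rank that strictly decreases bottom-up with sequent rank a natural sum, a rank-indexed inductive definition of cut-free provability for membership, and Ash--Knight infinitary sentences plus Minsky machines for hardness. But the ordinal calibration, which is the actual content of the theorem, is wrong. No rank that strictly decreases along cut-free rules can keep all sequents below $\alpha^\omega$. If it did, your own membership argument (boundedly many jumps per rank level, up to the limit $\alpha^\omega$) would put provability in $\Sigma^0_{\alpha^\omega}$, strictly below the level you want to prove hard. It would also contradict your later claim that nesting depth $n$ realises levels below $\omega^{\alpha^n}$.

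Concretely, your candidate $\rk{\muord\beta x A}=\omega^{\rk A}\cdot(1+\beta)$ already fails for $A=x\vlpa x$. The $\vlpa$ rule forces $\rk{B\vlpa C}>\rk B\natsum\rk C$, so the premiss $\muord\gamma xA\vlpa\muord\gamma xA$ of a $\muord{\gamma+1}$ step has rank exceeding $\omega^{\rk A}(1+\gamma)\natsum\omega^{\rk A}(1+\gamma)\ge\omega^{\rk A}(1+\gamma+1)$. Repeated occurrences of the bound variable make ranks grow exponentially in $\beta$. The paper's $\valuation{\eta^\beta x.B}{s}=\sup_{\gamma<\beta}\{\valuation{B}{s,x\mapsto\valuation{\eta^\gamma x.B}{s}}+1\}$ is bounded by the iterated natural product $(\rk C+1)^{\natprod\beta}$. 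The identity $(\omega^{\omega^\delta})^{\natprod\beta}=(\omega^{\omega^\delta})^\beta$ then gives ranks below $\omega^{\omega^{(\deg\alpha)m}}$, for finite $m$ growing with nesting depth. Hence $\rank<\omega^{\alpha^\omega}$, and this is tight by the formulas $\Resource_n$.

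Your patch ``$\omega^{\alpha^\omega}=\alpha^\omega$ for suitably closed $\alpha$'' is false for every $\alpha\ge\omega$, since $\alpha^\omega=\omega^{(\deg\alpha)\omega}$ is never an epsilon number. Likewise ``computable in $\emptyset^{(\omega^\rho)}$'' does not follow from bounded jumps per level. With the correct bound your membership sketch does go through. The paper uses the Kuznetsov--Speranski theorem on positive inductive definitions, plus a dual operator $\overline{\mathcal D}$ to make the complement c.e.\ in $H(b)$ as well.

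\textbf{Hardness: the construction is left open.} You also place the difficulty in the wrong direction. Since inputs are pure $\muMALL$ sequents, the ordinal level of a sub-sentence cannot be written into annotations. It has to be measured by the rank of a resource that proof search is forced to consume. The paper uses the sequent $\hat\Resource_{n+1},\VARsym_0^u,\VARkey_1$, where each resource component is guarded by $\lock(\cdot)=\FORPass\vlor(\neg{\VARkey_1}\vlte\cdot)$ and interleaved with copies of $\FORInd$. The key output by one evaluation step therefore unlocks exactly one resource component, releasing a fresh $\FORInd$ together with a strictly smaller resource. Provability$\Rightarrow$truth is then an induction on resource rank, with focussing and the locks pinning down proof shapes.

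Truth$\Rightarrow$provability, which you call routine, is exactly where the quantitative bound enters. It needs $\rk{\Resource_{n+1}}\ge\omega^{\omega^{(\deg\alpha)n}}>\ordO a$. When all resource components have limit rank, it also needs continuity of natural sum ($\delta_1\natsum\delta_2=\sup_{\delta<\delta_1}(\delta\natsum\delta_2)$ for $0<\ld(\delta_1)\le\ld(\delta_2)$) to pick approximants $\gamma$ that keep the residual rank above the level of the chosen disjunct or conjunct. ``The approximant index codes the ordinal level'' and the untested candidate $\mu x(1\vlor\nu y(x\vlte y))$ supply neither the coupling mechanism nor the rank lower bound. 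So hardness at $\omega^{\alpha^\omega}$ is not yet established.
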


The complexity class $\Sigma^0_{\omega^{\alpha^\omega}}$ is a level of the \emph{hyperarithmetical hierarchy}---a classification of $\Delta^1_1$-sets extending the arithmetical hierarchy. 
One can obtain a complete set at the $n$\textsuperscript{th} level of the arithmetical hierarchy by $n$-fold application of the Turing jump to $\emptyset$. 
Likewise, a complete set at the $\beta$\textsuperscript{th} level of the hyperarithmetical hierarchy is obtained by iterating the Turing jump $\beta$ times, taking \emph{effective unions} at limit stages. 
The hyperarithmetical hierarchy is construed under \emph{Turing} reductions, not many-one reductions. This is explained further in the sequel.

The remainder of this paper is devoted to proving \cref{theorem:complexity-main}, along with some necessary proof theoretic foundations.
Before that, let us recall sufficient effective descriptive set theory for the reader to make sense of the theorem statement above, not least to make precise our notion of \emph{computable reduction} therein.

\subsection{Representing recursive ordinals}


The \emph{hyperarithmetical hierarchy} is an effective version of the Borel hierarchy of $\Delta^1_1$-sets. 
It is usually defined using Kleene's $\KleeneO$, a system of notations for all recursive ordinals \cite{Kleene38}. In this section we recall these definitions, but point the reader to textbooks such as \cite{AshK2000,Rogers67} for a more detailed account. 
The reader familiar with the hyperarithmetical hierarchy may safely skip this subsection, only taking note of \cref{dfn:hyperarith-hier,dfn:hyperarith-completeness-hardness}.

\subsubsection{Computable ordinals and Kleene's $\KleeneO$}
Fix some enumeration of partial computable one-variable functions that use a set $X \subseteq \omega$ as an oracle; let $\varphi_e^X$ denote the $e$-th such function. We denote $\varphi_e^\emptyset$ by $\varphi_e$. A set $A \subseteq \omega$ is \textbf{computable} in $B \subseteq \omega$ if there is $e \in \omega$ such that
\[
\varphi_e^B(n) = 
\begin{cases}
    1, & n \in A \\
    0, & n \notin A
\end{cases}
\]
If $A$ is computable in $B$, we also say that $A$ is \textbf{Turing-reducible to $B$} and denote this by $A \le_\Tred B$. If $A \le_\Tred B$ and $B \le_\Tred A$, we say that $A$ and $B$ are Turing-equivalent, $A \equiv_\Tred B$.

$A$ is \textbf{computably enumerable} in $B$ if there is $e \in \omega$ such that
\[
\varphi_e^B(n) = 
\begin{cases}
    1, & n \in A \\
    \text{undefined}, & n \notin A
\end{cases}
\]
A folklore fact is that $A$ is computable in $B$ iff $A$ and $\omega \setminus A$ are computably enumerable in $B$ \cite{Rogers67}.

\begin{definition}
[Ordinal notations]
    Define a partial function $\ordO{\cdot}$ mapping certain natural numbers (called \emph{notations}) to ordinals and a strict well partial order $\lessO$ on notations by transfinite recursion as follows:
    \begin{enumerate}
        \item The only notation for $0$ is $1$, i.e.~$\ordO{1} = 0$. 
        \item The notations for $\alpha+1$ are those of the form $2^a$ where $a$ is a notation for $\alpha$, i.e.~$\ordO{2^a} = \ordO{a}+1$. For any $b$, $b \lessO 2^a$ iff $b \lessO a$ or $b=a$.
        \item Let $\alpha$ be limit and $e$ be the index of a total computable function such that $\varphi_e(0) \lessO \varphi_e(1) \lessO \ldots$ and $\sup\limits_{n \in \omega} \ordO{\varphi_e(n)} = \alpha$. Then, $3 \cdot 5^e$ is a notation for $\alpha$, i.e.~$\ordO{3\cdot 5^e} = \alpha$. For any $b$, $b \lessO 3 \cdot 5^e$ iff $b \lessO \varphi_e(n)$ for some $n \in \omega$.
    \end{enumerate}
    Finally, \textbf{Kleene's $\KleeneO$} is the domain of $\ordO{\cdot}$ as defined above.
\end{definition}

\begin{example}
    A finite ordinal $n \in \omega$ receives the unique notation $2 \uparrow\uparrow n = \underbrace{2^{2^{\ldots^2}}}_{n~\text{times}}$. The ordinal $\omega$ is the least ordinal with infinitely many notations. For example, if $e_1$ is an index of a function $\varphi_{e_1}(n) = 2 \uparrow\uparrow n$, then $3 \cdot 5^{e_1}$ is a notation for $\omega$; but also if $\varphi_{e_2}(n) = 2 \uparrow\uparrow (2n)$, then $3 \cdot 5^{e_2}$ is a notation for $\omega$ as well. Then, both $2^{3 \cdot 5^{e_1}}$ and $2^{3 \cdot 5^{e_2}}$ are notations for $\omega+1$. Thus, the tree of notations looks as follows, branching at each limit step:
    \begin{center}
        \tikz[baseline=.1ex]{
            \def\VLEN{1mm}
            \def\HLEN{5mm}
            \def\superscripts{-1.1cm}
            \node[rectangle] (Not0) at (0,0) {1};
            \node[rectangle,right=\HLEN of Not0] (Not1) {$2$};
            \node[rectangle,right=\HLEN of Not1] (Not2) {$4$};
            \node[rectangle,right=\HLEN of Not2] (Not3) {$16$};
            \node[rectangle,right=\HLEN of Not3] (Not4) {$65536$};
            \node[rectangle,right=\HLEN of Not4] (Not5) {$\cdots$};
            \foreach \i in {0,...,4}
            {
                \pgfmathtruncatemacro\Si{\i+1}
                \draw[->] (Not\i.east) -- (Not\Si.west) ;
            }
            \node[rectangle,above right=\VLEN and \HLEN of Not5] (OmegaA) {$3\cdot 5^{e_1}$};
            \node[rectangle,right=\HLEN of Not5] (OmegaB) {$3\cdot 5^{e_2}$};
            \node[rectangle,below right=0mm and 0mm of Not5] (OmegaC) {$\vdots$};
            \node[rectangle,right=\HLEN of OmegaA] (OmegaA1) {$2^{3\cdot 5^{e_1}}$};
            \node[rectangle,right=\HLEN of OmegaB] (OmegaB1) {$2^{3\cdot 5^{e_2}}$};
            \node[rectangle,right=\HLEN of OmegaA1] (OmegaA2) {$\cdots$};
            \node[rectangle,right=\HLEN of OmegaB1] (OmegaB2) {$\cdots$};
            \draw[->] (Not5.east) -- (OmegaA.west) ;
            \draw[->] (Not5.east) -- (OmegaB.west) ;
            \draw[->] (OmegaA.east) -- (OmegaA1.west) ;
            \draw[->] (OmegaB.east) -- (OmegaB1.west) ;
            \draw[->] (OmegaA1.east) -- (OmegaA2.west) ;
            \draw[->] (OmegaB1.east) -- (OmegaB2.west) ;
            \foreach \i in {0,...,4}
            {
            \path let \p1 = (Not\i) in node  at (\x1,\superscripts) {\color{gray} $\i$};
            }
            \path let \p1 = (OmegaA) in node  at (\x1,\superscripts) {\color{gray} $\omega$};
            \path let \p1 = (OmegaA1) in node  at (\x1,\superscripts) {\color{gray} $\omega+1$};
        }
    \end{center}
    The particular choice of the numbers $2,3,5$ in the definition of Kleene's $\KleeneO $ is not important, they could be replaced by any other distinct prime numbers or other appropriate coding mechanism.
\end{example}

\subsubsection{Hyperarithmetical hierarchy}

The complexity result of \cref{theorem:complexity-main} places us in the realm of the \emph{hyperarithmetical hierarchy}, the transfinite extension of arithmetical hierarchy, which provides a fine-grained classification of $\Delta^1_1$-sets. 

\begin{definition}
[Turing jumps]
    For $A \subseteq \omega$, the \textbf{Turing jump} of $A$ is the set:
    \[
        A^\prime = \{n \in \omega \mid \varphi_n^A(n)~\text{is defined}\}.
    \]
\end{definition}

\begin{definition}
[Hyperarithmetical hierarchy]
\label{dfn:hyperarith-hier}
For each $a \in \KleeneO$, we define the set $H(a) \subseteq \omega$ by induction on $\lessO$ as follows.
\begin{enumerate}
    \item $H(1) \df \emptyset$;
    \item $H(2^a) \df (H(a))^\prime$;
    \item $H(3\cdot5^e) = \{\pair{b}{n} \mid b \lessO 3\cdot 5^e,\ n \in H(b)\}$.
\end{enumerate}
Let $\Sigma^0_0 = \Pi^0_0$ be the collection of all computable subsets of $\omega$. For each non-zero ordinal $\alpha$, we set:
\[
\begin{array}{r@{\ \df \ }l}
     \Sigma^0_{\alpha} & {\left\{
S \subseteq \omega \mid \exists a\in \KleeneO\ \ordO{a} = \alpha,\
S \le_{\Tred} H \left( a \right)
\right\}} \\
  \Pi^0_{\alpha}   & {\left\{
\omega \setminus S \mid
S \in \Sigma^0_\alpha
\right\}} 
\end{array}
\]
\end{definition}
An important caveat is that $\Sigma^0_\alpha$ is defined via Turing reducibility $\le_{\Tred}$. If it was defined via many-one reducibility $\le_{\mred}$, then, in general, there would be no complete set in $\Sigma^0_\alpha$: as proved in \cite{Moschovakis66}, if $\alpha$ is limit and is not of the form $\beta+\omega$, then, for each $a$ such that $\ordO{a}=\alpha$, there is $b$ such that $\ordO{b}=\alpha$ and $H(a) <_{\mred} H(b)$. For Turing reducibility, however, this problem does not arise thanks to Spector's theorem \cite[Corollary 5.5]{AshK2000}: if $\ordO{a}=\ordO{b}$, then $H(a) \equiv_{\Tred} H(b)$. 
\begin{definition}
\label{dfn:hyperarith-completeness-hardness}
    A set $A$ is \textbf{$\Sigma^0_\alpha$-complete} (or \textbf{$\Sigma^0_\alpha$-hard}) if $A \equiv_{\Tred} H(a)$ ($A \le_{\Tred} H(a)$ respectively) for some/any $a$ such that $\ordO{a}=\alpha$.    
\end{definition}

\section{Proof theoretic foundations}
\label{sec:pt}
Our main result relies on two fundamental proof theoretic properties of our systems $\muMALLord\alpha$:
\begin{itemize}
    \item For the upper bound, we must show that the restriction of proof search to only non-$\cut$ steps suffices, cf.~\cref{prf-srch-terminates}, by way of a cut-elimination theorem.
   \item For the lower bound we must further rely on a normal form of proofs in order to narrow proof search appropriately, by way of a focussing theorem.
\end{itemize}

Both of these are interesting in their own right, exemplifying the robustness of $\muMALLord\alpha$'s proof theoretic foundations.
This section is devoted to proving both of them.

Let us point out that the soundness and completeness for $\muMALLord\alpha$ for phase semantics in \cite{De2022Thesis} also implies a cut-admissibility result, but this does not imply the convergence of cut-reductions, as we provide here.

\begin{remark}
[Formula ancestry]
Throughout this section we shall employ standard terminology for identifying formula occurrences in inference steps and proofs (see \cite[Section~1.2.3]{Buss1998}).
In particular the \textbf{principal} formula of a logical step is the distinguished formula in its lower sequent, as typeset in \cref{fig:MALL,eq:muord-nuord-rules}.
\textbf{Auxiliary formulas} are any distinguished formulas in any upper sequents.
In an inference step an \textbf{immediate ancestor} of a formula occurrence $A$ in the lower sequent is either an auxiliary formula (if $A$ is principal), or the corresponding occurrence of $A$ in an upper sequent (if $A$ is not principal).
A \textbf{direct ancestor} of a formula occurrence in a proof is a higher occurrence of the same formula connected by a path in the graph of immediate ancestry.\footnote{By the forthcoming notion of formula rank, such a path can never be principal, except perhaps at its upper end point.}
An \textbf{origin} of a formula occurrence is a direct ancestor that is principal.
\end{remark}

\subsection{On termination of cut-free proof search}

$\MALL $, without $\cut$, is the prototypical example of a terminating proof system: bottom-up, continual application of the non-$\cut$ proof rules eventually terminates, as the number of connectives in a sequent is strictly decreasing.
It turns out we can extend this property to our infinitary systems too:

\begin{proposition}
    [Ranks, Theorem~5.2.1 in \cite{De2022Thesis}]
    \label{ranks-exist}
    There is a map $\rank $ from formulas to ordinals s.t.:
    \begin{itemize}
        \item $\rk {A_i} < \rk {A_0 \star A_1}$ for any binary connective $\star$.
        \item $\rk {A(\sigma^\gamma x A(x))} < \rk {\eta^\beta x A(x)} $ for $\eta \in \{\mu,\nu\}$ and $\gamma<\beta$.
    \end{itemize}
\end{proposition}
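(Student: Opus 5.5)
We construct $\rank$ by structural recursion on formulas, but with the fixed point clause handled in a way that does not depend on the (possibly larger) substituted formula $\sigma^\gamma x A(x)$. The key device is to assign ranks to formulas with free variables relative to an assignment of ordinals to those variables, and to show that substitution is monotone in that assignment.

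\textbf{Definition.} Let $\rho$ map the free variables of $A$ to ordinals, and define $\rk{A}_\rho$ as follows.
\begin{itemize}
\item Atoms, units and literals get rank $0$, and $\rk{x}_\rho := \rho(x)$.
\item $\rk{A_0 \star A_1}_\rho := \max(\rk{A_0}_\rho, \rk{A_1}_\rho) + 1$.
\item $\rk{\eta^\beta x A}_\rho := \sup_{\gamma<\beta} \bigl(\rk{A}_{\rho[x \mapsto r_\gamma]} + 1\bigr)$, where $r_\gamma := \rk{\eta^\gamma x A}_\rho$ is defined by an inner transfinite recursion on $\gamma$. If $\beta=0$ the rank is $0$.
\end{itemize}
This is well defined by recursion on the syntactic structure of $A$ (outer) and on $\beta$ (inner). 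For closed formulas we set $\rk{A} := \rk{A}_\emptyset$. Note that $\eta,\sigma$ range independently over $\{\mu,\nu\}$, as in the statement.

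\textbf{Lemma 1 (monotonicity).} If $\rho \le \rho'$ pointwise, then $\rk{A}_\rho \le \rk{A}_{\rho'}$. This follows by induction on $A$. In the fixed point clause we use an inner induction on $\gamma$ to get $r_\gamma \le r'_\gamma$, and then apply the outer induction hypothesis for $A$.

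\textbf{Lemma 2 (substitution).} We have $\rk{A(C)}_\rho = \rk{A}_{\rho[x\mapsto \rk{C}_\rho]}$, for $C$ free for $x$. This is again by induction on $A$. The fixed point case reduces to the claim that the inner sequences coincide, which holds by an inner induction on $\gamma$.

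\textbf{First bullet.} This is immediate from the binary connective clause.

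\textbf{Second bullet.} We must show $\rk{A(\sigma^\gamma x A)} < \rk{\eta^\beta x A}$ for $\gamma<\beta$. By Lemma 2,
\[
\rk{A(\sigma^\gamma x A)} = \rk{A}_{[x\mapsto \rk{\sigma^\gamma x A}]}.
\]
The main obstacle is that $\sigma$ may differ from $\eta$. I handle this by observing that the definition of the fixed point clause is literally the same for $\mu$ and $\nu$. Hence $\rk{\sigma^\gamma xA} = \rk{\eta^\gamma xA} = r_\gamma$, which can be verified by induction on $\gamma$. Therefore
\[
\rk{A(\sigma^\gamma x A)} = \rk{A}_{[x\mapsto r_\gamma]} < \rk{A}_{[x\mapsto r_\gamma]} + 1 \le \rk{\eta^\beta x A}.
\]
Lemma 1 is not even needed for this step. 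It is, however, useful later, to see that ranks are increasing in $\beta$.

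All ranks are ordinals, since every sup is taken over a set indexed by $\gamma<\beta\le\alpha$. The only delicate point is getting the two nested recursions in the right order: outer on the structure of $A$, inner on ordinal annotations. Substitutions never appear inside the definition itself, only in the lemmas, so the definition does not become circular.
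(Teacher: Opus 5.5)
Your proof is correct, and its skeleton matches the paper's. You relativise ranks to a valuation of the variables, you define the fixed-point clause as a supremum over the approximants $\eta^\gamma x A$ computed by an inner recursion on $\gamma$, you prove a substitution lemma (item 2 of \cref{lemma:ranks-upper-bound}), and you read the unfolding inequality off the supremum, as in the proof of \cref{proposition:ranks}. The genuine difference is your binary clause $\max(\cdot,\cdot)+1$. This is the rank of \cite{AlberucciKS14}, which the paper deliberately replaces by $\valuation{A_1 \circ A_2}{s} = 1 \natsum \valuation{A_1}{s} \natsum \valuation{A_2}{s}$.

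For this proposition your choice is perfectly adequate and a little simpler. The same holds for what depends on it: termination via the multiset ordering (\cref{prf-srch-terminates}), cut-elimination and focussing.

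What the natural sum buys is the sequent-level \cref{proposition:ranks}: the natural sum of the ranks of a sequent strictly decreases across every cut-free step. This fails for your rank at $\vlpa$. Take $\alpha \ge \omega$ and $A = B = \mu^\omega x.(x \vlpa p)$, both of rank $\omega$ under your definition. Then $A \vlpa B$ has rank $\omega+1 < \omega \natsum \omega$. That sequent measure is what makes $\rank(\Gamma)$ bound the height of cut-free proof search in \cref{sec:upper-bd}. The tight value $\omega^{\alpha^\omega}$ obtained in \cref{lemma:ranks-upper-bound,lemma:rho-ranks-lower-bound} is also specific to the natural-sum definition.

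Two minor points. First, you define $\rank$ only on closed formulas, whereas the paper also inducts on ranks of open ones (e.g.\ on $\rank(A(x))$ in \cref{functors-and-eta}). Fix a default valuation, as the paper does with the constant valuation $1$; nothing else in your argument changes. Second, the $\sigma$ in the statement is presumably just $\eta$, but your observation that the fixed-point clause does not depend on $\mu$ versus $\nu$ handles either reading.
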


As we mentioned before we shall ultimately need a notion of rank that tightly bounds the height of proof search for our main complexity results.
To this end we will define in \cref{sec:ranks} a new more refined notion of rank than earlier ones, e.g.\ those from \cite{AlberucciKS14,De2022Thesis}.
Before that, 
let us state a desired consequence that nonetheless suffices for our proof theoretic applications and prerequisite.

\begin{corollary}
\label{prf-srch-terminates}
    Cut-free proof search is terminating.
\end{corollary}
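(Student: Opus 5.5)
We use the rank map of \cref{ranks-exist} to assign to each sequent an ordinal measure that strictly decreases along every branch of bottom-up cut-free proof search. Since ordinals are wellfounded, every branch is then finite. Because rules such as $\nuord\beta$ branch transfinitely, the search space is an infinitely branching but wellfounded tree; this is what ``terminating'' means here.

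For a sequent $\Gamma = A_1, \dots, A_n$, let its measure $m(\Gamma)$ be the natural (Hessenberg) sum $\rk{A_1} \natsum \cdots \natsum \rk{A_n}$. Equivalently, take the finite multiset $\{\rk{A_1},\dots,\rk{A_n}\}$ under the Dershowitz--Manna multiset ordering over ordinals, which is wellfounded. The natural sum is preferable to the ordinary ordinal sum because it is commutative and strictly monotone in each argument, so the measure is independent of how the multiset is listed.

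We then check, rule by rule among the non-$\cut$ rules, that every premiss has strictly smaller measure than the conclusion. The axioms $\identity$, $1$ and $\top$ have no premisses. For $\bot$ a formula is simply deleted. For $\vlor$ the principal formula $A_0\vlor A_1$ is replaced by $A_i$, of smaller rank by the first clause of \cref{ranks-exist}. For $\vlan$ each premiss replaces the principal formula by one conjunct. For $\vlpa$ the formula $A\vlpa B$ is replaced by both $A$ and $B$. Here the multiset ordering handles this directly, since one element is replaced by finitely many strictly smaller ones. With natural sums we need $\rk A \natsum \rk B < \rk{A\vlpa B}$, which the bare statement of \cref{ranks-exist} does not give, so the multiset formulation is the safer choice. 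For $\vlte$ the context $\Gamma,\Delta$ is split between the two premisses, and each premiss contains a sub-multiset of the context together with one strictly smaller formula. For $\muord\beta$ and $\nuord\beta$ the principal formula $\eta^\beta x A(x)$ is replaced by $A(\eta^\gamma x A(x))$ with $\gamma<\beta$, which is smaller by the second clause of \cref{ranks-exist}. This holds uniformly for all the transfinitely many premisses of $\nuord\beta$.

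It follows that the tree of all bottom-up cut-free search paths has no infinite branch, because an infinite branch would yield an infinite descending sequence of multisets of ordinals. The only delicate point is the multiplicative $\vlpa$ case, where a single formula becomes two, which is why we measure by multisets rather than plain sums. The substantive work, constructing ranks with the required properties, is already supplied by \cref{ranks-exist}.
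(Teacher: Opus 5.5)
Your proof is correct and is essentially the paper's own argument: every cut-free step, read bottom-up, removes the principal formula, possibly replacing it by finitely many formulas of strictly smaller rank by \cref{ranks-exist} (and, for $\vlte$, also dropping part of the context). Hence the Dershowitz--Manna multiset ordering of ranks strictly decreases along every branch. One small quibble is that the natural sum and the multiset ordering are not ``equivalent'' measures, as your own $\vlpa$ case shows; the paper only obtains the natural-sum version later, in \cref{proposition:ranks}, using the concrete clause $\valuation{A_1 \circ A_2}{s} = 1 \natsum \valuation{A_1}{s} \natsum \valuation{A_2}{s}$ of \cref{definition:rank}.
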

\begin{proof}
[Proof sketch]
    Any inference step, bottom-up, replaces a formula by only ones of smaller rank, thus strictly decreasing the multiset of ranks (see, e.g., \cite[Section~2]{dershowitz1979proving} for an account of multiset orderings).
\end{proof}

Taking the multiset measure above will turn out to be overkill but, as we have already said, \cref{prf-srch-terminates} above already suffices in order to establish our precursory proof theoretic results, cut-elimination and focussing.
Before that, let us see some simpler examples of the proof theoretic behaviour of $\muMALLord \alpha$.

In what follows we shall refer to the \textbf{rank} of a formula $A$ to mean $\rank (A)$, cf.~\cref{ranks-exist}.
We assume only the properties stated in the two results above, \cref{ranks-exist,prf-srch-terminates}, for now.
Note in particular that induction on ranks, for $\MALL$ formulas, subsumes induction on their size.

\begin{example}
    [Functors and $\eta$-expansion, cf.~Theorem~5.2.3 in \cite{De2022Thesis}]
    \label{functors-and-eta}
    One useful application of proof search termination is the justification of our notion of duality.
    For this we shall prove functoriality of formula contexts, namely the admissiblity of the following rules:
    \[
    \vlinf{A(\cdot)}{}{A(B) \seqar A(B')}{B\seqar B'}
    \]
    We proceed by induction on $\rk {A(x)}$.
    The propositional cases are just like in $\MALL$, so the critical cases are the fixed points. For the $\nu$ case we have,
    \[
    \vlinf{\rr \nu}{}{\nuord \beta x A(x,B) \seqar \nuord \beta x A(x,B')}{
    \left\{
    \vlderivation{
    \vlin{\lr \nu}{}{\nuord \beta xA(x,B) \seqar A(\nuord \gamma x A(x,B'),B')}{
    \vlin{\IH}{}{A(\nuord \gamma x A(x,B),B) \seqar A(\nuord \gamma x A(x,B'),B')}{\vlhy{B \seqar B'}}
    }
    }
    \right\}_{\gamma<\beta}
    }
    \]
    where $\IH$ is obtained by the inductive hypothesis.
    Note that the $\mu$-case is identical, by De Morgan duality.
This argument not use $\cut$ and uses only atomic identity steps, further giving a reduction from general identity steps to atomic ones.

This construction is quite different from how analogous results are obtained for $\muMALL$, in both the (co)inductive \cite{Baelde-Miller,baelde2012least} and non-wellfounded \cite{BaeldeDoumaneSaurin2016CSL} settings, where we must strengthen the statement to account for multiple arguments $B$.
\end{example}

\subsection{Cut-elimination}
Let us write $\muMALLordcf \alpha$ for the system $\muMALLord \alpha$ without the $\cut $ rule.
The main result of this subsection is:
\begin{theorem}
    [Cut-elimination]
    \label{cut-elim-mumall}
    $\muMALLord\alpha  \proves \Gamma \implies \muMALLordcf\alpha \proves \Gamma$.
\end{theorem}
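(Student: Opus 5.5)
We argue by a standard Gentzen-style cut-elimination, adapted to the infinitary wellfounded setting. Since proofs in $\muMALLord\alpha$ are wellfounded trees, possibly infinitely branching, it suffices to prove the \emph{cut-admissibility} lemma: if $\muMALLordcf\alpha \proves \Gamma, A$ and $\muMALLordcf\alpha \proves \Delta, \neg A$ then $\muMALLordcf\alpha \proves \Gamma, \Delta$. The theorem then follows by induction on the (ordinal) height of a $\muMALLord\alpha$ proof. At each $\cut$ step whose premisses have already been made cut-free by the inductive hypothesis, we apply the lemma; all other steps are simply reassembled.

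We prove the lemma by a lexicographic induction on the pair $(\rk A, h_1 \natsum h_2)$. Here $h_1,h_2$ are the ordinal heights of the two cut-free derivations and $\natsum$ is the natural (Hessenberg) sum, chosen so that decreasing either height strictly decreases the sum. We use the rank function of \cref{ranks-exist}, noting that $\rk{\neg A} = \rk A$ may be assumed by choosing a rank symmetric under duality. This can be arranged from any rank function by taking the maximum of the two values, since the defining properties are closed under duality.

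The case analysis is the usual one.
\begin{itemize}
\item If $A$ is not principal in the last step of one of the derivations, we permute the cut upwards into all premisses of that step, using the inductive hypothesis at the same rank and smaller height sum. This includes the infinitely branching $\nuord\beta$ step acting on a side formula: we cut into each of its premisses $\{\Gamma', A(\nuord\gamma xA)\}_{\gamma<\beta}$ and reapply $\nuord\beta$.
\item The axiom case $\identity$ is immediate.
\item The $\top$ case absorbs the other derivation, since $\top$ has no premisses and the conclusion is again a $\top$ axiom on the new context.
\item The principal $\MALL$ key cases ($\vlte/\vlpa$, $\vlor/\vlan$, $1/\bot$) reduce to cuts on immediate subformulas, which have strictly smaller rank by \cref{ranks-exist}.
\end{itemize}

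The only genuinely new key case is $\muord\beta$ against $\nuord\beta$. One side ends with $\Gamma, \muord\beta x A(x)$ derived from $\Gamma, A(\muord\gamma xA(x))$ for some $\gamma<\beta$. The other side ends with $\Delta, \nuord\beta x\neg A(x)$ derived from the family $\{\Delta, \neg A(\nuord\delta x \neg A(x))\}_{\delta<\beta}$. We select the premiss with $\delta=\gamma$ and cut it against $\Gamma, A(\muord\gamma xA(x))$. The cut formula $A(\muord\gamma xA)$ has strictly smaller rank than $\muord\beta xA$ by the second clause of \cref{ranks-exist}, so the inductive hypothesis applies.

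We expect the main obstacle to be this fixed-point key case together with the bookkeeping of the induction. One must check that substituting the approximant, and dualising it, is compatible with the rank measure. Here we are relying on rank being a measure on the formula $A(\sigma^\gamma xA)$, not on its size, which may grow. The commutative $\nuord\beta$ case also needs care: cut-elimination must be applied uniformly to infinitely many premisses. This is unproblematic because we only use an (ordinal) induction hypothesis, with no finiteness or choice issues beyond selecting the given subderivations. Convergence of the reduction itself, rather than mere admissibility, follows by turning this induction into a reduction strategy in which each step decreases the same ordinal measure. We also rely on \cref{prf-srch-terminates} implicitly: it ensures that cut-free derivations have heights bounded by ordinals, so that heights are well-defined ordinals and the measure is wellfounded.
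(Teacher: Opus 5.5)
Your proof is correct, but it takes a different route from the paper's.

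\textbf{Your route.} You run the textbook Gentzen argument. Cut admissibility over cut-free premisses is proved by lexicographic induction on $(\rk A, h_1 \natsum h_2)$. It has explicit commutative cases, including pushing a cut through an infinitely branching $\nuord\beta$ step on a side formula, and explicit key cases.

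\textbf{The paper's route.} The paper uses no proof heights and no local permutations. It has three steps.
\begin{enumerate}
\item It proves invertibility of the negative rules inside the bounded-cut systems $\muMALLordcut\alpha\delta$ (\cref{neg-rules-invertible}), by replacing direct ancestors of the negative formula.
\item It shows that $\muMALLordcut\alpha\delta$ is closed under cuts of rank $\delta$ (\cref{d-prfs-closed-under-d-cuts}). Every direct ancestor of the (without loss of generality positive) cut formula $A$ in $\pi_0$ is replaced by the context $\Delta$. Repairs happen only at the origins of $A$, where the auxiliary formula is cut, at rank $<\delta$, against the inverted $\pi_1$. For a $\muord\beta$ origin with witness $\gamma$, the cut is against the $\gamma$-th inversion of $\nuord\beta$, which is exactly your key case.
\item An induction on $\delta$ with a sub-induction on the proof finishes the argument (\cref{d-cuts-to-cf}).
\end{enumerate}

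\textbf{What each buys.} The paper's big-step method absorbs all commutative cases into the ancestor replacement. It therefore never needs ordinal heights, natural sums, or separate handling of infinitely many premisses, and it works directly on proofs that still contain smaller cuts. Your version needs no invertibility lemma and is closer to a local reduction system. The price is the case bookkeeping and the lexicographic measure.

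\textbf{Minor points.}
\begin{itemize}
\item Your symmetrisation of rank by $\max$ is sound but unnecessary for the concrete rank of \cref{definition:rank}. That rank treats dual connectives identically, so it is already invariant under duality; the paper tacitly relies on this when it speaks of the rank of a cut.
\item You do not need \cref{prf-srch-terminates} to have ordinal heights: every wellfounded, possibly infinitely branching, tree has one.
\item Your closing remark about convergence of a reduction strategy is not argued. It is also not needed, since the theorem only asserts that the cut-free system proves the same sequents.
\end{itemize}
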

The proof is similar to the usual one for $\MALL$, only using induction on ranks rather than usual formula size.
The important new case is that the following key cut,
\[
\vlderivation{
\vliin{\cut}{}{\Gamma, \Delta}{
    \vlin{\mu}{\delta<\beta}{\Gamma, \mu^\beta X A(X) }{\vlhy{\Gamma, A(\mu^\delta X A(X))}}
}{
    \vlin{\nu}{}{\Delta, \nu^\beta \neg A(X)}{\vlhy{\{\Delta, \neg A (\nu^\gamma X \neg A(X))\}_{\gamma < \beta} }}
}
}
\]
is transformed to:
\[
\vliinf{\cut}{}{\Gamma, \Delta}{\Gamma, A(\mu^\delta X A(X))}{\Delta, \neg A(\nu^\delta X \neg A(X))}
\]
Note that the ranks of the premisses of the cut have strictly decreased here, by \cref{ranks-exist}.
Since cut-elimination proofs for $\MALL$, its extensions, and similar terminating systems are prevalent in the literature, we shall employ a rather heavy handed `big step' approach, in order to achieve the result as quickly as possible.

It will be useful to consider the systems $\muMALLordcut \alpha \delta$ allowing cuts only on formulas of bounded rank.
\begin{definition}
    [Bounded cut systems]
    The rank of a $\cut$ step is the rank of its cut formula.
    The system $\muMALLordcut \alpha \delta $ is the restriction of $\muMALLord \alpha $ to only cuts of rank $<\delta$.
\end{definition}

Note that the cut-free system $\muMALLordcf\alpha$ is now just $\muMALLordcut \alpha 0$.
%
%
%
%
Let us establish a standard invertibility lemma for these systems:
\begin{lemma}
    [Invertibility of negatives]
    \label{neg-rules-invertible}
     $\muMALLordcut\alpha \delta$ is closed under the~rules:
    \[
    \vlinf{}{}{\Gamma}{\Gamma, \bot}
    \qquad
    \vlinf{}{}{\Gamma, A, B}{\Gamma, A \vlpa B}
    \qquad
    \vlinf{}{\text{\footnotesize $i \in \{0,1\}$}}{\Gamma, A_i}{\Gamma, A_0 \vlan A_1}
    \qquad
    \vlinf{}{\text{\footnotesize $\gamma<\beta$}}{\Gamma, A(\nuord \gamma x A(x)}{\Gamma, \nuord \beta x A(x)}
    \]
\end{lemma}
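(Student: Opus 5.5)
We prove all four closure properties at once by induction on the (wellfounded, possibly infinitely branching) derivation $\pi$ of the premiss in $\muMALLordcut\alpha\delta$. Fix the distinguished occurrence of the negative formula ($\bot$, $A\vlpa B$, $A_0\vlan A_1$ or $\nuord\beta x A(x)$) in the conclusion of $\pi$. We transform $\pi$ into a derivation of the conclusion of the inverted rule. The new derivation has height at most that of $\pi$ and introduces no new cuts, so every cut in it still has rank $<\delta$. We case split on the last inference $\mathsf r$ of $\pi$.

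\emph{Case 1: the distinguished occurrence is principal in $\mathsf r$.} Then $\mathsf r$ is the corresponding negative rule ($\bot$, $\vlpa$, $\vlan$ or $\nuord\beta$). The required derivation is simply the immediate subderivation: the premiss $\Gamma$, the premiss $\Gamma,A,B$, the $i$-th premiss $\Gamma,A_i$, or the $\gamma$-th premiss $\Gamma, A(\nuord\gamma x A(x))$, respectively.

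\emph{Case 2: $\mathsf r$ is an axiom in which the occurrence is not principal.} This can only be a $\top$ step, since $\identity$ has exactly two formulas and $1$ has one. Replacing the distinguished formula in the context of $\top$ again gives a $\top$ axiom.

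\emph{Case 2 (identity): the occurrence is principal in $\identity$.} Then $\pi$ is $\identity$ on $C,\neg C$, where $C$ is the distinguished negative formula. We must derive, for instance, $\neg C', \ldots$ from scratch. Here we appeal to \cref{functors-and-eta}, which shows that identities on arbitrary formulas are derivable cut-free from atomic ones. So we may assume up front that $\identity$ is applied to atomic formulas only, after $\eta$-expanding $\pi$ without adding cuts. Under this assumption the case cannot occur. The expansion yields a cut-free derivation of the same conclusion, so it does not affect the bounded cut rank. Alternatively, we can handle the case directly: the expanded identity on $C$ ends in the negative rule for $C$, which reduces it to Case 1.

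\emph{Case 3: $\mathsf r$ is any other rule with the occurrence in its context.} This includes the $\cut$ rule, where the occurrence lies in one of the premisses' contexts $\Gamma$ or $\Delta$, since cut formulas are not in the conclusion. We apply the induction hypothesis to each premiss containing the occurrence; there may be infinitely many premisses in the case of $\nuord{\beta'}$. We then reapply $\mathsf r$. This is legitimate because the context of every rule is arbitrary, and in the $\nuord\beta$ case the family of inverted subderivations is indexed by the same $\gamma'<\beta'$. The cut ranks are unchanged, so the result remains in $\muMALLordcut\alpha\delta$. Since derivations are wellfounded, the induction is legitimate even with transfinite branching. We formally induct on the ordinal height of $\pi$, which is well defined for wellfounded trees of arbitrary branching.

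The main obstacle I expect is the identity case. Once general identities are reduced to atomic ones via \cref{functors-and-eta} (which uses only rank-induction, \cref{ranks-exist}), everything else is a routine permutation argument.
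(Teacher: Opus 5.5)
Your proof is correct and is essentially the paper's argument. The paper phrases it globally: it replaces every direct ancestor of the distinguished negative formula by the appropriate auxiliary formula(s) and repairs the steps at its origins, which is your induction on the height of $\pi$ unfolded, and your explicit handling of a non-atomic $\identity$ step with the distinguished formula principal (via $\eta$-expansion from \cref{functors-and-eta}, or directly) covers a case the paper leaves implicit.
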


Note that we do not need to state any `height-preserving' invariant above, morally as ranks induce a bound also on the height of proofs.

\begin{proof}
Each item is proved in a similar way, simply replacing the principal formula of an invertible step by appropriate auxiliary formulas and deleting appropriate principal steps. Formally:
    \begin{enumerate}
        \item In a $\muMALLordcut \alpha \delta$ proof of $\Gamma,\bot$, delete every direct ancestor of $\bot$. 
        We repair critical steps, at origins of $\bot$, by:
        \[
        \vlinf{\bot}{}{\Sigma, \bot}{\Sigma}
        \quad \leadsto \quad
        \Sigma
        \]
        \item In a $\muMALLordcut \alpha \delta$ proof of $\Gamma,A\vlpa B$, replace every direct ancestor of $A\vlpa B$ by the cedent $A,B$.
        We repair critical steps, at origins of $A\vlpa B$, by:
        \[
        \vlinf{\vlpa}{}{\Sigma , A \vlpa B}{\Sigma, A, B}
        \quad \leadsto \quad
        \Sigma, A,B
        \]
        \item In a $\muMALLordcut \alpha \delta$ proof of $\Gamma,A_0\vlan A_1$, replace every direct ancestor of $A_0\vlan A_1$ by $A_i$. We repair critical steps, at origins of $A_0 \vlan A_i $, by:
        \[
        \vliinf{\vlan}{}{\Sigma, A_0 \vlan A_1}{\Sigma, A_0}{\Sigma, A_1}
        \quad \leadsto \quad
        \Sigma, A_i
        \]
        \item In a $\muMALLordcut \alpha \delta$ proof of $\Gamma,\nuord \beta X A(X)$, replace every direct ancestor of $\nuord \beta X A(X)$ by $A(\nuord \gamma X A(X))$. We repair critical steps, at origins of $\nuord \beta X A(X) $, by:
        \[
        \vlinf{\nuord \beta}{}{\Sigma, \nuord \beta X A(X)}{ \{\Sigma, A(\nuord \gamma X A(X))\}_{\gamma < \beta} }
        \quad \leadsto \quad
        \Sigma, A(\nuord \gamma X A(X))
        \]
    \end{enumerate}
\end{proof}
Now our main technical result is:

\begin{lemma}
\label{d-prfs-closed-under-d-cuts}
    $\muMALLordcut \alpha \delta $ is closed under cuts of rank $\delta$.
\end{lemma}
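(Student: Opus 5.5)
We must show: given $\muMALLordcut\alpha\delta$ proofs $\pi_1$ of $\Gamma, A$ and $\pi_2$ of $\Delta, \neg A$ with $\rk A = \delta$, there is a $\muMALLordcut\alpha\delta$ proof of $\Gamma,\Delta$. We may assume $\rk{\neg A} = \rk A$; if the rank function is not literally self-dual, this is arranged in \cref{sec:ranks}, and otherwise we read ``rank of a cut'' as the maximum of the two. The plan is the familiar `big step' argument.

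We first fix a measure. By \cref{prf-srch-terminates}, and more concretely the multiset-of-ranks argument behind it, every cut-free derivation and every cut-bounded derivation is wellfounded. So we may proceed by wellfounded induction on $\pi_2$ (the tree itself, not its height), with $\pi_1$ and $\Gamma$ arbitrary. We do a case analysis on the last rule of $\pi_2$.

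\textbf{$\neg A$ not principal.} If the last step of $\pi_2$ is any step other than a $\cut$, we push the cut up into each premiss by the inductive hypothesis and reapply the step. This covers the infinitely branching $\nuord\beta$ steps too, since we apply the IH to each of the $\beta$ premisses. If the last step is a $\cut$, its rank is $<\delta$, and $\neg A$ lies in one of its premisses; we permute in the same way.

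\textbf{$\neg A$ principal in $\pi_2$.} We split on the shape of $A$ and use \cref{neg-rules-invertible} on $\pi_1$ in place of a second induction. By duality we may assume that $\neg A$ is the negative or positive side as convenient.
\begin{itemize}
\item Suppose $\neg A = \muord\beta x B(x)$, concluded from $\Delta, B(\muord\gamma x B)$ with $\gamma<\beta$. Then $A = \nuord\beta x\neg B$, and invertibility gives a proof of $\Gamma, \neg B(\nuord\gamma x \neg B)$ of the same cut bound. The new cut formula has rank $<\delta$ by \cref{ranks-exist}, so the cut is already allowed in $\muMALLordcut\alpha\delta$.
\item If $\neg A = C_0\vlte C_1$, we invert $\vlpa$ in $\pi_1$ and make two cuts, on $C_0$ and on $C_1$, each of rank $<\delta$.
\item If $\neg A = C_0 \vlor C_1$, we invert $\vlan$ and cut once.
\item If $\neg A = 1$, we invert $\bot$.
\item If the step is $\identity$, the result is $\pi_1$ itself.
\end{itemize}
If instead $\neg A$ is negative and principal, we swap roles and do the analogous induction on $\pi_1$, whose last step then either has $A$ non-principal (permute) or has $A$ positive and principal, handled dually. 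Alternatively we first decide which side carries the positive connective and always invert the negative side. This is the cleaner plan: whichever of $A$, $\neg A$ is positive, run the induction on the proof of that side, and invert the other side by \cref{neg-rules-invertible}. Atomic and $\top$ cases are immediate: $\top$ absorbs, and $0$ has no rule.

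I expect the main obstacle to be making sure the induction is legitimate. Two points need checking. First, the permutation case over $\nuord\beta$ steps and over existing lower-rank cuts must be justified by wellfoundedness of the proof tree, not by a finite height. Second, the invertibility lemma must really preserve the bound $\delta$. It does, because it only deletes or relabels direct ancestors, and the footnote on ranks guarantees that those ancestors are never cut formulas except trivially.
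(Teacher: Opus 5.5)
Your cleaner plan is correct and is essentially the paper's argument: induct on the proof of whichever side carries the positive connective, permute past steps where the cut formula is not principal, and at the principal step invert the other side via \cref{neg-rules-invertible} and cut on strictly smaller formulas. The paper simply unfolds this induction as a global replacement of the direct ancestors of the positive cut formula by $\Delta$, repaired at their origins. One small correction: wellfoundedness of proofs that contain cuts holds by definition of these infinitary systems and does not follow from \cref{prf-srch-terminates}, since a cut can increase the multiset of ranks bottom-up; this does not affect your argument.
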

\begin{proof}
    Suppose in $\muMALLordcut\alpha \delta$ we have proofs $\pi_0$ of $ \Gamma , A$ and $\pi_1$ of $ \Delta, \neg A$, where $A$ has rank $\delta$, and let us show that $\muMALLordcut\alpha \delta \proves \Gamma, \Delta$.\todo{maybe visualise the rule}
    We proceed by case analysis on $A$, reducing to cuts on smaller formulas:
    \begin{itemize}
        \item If $A = 1$ then $\neg A = \bot$. We replace every direct ancestor of $1$ in $\pi_0$ by the cedent $\Delta$, repairing critical cases, at origins of $1$, by,
        \[
        \vlinf{1}{}{1}{}
        \quad \leadsto \quad
        \vltreeder{\inv\pi_1}{\Delta}{\quad}{}{\quad}
        \]
        where $\inv\pi_1$ is obtained from $\pi_1$ by invertibility, \cref{neg-rules-invertible}.
        \item If $A = B_0 \vlor B_1$ then $\neg A = \neg B_0 \vlan \neg B_1$.
        We replace every direct ancestor of $B_0 \vlor B_1$ in $\pi_0$ by the cedent $\Delta$, repairing critical cases, at origins of $B_0 \vlor B_1$, by,
        \[
        \vlinf{\vlor}{}{\Sigma, B_0 \vlor B_1}{\Sigma , B_i}
        \quad \leadsto \quad
        \vlderivation{
        \vliin{\dcut{<\delta}}{}{\Sigma, \Delta}{\vlhy{\Sigma, B_i}}{
            \vltr{\inv\pi_1}{\Delta, \neg B_i}{\vlhy{\quad}}{\vlhy{}}{\vlhy{\quad}}
        }
        }
        \]
        where $\inv\pi_1$ is obtained by invertibility, \cref{neg-rules-invertible}.
        \item If $A = B_0 \vlte B_1$ then $\neg A = \neg B_0 \vlpa \neg B_1$. 
        We replace every direct ancestor of $B_0 \vlte B_1$ in $\pi_0$ by the cedent $\Delta$, repairing critical cases, at origins of $B_0 \vlte B_1$, by,
        \[
        \vliinf{\vlte}{}{\Sigma_0, \Sigma_1 , B_0 \vlte B_1}{\Sigma_0, B_0}{\Sigma_1, B_1}
        \quad \leadsto \quad
        \vlderivation{
        \vliin{\dcut{<\delta}}{}{\Sigma_0, \Sigma_1, \Delta}{
            \vlhy{\Sigma_0, B_0}
        }{
            \vliin{\dcut{<\delta}}{}{\Sigma_1, \Delta, \neg B_0}{
                \vlhy{\Sigma_1, B_1}
            }{
                \vltr{\inv\pi_1}{\Delta, \neg B_0, \neg B_1}{\vlhy{\quad}}{\vlhy{}}{\vlhy{\quad}}
            }
        }
        }
        \]
        where $\inv\pi_1 $ is obtained by invertibility, \cref{neg-rules-invertible}.\todo{can set up $\inv\pi_1$ earlier.}
        \item If $A = \muord \beta X B(X)$ then $\neg A = \nuord \beta X \neg B(X)$.
        We replace every direct ancestor of $\muord \beta X B(X)$ in $\pi_0$ by the cedent $\Delta$, repairing critical cases, at origins of $\muord \beta X B(X)$, by,
        \[
        \vlinf{\muord \beta}{\gamma < \beta}{\Sigma, \muord \beta X B(X)}{\Sigma, B(\muord \gamma X B(X))}
        \quad \leadsto \quad
        \vlderivation{
        \vliin{\dcut{<\delta}}{}{\Sigma, \Delta}{
            \vlhy{\Sigma, B(\muord \gamma X B(X))}
        }{
            \vltr{\inv\pi_1}{\Delta, \neg B (\nuord \gamma X \neg B(X))}{\vlhy{\quad}}{\vlhy{}}{\vlhy{\quad}}
        }
        }
        \]
        where $\inv\pi_1$ is obtained by invertibility, \cref{neg-rules-invertible}.
        \item (All other cases are obtained by duality)
        \qedhere
        \end{itemize}
\end{proof}




\begin{proposition}
    \label{d-cuts-to-cf}
    If $\muMALLordcut \alpha \delta \proves \Gamma$ then $\muMALLordcf \alpha \proves \Gamma$, for all $\delta$.
\end{proposition}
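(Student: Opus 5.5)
We argue by transfinite induction on $\delta$, showing that for every $\delta$, every $\muMALLordcut\alpha\delta$ proof can be converted into a $\muMALLordcf\alpha$ proof. The base case $\delta = 0$ is trivial, since $\muMALLordcut\alpha 0$ is exactly $\muMALLordcf\alpha$.

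For the limit case, suppose $\delta$ is a limit ordinal and we have a $\muMALLordcut\alpha\delta$ proof $\pi$ of $\Gamma$. The key point is that $\pi$ need not have cuts of bounded rank below $\delta$: it may be infinitely branching, with cuts of ranks cofinal in $\delta$. So we cannot simply say that $\pi$ lies in some $\muMALLordcut\alpha{\delta'}$ with $\delta'<\delta$. Instead we strengthen the induction: we prove by induction on $\delta$ that $\muMALLordcut\alpha\delta$ is closed under cuts of rank $<\delta$ already without those cuts, i.e.\ that every $\muMALLordcut\alpha\delta$ proof can be transformed into a cut-free one, by a secondary induction on the (wellfounded) proof $\pi$ itself. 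Concretely, we proceed by induction on the height of $\pi$, which is an ordinal since proofs are wellfounded. If the last step of $\pi$ is not a cut, we apply the inner induction hypothesis to all immediate subproofs, possibly infinitely many, and reapply the step. If it is a cut of rank $\delta'<\delta$, the two subproofs become cut-free by the inner induction hypothesis, hence lie in $\muMALLordcut\alpha{\delta'}$. \cref{d-prfs-closed-under-d-cuts} then yields a $\muMALLordcut\alpha{\delta'}$ proof of the conclusion, and the outer induction hypothesis at $\delta'<\delta$ makes it cut-free.

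This argument in fact handles successor and limit $\delta$ uniformly, so no separate successor case is needed. The same proof-height induction works whenever $\delta$ is arbitrary: the only facts used are that $\delta'<\delta$ for every cut occurring in $\pi$, and that a cut-free proof belongs to every $\muMALLordcut\alpha{\delta'}$.

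The main obstacle is the infinitary branching of the $\nuord\beta$ rules, which prevents any finite bound on cut ranks inside a proof. It also forces the inner induction to be on the ordinal height of $\pi$, using wellfoundedness, rather than on the number of cuts. One should check that \cref{d-prfs-closed-under-d-cuts} is applied only to cut-free premisses, so that its output genuinely lives in $\muMALLordcut\alpha{\delta'}$, where the outer hypothesis applies.
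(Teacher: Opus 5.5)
Your proposal is correct and follows the paper's proof: an outer induction on $\delta$, a sub-induction on the structure of the wellfounded (possibly infinitely branching) proof $\pi$ that commutes past non-cut steps, and, at a cut of rank $\epsilon<\delta$, making both premisses cut-free, applying \cref{d-prfs-closed-under-d-cuts} inside $\muMALLordcut\alpha\epsilon$, and finishing with the outer hypothesis at $\epsilon$. Your ``strengthening'' and separate limit-case discussion are unnecessary: the uniform argument you give proves exactly the stated proposition.
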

\begin{proof}
    By induction on the ordinal $\delta$.
    Let $\pi$ be a $\muMALLordcut \alpha \delta$ proof of $\Gamma$, and let us proceed by a sub-induction on the structure of $\pi$.
    The construction simply commutes with any non-cut step, always calling the sub-inductive hypotheses.
    So suppose $\pi $ ends with a $\cut$, having form,
      \[
    \vlderivation{
    \vliin{\dcut\epsilon}{}{\Gamma_0, \Gamma_1}{
        \vltr{\pi_0}{\Gamma_0, A}{\vlhy{\quad}}{\vlhy{}}{\vlhy{\quad}}
    }{
        \vltr{\pi_1}{\Gamma_1, \neg A}{\vlhy{\quad}}{\vlhy{}}{\vlhy{\quad}}
    }
    }
    \]
    where $\rk A = \epsilon <\delta$. 
    Applying the sub-inductive hypothesis to the subproofs $\pi_0, \pi_1$, we have that $\muMALLordcf\alpha$ proves the sequents $\Gamma_0, A$ and $\Gamma_1 , \neg A$.
    So a fortiori $\muMALLordcut \alpha \epsilon$ also proves these sequents, whence $\muMALLordcut \alpha \epsilon \proves \Gamma_0,\Gamma_1$, by \cref{d-prfs-closed-under-d-cuts}.
    Finally we conclude by the main inductive hypothesis, as $\epsilon < \delta$.
\end{proof}

We can finally establish our main cut-elimination theorem:

\begin{proof}
    [Proof of \cref{cut-elim-mumall}]
    Let $\pi$ be a $\muMALLord \alpha $ proof of $\Gamma$ and let $\delta$ be the strict supremum of all its cut ranks, so that $\pi$ is in particular a $\muMALLordcut \alpha \delta $ proof.
    Now we simply conclude by \cref{d-cuts-to-cf}.
\end{proof}

\subsection{Focussing}

\emph{Focussing} is a discipline that organises the invertible and non-invertible phases of proof search.
For this we require a further classification of connectives by \textbf{polarity}, determined by whether their introduction rule is invertible (negative) or not (positive):
\begin{center}
\begin{tabular}{c|c|c}
     & positive & negative \\
     \hline
literal & $p$ & $\neg p$ \\
false & $0$ & $\bot$ \\
true & $1$ & $\top$ \\
disjunction & $\vlor$ & $\vlpa $\\
conjunction & $\vlte$ & $\vlan$ \\
fixed point & $\muord \beta x$ & $\nuord \beta  x$
\end{tabular}    
\end{center}
Note that we have, by convention, assigned each propositional symbol $p$ positive and its negation $\neg p$ negative. 
Indeed note that negation flips the positive/negative status of a formulas's main connective.

\begin{figure}
    
\textbf{Identity and focussing rules:}
\[
\vlinf{\identity}{}{p \Downarrow \neg p}{}
\quad
\vlinf{\store}{\text{$A$ positive or literal}}{\Gamma \Uparrow \Delta, A}{\Gamma , A \Uparrow \Delta}
\quad
\vlinf{\decide}{\text{$A$ positive}}{\Gamma, A \Uparrow \cdot }{\Gamma \Downarrow A}
\quad
\vlinf{\release}{\text{$A$ negative or literal}}{\Gamma \Downarrow A}{\Gamma \Uparrow A}
\]
\tikhon{negative or literal $===>$ literal?}
\textbf{Asynchronous phase:}
\[
\vlinf{\top}{}{\Gamma \Uparrow \Delta, \top}{}
\quad
\vlinf{\bot}{}{\Gamma\Uparrow  \Delta, \bot}{\Gamma \Uparrow \Delta}
\quad
\vlinf{\vlpa}{}{\Gamma \Uparrow\Delta, A \vlpa B}{\Gamma \Uparrow \Delta, A, B}
\quad
\vliinf{\vlan}{}{\Gamma \Uparrow \Delta, A \vlan B}{\Gamma \Uparrow \Delta, A}{\Gamma \Uparrow \Delta, B}
\quad
\vlinf{\nuord\beta}{}{\Gamma \Uparrow \Delta, \nu^\beta X A(X)}{
\{
\Gamma \Uparrow \Delta , A(\nu^\gamma X A(X))
\}_{\gamma < \beta}
}
\]

\textbf{Synchronous phase:}
\[
\vlinf{1}{}{\cdot \Downarrow 1}{}
\quad
\vlinf{\vlor}{i\in \{0,1\}}{\Gamma \Downarrow A_0 \vlor A_1}{\Gamma \Downarrow A_i}
\quad
\vliinf{\vlte}{}{\Gamma, \Delta \Downarrow A \vlte B}{\Gamma \Downarrow A}{\Delta \Downarrow B}
\quad
\vlinf{\muord\beta}{\gamma <\beta}{\Gamma \Downarrow \mu^\beta X A(X)}{\Gamma \Downarrow A(\mu^\gamma X A(X))}
\]
    \caption{The cut-free focussed system $\muMALLFord\alpha$.}
    \label{fig:muMALLFord}
\end{figure}

The focussing discipline is usually presented as a decorated version of the underlying proof system, admitting sufficient bookkeeping to enforce the discipline. Sequents of the focussed calculus $\muMALLFord \alpha$ are of the form $\Gamma \Downarrow A$ or $\Gamma \Uparrow \Theta$ where $\Gamma$ is a multiset of formulas, $A$ is a formula and $\Theta$ is a list of formulas.
\begin{definition}
    [Focussed system]
    The system $\muMALLFord \alpha $ is given in \cref{fig:muMALLFord}.
\end{definition}

Notice that, bottom-up, once a positive formula has been chosen principal, an auxiliary formula of each premiss must remain principal as long as it is positive.
This is what significantly narrows the proof search space, further to cut-elimination.
The \emph{focussing theorem} establishes that all proofs can be rewritten so as to fit the focussing discipline. 
It is usually presented as a sort of completeness result:

\begin{theorem}
[Focussing]
\label{focussing-completeness}
    $\muMALLordcf{\alpha} \proves \Gamma \iff \muMALLFord\alpha \proves \cdot \Uparrow \Gamma$.
\end{theorem}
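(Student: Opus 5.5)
The right-to-left direction is soundness: I will erase the focussing decorations. Map each focussed sequent $\Gamma \Uparrow \Theta$ or $\Gamma \Downarrow A$ to the plain sequent $\Gamma, \Theta$ (resp.\ $\Gamma, A$). Under this translation the rules $\store$, $\decide$, $\release$ become trivial and may be deleted. The identity axiom $p \Downarrow \neg p$ becomes an atomic $\identity$ step. Every other focussed rule is an instance of the corresponding rule of $\muMALLordcf\alpha$. This is a straightforward induction on the (wellfounded) focussed proof.

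For the left-to-right direction I follow the standard route via invertibility and positive-phase permutations, inducting on the rank multiset from \cref{prf-srch-terminates}. Induction on proof height is problematic with infinite branching, while every cut-free proof search path terminates.

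I will prove a generalised statement. If $\muMALLordcf\alpha \proves \Gamma, \Theta$, with $\Gamma$ consisting of positive formulas and literals, then $\muMALLFord\alpha \proves \Gamma \Uparrow \Theta$. The induction is on the multiset of ranks of $\Gamma,\Theta$, with the formula count as a tiebreak for the storing steps.

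In the asynchronous phase I treat the last formula of $\Theta$. If it is negative, I apply the corresponding focussed rule and obtain the premisses of the plain proof by \cref{neg-rules-invertible} at $\delta=0$; for $\top$ no premisses are needed. This uses invertibility of $\bot$, $\vlpa$, $\vlan$ and $\nuord\beta$, and the ranks strictly decrease. If it is positive or a literal, I apply $\store$. When $\Theta$ is empty, all of $\Gamma$ is positive or literal. In a cut-free proof of such a sequent the last rule is then positive or an atomic identity, assuming identities have been made atomic as in \cref{functors-and-eta}. Say it has principal formula $A$. I $\decide$ on $A$ and must build a focused derivation of $\Gamma' \Downarrow A$.

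The main obstacle is the focusing lemma proper. From a cut-free proof of $\Gamma', A$ with $A$ positive, I must obtain one in which the positive decomposition of $A$ happens first and continues hereditarily through the positive subformulas of $A$. The $\identity$ axiom is handled by $\release$/$\store$ steps against $\neg p$.

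To get this I first show that every cut-free proof of a sequent consisting only of positive formulas and literals can be rearranged to act first on some chosen formula. The standard choice is a formula whose positive tree is introduced last (topmost) among positive rules, following Andreoli's argument. This needs permutation lemmas showing that a positive rule on $A$ permutes below any other rule: below $\vlte$, $\vlor$ and $\muord\beta$ (rule instances are unaffected), and below negative rules including the infinitely-branching $\nuord\beta$, where the positive step is duplicated into each branch. I would phrase these permutations, like \cref{neg-rules-invertible}, as global operations on direct ancestors, so that no height measure is needed. Termination of the repeated permutations is justified by rank decrease.

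Once $A$'s focus is maximal, the remaining subformulas are negative or literals; $\release$ and the induction hypothesis apply to them, since their rank is strictly smaller.
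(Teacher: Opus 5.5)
Your soundness direction and your treatment of the asynchronous phase (invertibility plus $\store$, with the formula-count tie-break) are fine. The gap is in the synchronous phase, which is where the real content of the theorem lies.

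First, deciding on the principal formula $A$ of the last rule is wrong in general. Take $F = ((a \vlte 1)\vlor(b \vlte 1))\vlor 0$ and $G = (\neg a \vlan \neg b)\vlor 0$. The sequent $F, G$ has a cut-free proof that ends with $\vlor$ on $F$, followed by $\vlor$ on $G$, then $\vlan$, then the matching disjunct of $F$ in each branch. Yet no focussed proof decides on $F$ first. Focus on $F$ forces an immediate choice, say $a \vlte 1$. The $\vlte$ step must then send $G$ to the $a$ side, and the branch $a, \neg b$ is unprovable.

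You then retreat to ``some chosen formula''. The criterion you give (``positive tree introduced last/topmost'') is not justified, and on its natural reading it selects $F$ here. The correct criterion is missing: a formula whose positive trunk is not interrupted by any negative step coming from another formula's trunk. So is the proof that such a formula exists, which is Andreoli's or Miller and Saurin's acyclicity argument using context splitting at $\vlte$. That argument would also have to be re-checked in the presence of infinitely branching $\nuord\beta$ steps.

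Second, your permutation lemma runs in the wrong direction. A positive step does not in general permute \emph{below} a $\vlan$ or $\nuord\beta$ step. Duplicating a step into every premiss is what happens when a \emph{negative} step moves down, which is just \cref{neg-rules-invertible}. Moving a positive step down past a branching step requires its instances in all premisses to coincide. They need not: the premisses can make different disjunct choices, different $\vlte$ splits, or use different $\gamma$ in a $\muord{\beta'}$ step across the premisses of a $\nuord\beta$. The example above shows exactly this.

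The paper avoids the choice problem entirely by moving positive steps \emph{up} rather than down. It inducts on the rank multiset and takes whatever positive step $r$ concludes the unfocussed proof of an all-positive sequent. It applies the induction hypothesis to the premiss(es) of $r$ to obtain focussed proofs, then pushes $r$ up into them:
\begin{itemize}
\item direct ancestors of the auxiliary formula are replaced by the principal formula;
\item for $\vlte$, they are replaced by the cedent $\Gamma_1, A_0 \vlte A_1$, with a nested grafting into $\pi_1'$ at its decisions on $A_1$ when $A_1$ is positive;
\item each $\decide$ on the auxiliary formula is repaired by inserting $r$ inside the focus.
\end{itemize}
This always succeeds because a stored positive non-literal formula can only be consumed by a $\decide$ or vanish at $\top$. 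Duplicating $r$ across $\vlan$ or $\nuord\beta$ branches is harmless. In the example, the focussed proof of the premiss decides on $G$ first, and the $\vlor$ step on $F$ is simply inserted at the two later decisions on $(a \vlte 1)\vlor(b \vlte 1)$.
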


Of course the $\impliedby$ direction is immediate, by simply ignoring focussing decorations.
We shall prove the more difficult $\implies$ direction via rule commutations.
Rather than factoring into several intermediate lemmas, it is quicker to employ a rather `heavy handed' approach and prove the theorem all at once.

\begin{proof}
    We shall prove a slightly stronger result, that if $\muMALLordcf \alpha \proves \Gamma$ then $\muMALLFord\alpha \proves \Gamma^+ \Uparrow \Gamma^-$, where $\Gamma^+$ is the positive part of $\Gamma $ and $\Gamma^-$ is the negative part of $\Gamma$. From here the stated result follows by applying several store rules, $\store$.
    We proceed by induction on the multiset of ranks of formulas in $\Gamma$.

    First, if $\Gamma $ contains a negative formula, we may apply the invertibility lemma and inductive hypotheses to conclude. For instance, if $\pi$ is a $\muMALLordcf \alpha$ proof of $\Delta, \nuord\beta x A(x)$ and $A(x)$ is positive then we construct the $\muMALLFord \alpha$ proof,
    \[
    \vlinf{}{}{\Delta^+ \Uparrow  \Delta^-, \nuord \beta x A(x)}{
    \left\{
    \vlderivation{
    \vlin{\store}{}{\Delta^+ \Uparrow \Delta^-, A(\nuord \gamma x A(x))}{
    \vltr{\IH(\inv\pi)}{\Delta^+, A(\nuord\gamma x A(x)) \Uparrow \Delta^-}{\vlhy{\qquad}}{\vlhy{\quad}}{\vlhy{\qquad}}
    }
    }
    \right\}_{\gamma<\beta}
    }
    \]
    where derivations marked $\IH(\inv \pi)$ are obtained by first applying invertibility to $\pi$, \cref{neg-rules-invertible}, then calling the inductive hypothesis.
    If $A$ is negative we simply omit the uppermost $\store$ steps.
    
    Now suppose $\Gamma $ consists only of positive formulas. Let us conduct a case analysis on the concluding (positive) step of some proof $\pi$ of $\Gamma$ in $\muMALLord\alpha$.

    If $\pi$ has the form,
    \[
    \vlinf{1}{}{1}{}
    \]
    then we construct the required proof as follows:
    \[
    \vlderivation{
    \vlin{\decide}{}{1 \Uparrow\cdot}{
    \vlin{1}{}{\cdot \Downarrow 1}{\vlhy{}}
    }
    }
    \]
    
    If $\pi$ has the form,
        \[
        \vlderivation{
        \vlin{}{}{\Delta, A_0 \vlor A_1}{
        \vltr{\pi_0}{\Delta, A_0}{\vlhy{\quad }}{\vlhy{}}{\vlhy{\quad}}
        }
        }
        \]
        then let $\pi_0'$ be the $\muMALLFord\alpha $ proof obtained from $\pi_0$ by the inductive hypothesis.\todo{explain rank/height compatibility again.}
        We have two subcases:
        \begin{itemize}
            \item If $A_0$ is negative then $\pi_0'$ has conclusion $ \Delta \Uparrow  A_0$. We now construct the required proof as follows:
            \[
            \vlderivation{
            \vlin{\decide}{}{\Delta, A_0 \vlor A_1 \Uparrow \cdot}{
            \vlin{\vlor}{}{\Delta \Downarrow A_0 \vlor A_1}{
            \vlin{\release}{}{\Delta \Downarrow A_0 }{
            \vltr{\pi_0'}{\Delta \Uparrow A_0}{\vlhy{\quad }}{\vlhy{}}{\vlhy{\quad }}
            }
            }
            }
            }
            \]
            \item If $A_0 $ is positive then $\pi_0'$ has conclusion $\Delta, A_0 \Uparrow \cdot$. 
            We replace every direct ancestor of $A_0$ in $\pi_0'$ by $A_0 \vlor A_1$.
        The critical cases, decisions on $A_0$, are repaired as follows:
        \[
        \vlinf{\decide}{}{\Sigma , A_0 \Uparrow \cdot }{\Sigma \Downarrow A_0}
        \quad \leadsto \quad
        \vlderivation{
        \vlin{\decide}{}{\Sigma, A_0 \vlor A_1 \Uparrow \cdot}{
        \vlin{\vlor}{}{\Sigma \Downarrow A_0 \vlor A_1}{
        \vlhy{\Sigma\Downarrow A_0 }
        }
        }
        }
        \]
        \end{itemize}
        
    If $\pi$ finishes with the other $\vlor $ step, with same conclusion and auxiliary formula $A_1$, the reasoning is similar.

    If $\pi $ has the form,
    \[
    \vlderivation{
    \vliin{\vlte}{}{\Gamma_0, \Gamma_1, A_0 \vlte A_1}{
        \vltr{\pi_0}{\Gamma_0 , A_0 }{\vlhy{\quad }}{\vlhy{}}{\vlhy{\quad}}
    }{
        \vltr{\pi_1}{\Gamma_1 , A_1 }{\vlhy{\quad}}{\vlhy{}}{\vlhy{\quad}}
    }
    }
    \]
    then let $\pi_0'$ and $\pi_1'$ be the $\muMALLFord \alpha $ proofs obtained by the inductive hypothesis from $\pi_0$ and $\pi_1$ respectively.
    We have the following subcases:
\begin{itemize}
    \item If $A_0$ and $A_1$ are negative then $\pi_0'$ has conclusion $\Gamma_0\Uparrow A_0$ and $\pi_1'$ has conclusion $\Gamma_1 \Uparrow A_1$.
    We now construct the required proof as follows:
    \[
    \vlderivation{
    \vlin{\decide}{}{\Gamma_0, \Gamma_1 , A_0 \vlte A_1 \Uparrow \cdot}{
    \vliin{\vlte}{}{\Gamma_0, \Gamma_1 \Downarrow A_0 \vlte A_1 }{
        \vlin{\release}{}{\Gamma_0\Downarrow A_0}{
        \vltr{\pi_0'}{\Gamma_0 \Uparrow A_0}{\vlhy{\quad}}{\vlhy{}}{\vlhy{\quad}}
        }
    }{
        \vlin{\release}{}{\Gamma_1 \Downarrow A_1}{
        \vltr{\pi_1'}{\Gamma_1 \Uparrow A_1}{\vlhy{\quad}}{\vlhy{}}{\vlhy{\quad}}
        }
    }
    }
    }
    \]
    \item Otherwise suppose, WLoG, $A_0$ is positive, so that $\pi_0'$ has conclusion $\Gamma_0 , A_0 \Uparrow\cdot$.
    We replace every direct ancestor of $A_0$ in $\pi_0'$ by the cedent $\Gamma_1 , A_0 \vlte A_1$.
    The critical cases are decisions on $A_0$ in $\pi_0'$:
    \begin{equation}
        \label{eq:crit-case-tensor-replacement}
        \vlinf{\decide}{}{\Sigma, A_0 \Uparrow \cdot}{\Sigma \Downarrow A_0}
        \quad \leadsto \quad
        \vlinf{}{}{\Sigma, \Gamma_1, A_0 \vlte A_1 \Uparrow\cdot}{?}
    \end{equation}
    These are repaired according to a subsubcase analysis on $A_1$:
    \begin{itemize}
        \item If $A_1$ is negative then $\pi_1'$ has conclusion $\Gamma_1 \Uparrow A_1$ and we repair the critical case \cref{eq:crit-case-tensor-replacement} by:
        \[
        \vlderivation{
        \vlin{\decide}{}{\Sigma, \Gamma_1 , A_0 \vlte A_1 \Uparrow\cdot}{
        \vliin{\vlte}{}{\Sigma, \Gamma_1 \Downarrow A_0 \vlte A_1}{
            \vlhy{\Sigma \Downarrow A_0}
        }{
            \vlin{\release}{}{\Gamma_1 \Downarrow A_1}{
            \vltr{\pi_1'}{\Gamma_1 \Uparrow A_1}{\vlhy{\quad}}{\vlhy{}}{\vlhy{\quad}}
            }
        }
        }
        }
        \]
        \item If $A_1$ is positive then $\pi_1'$ has conclusion $\Gamma_1, A_1 \Uparrow\cdot$.
        We repair the critical case \cref{eq:crit-case-tensor-replacement} by replacing every direct ancestor of $A_1 $ in $\pi_1'$ by $\Sigma, A_0 \vlte A_1$. 
        Note that this indeed produces the correct conclusion, cf.~\cref{eq:crit-case-tensor-replacement}.
        Now the new critical cases are  decisions on $A_1$ in $\pi_1'$, which we repair as follows:
        \[
        \vlinf{\decide}{}{\Pi, A_1 \Uparrow \cdot}{\Pi \Downarrow A_1}
        \quad \leadsto \quad
        \vlderivation{
        \vlin{\decide}{}{\Pi, \Sigma, A_0 \vlte A_1 \Uparrow\cdot}{
        \vliin{\vlte}{}{\Pi, \Sigma \Downarrow A_0 \vlte A_1}{
            \vlhy{\Sigma\Downarrow A_0}
        }{
            \vlhy{\Pi \Downarrow A_1}
        }
        }
        }
        \]
    \end{itemize}
\end{itemize}

If $\pi$ has the form,
\[
\vlderivation{
\vlin{\muord \beta}{\gamma< \beta}{\Delta, \muord \beta X A(X)}{
\vltr{\pi_0}{\Delta, A(\muord \gamma X A(X))}{\vlhy{\quad}}{\vlhy{}}{\vlhy{\quad}}
}
}
\]
then let $\pi_0'$ be the $\muMALLFord\alpha$ proof obtained from $\pi_0$ by the inductive hypothesis. We have two subcases:
\begin{itemize}
    \item If $A(\muord \gamma X A(X))$ is negative then $\pi_0'$ has conclusion $\Delta \Uparrow A(\muord \gamma X A(X))$. 
    We now construct the required proof as follows:
    \[
    \vlderivation{
    \vlin{\decide}{}{\Delta,\muord \beta X A(X) \Uparrow \cdot}{
    \vlin{\muord \beta}{}{\Delta \Downarrow \muord \beta X A(X)}{
    \vlin{\release}{}{\Delta \Downarrow A(\muord \gamma X A(X))}{
    \vltr{\pi_0'}{\Delta \Uparrow A(\muord \gamma X A(X))}{\vlhy{\quad}}{\vlhy{}}{\vlhy{\quad}}
    }
    }
    }
    }
    \]
    \item If $A(\muord \gamma X A(X))$ is positive then $\pi_0'$ has conclusion $\Delta, A(\muord \gamma X A(X)) \Uparrow \cdot$.
    We replace every direct ancestor of $A(\muord \gamma X A(X))$ in $\pi_0'$ by $\muord \beta X A(X)$. 
    The critical cases, decisions on $A(\muord \gamma X A(X))$, are repaired as follows:
    \[
    \vlinf{\decide}{}{\Sigma, A(\muord \gamma X A(X)) \Uparrow\cdot}{\Sigma \Downarrow A(\muord \gamma X A(X))}
    \quad \leadsto \quad
    \vlderivation{
    \vlin{\decide}{}{\Sigma, \muord \beta X A(X) \Uparrow \cdot}{
    \vlin{\muord \beta}{}{\Sigma \Downarrow \muord \beta X A(X)}{
    \vlhy{\Sigma \Downarrow A(\muord \gamma X A(X))}
    }
    }
    }
    \qedhere
    \]
\end{itemize}
\end{proof}

\section{Bounds on formula ranks}
\label{sec:ranks}

In this section we shall make explicit the rank function $\rank$ mentioned earlier and duly prove \cref{ranks-exist}.
Our intention is to define ranks in a \emph{tight} manner, so that we can readily obtain our lower and upper bounds on provability later. 
This will also induce a more fine-grained notion of rank of a sequent than the multiset measure of \cref{prf-srch-terminates}.
For this reason we shall also carry out a number of calculations for bounding formula ranks.

Henceforth we shall write $\Ord$ for the class of all ordinals. 

\subsection{Natural operations on ordinals}
Due to commutativity of the binary connectives, our definition of rank will rather work with commutative variants of arithmetic operations on $\Ord$, known as the \emph{natural} operations.
In this subsection we shall recall some of their properties.
We assume the reader's familiarity with basic ordinal arithmetic, namely, with ordinal addition, multiplication and exponentiation. We denote a sum of the form $\alpha_0+\ldots+\alpha_{k-1}$ by $\sum\limits_{i<k} \alpha_i$.

\begin{definition}
[Degrees]
    The \textbf{degree} $\deg \alpha$ of an ordinal $\alpha>0$ is the unique ordinal $\beta$ such that $\alpha = \omega^{\beta} \cdot n + \gamma$ for some $0 < n < \omega$ and some $\gamma<\omega^\beta$.
\end{definition}

\begin{proposition}\label{proposition:omega-power}
    If $\alpha \ge \omega$, then $\alpha^\omega = \omega^{(\deg \alpha) \omega}$.
\end{proposition}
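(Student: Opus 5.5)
We prove $\alpha^\omega = \omega^{(\deg\alpha)\omega}$ by sandwiching $\alpha$ between two powers of $\omega$ and using continuity of exponentiation at $\omega$. Let $\beta = \deg \alpha$, so $\alpha = \omega^\beta \cdot n + \gamma$ with $0<n<\omega$ and $\gamma < \omega^\beta$. Since $\alpha \ge \omega$, we have $\beta \ge 1$. From the Cantor normal form we get the bounds
\[
\omega^\beta \le \alpha < \omega^{\beta} \cdot (n+1) \le \omega^{\beta+1}.
\]

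Next we use monotonicity of $\xi \mapsto \xi^k$ in the base for each finite $k$. This gives
\[
\omega^{\beta k} \le \alpha^k \le \omega^{(\beta+1)k},
\]
using $(\omega^\delta)^k = \omega^{\delta k}$. Since $\alpha^\omega = \sup_{k<\omega} \alpha^k$ by the limit clause of ordinal exponentiation (valid as $\alpha \ge 2$), taking suprema yields
\[
\sup_k \omega^{\beta k} \;\le\; \alpha^\omega \;\le\; \sup_k \omega^{(\beta+1)k}.
\]
By continuity of $\delta \mapsto \omega^\delta$, these two suprema equal $\omega^{\sup_k \beta k} = \omega^{\beta\omega}$ and $\omega^{(\beta+1)\omega}$ respectively.

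The key step, and the only real subtlety, is showing $(\beta+1)\cdot\omega = \beta\cdot\omega$ for $\beta \ge 1$. For this, note that $(\beta+1)k \le \beta k + k \le \beta(k+1)$ for $k<\omega$. The second inequality holds because $k \le \beta\cdot k$ when $\beta\ge1$: indeed $\beta k + k \le \beta k + \beta k$ once $k \le \beta k$ (we do not need that the sum equals $\beta(k+1)$, only this bound, since $\beta k+\beta k = \beta(2k)$). Hence $(\beta+1)k \le \beta\cdot(2k)$, so
\[
\sup_k (\beta+1)k \le \sup_k \beta\cdot 2k = \beta\omega.
\]
The reverse inequality $\beta\omega \le (\beta+1)\omega$ is trivial. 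Combining this with the sandwich above gives $\alpha^\omega = \omega^{\beta\omega}$.

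One must also be careful that ordinal multiplication is not commutative: the inequality $(\beta+1)k \le \beta k + k$ follows by induction on $k$, using left distributivity $\delta(k+1) = \delta k + \delta$. With that checked, every remaining step is a standard monotonicity or continuity fact of ordinal arithmetic.
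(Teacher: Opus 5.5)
Your proof is correct and follows essentially the same route as the paper: you sandwich $\omega^{\deg\alpha}\le\alpha\le\omega^{\deg\alpha+1}$, raise to the power $\omega$, and collapse $(\deg\alpha+1)\omega=(\deg\alpha)\omega$ using $\deg\alpha>0$, which you justify in more detail than the paper does. One small slip: the intermediate claim $\beta k+k\le\beta(k+1)$ is false for finite $\beta$ (take $\beta=1$, $k=2$), but you never actually need it, because the bound you end up using, $(\beta+1)k\le\beta k+\beta k=\beta(2k)$, is correct (it also follows directly from $\beta+1\le\beta\cdot 2$ and monotonicity of multiplication in the left argument).
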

\begin{proof}
    $(\omega^{\deg \alpha})^\omega \le \alpha^\omega \le \left(\omega^{\deg \alpha+1}\right)^\omega = \omega^{(\deg \alpha+1)\omega} = \omega^{(\deg \alpha) \omega}$. The last inequality holds since $\deg \alpha > 0$.
\end{proof}

\begin{definition}
[Natural sum and product]
\label{definition:natural-sum}
    Write 
    $\alpha = \sum\limits_{i<k} \omega^{\gamma_i} a_i$ and 
    $\beta = \sum\limits_{i<k} \omega^{\gamma_i} b_i$
    where $\gamma_0 > \ldots > \gamma_{k-1}$ and $a_i,b_i < \omega$. The \textbf{natural sum} $\alpha \natsum \beta$ is defined as:
    \[
    \alpha \natsum \beta \df \sum\limits_{i<k} \omega^{\gamma_i} (a_i + b_i)
    \]
    and the \textbf{natural product} $\alpha \natprod \beta$ is defined as:
    \[
    \alpha \natprod \beta \df \bignatsum_{i, j < k} {(\omega^{\gamma_i \natsum \gamma_j} a_ib_j)}.
    \]
\end{definition}
Here $\bignatsum_{i \in I} \alpha_i$ is $0$ if $I = \emptyset$, else $\alpha_j \natsum\bignatsum_{i \in I \setminus\{j\}} \alpha_i$ for some $j \in I$ (the index set $I$ is finite). This is a well-defined operation independent of the choice of $j$, thanks to the following proposition.

\begin{proposition}[{\cite[Lemma 3.3]{deJonghP77}}]\label{proposition:properties-natural-arithmetic}
    \leavevmode
    \begin{enumerate}
        \item $\natsum$ and $\natprod$ are commutative, associative and monotone operations.
        \item $\natprod$ distributes over $\natsum$: $(\alpha \natsum \beta) \natprod (\alpha \natsum \gamma) = \alpha \natprod (\beta \natsum \gamma)$.
        \item\label{item:natsum-omega^gamma} If $\alpha < \omega^\gamma$ and $\beta < \omega^\gamma$, then $\alpha \natsum \beta < \omega^\gamma$.
    \end{enumerate}
\end{proposition}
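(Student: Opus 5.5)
The plan is to reduce every item to elementary manipulations of Cantor normal forms. I read each ordinal as a finitely supported function from exponents to natural numbers. Given finitely many ordinals, pad their normal forms over a common decreasing list of exponents $\gamma_0>\dots>\gamma_{k-1}$, allowing zero coefficients, exactly as in \cref{definition:natural-sum}. Then $\natsum$ is just coordinatewise addition of coefficient vectors. Comparison of ordinals is the lexicographic comparison of these vectors, read from the largest exponent downwards.

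\textbf{Natural sum.} Commutativity and associativity of $\natsum$ follow at once from those of addition on $\omega$, coordinatewise. For (strict) monotonicity, suppose $\beta<\beta'$ and let $\gamma_i$ be the largest exponent where their coefficients differ, so $b_i<b'_i$. Then $\alpha\natsum\beta$ and $\alpha\natsum\beta'$ agree on all exponents above $\gamma_i$, and at $\gamma_i$ we have $a_i+b_i<a_i+b'_i$. Hence $\alpha\natsum\beta<\alpha\natsum\beta'$.

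\textbf{Item 3.} $\alpha<\omega^\gamma$ holds iff every exponent in the normal form of $\alpha$ is $<\gamma$. The support of $\alpha\natsum\beta$ is the union of the supports of $\alpha$ and $\beta$, so all its exponents are again $<\gamma$. This also justifies the well-definedness of $\bignatsum$ as a finite sum independent of the order of summation, since that only needs commutativity and associativity of $\natsum$.

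\textbf{Natural product.} I view ordinals as formal polynomials $\sum_\gamma a_\gamma X^\gamma$ with coefficients in $\omega$ and exponents in the commutative monoid $(\Ord,\natsum,0)$. Under this reading $\natsum$ is polynomial addition and $\natprod$, as defined, is polynomial multiplication: the coefficient of $X^\delta$ in $\alpha\natprod\beta$ is $\sum_{\gamma\natsum\gamma'=\delta}a_\gamma b_{\gamma'}$. This is the monoid semiring $\omega[\Ord]$, and monoid semirings over commutative monoids are commutative, associative and distributive. So commutativity, associativity and the distributive law (item 2, understood as $\alpha\natprod(\beta\natsum\gamma)=(\alpha\natprod\beta)\natsum(\alpha\natprod\gamma)$) follow by the standard algebraic verification. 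The only input needed is that $\natsum$ on exponents is commutative and associative, which we have already shown.

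\textbf{Monotonicity of $\natprod$ (main obstacle).} Let $\alpha>0$ with leading exponent $\gamma_0$, and let $\beta<\beta'$. By distributivity it is tempting to write $\beta'=\beta\natsum\epsilon$, but natural differences need not exist in general. So I argue directly by leading terms instead. Let $\delta$ be the largest exponent where $\beta,\beta'$ differ, with $b_\delta<b'_\delta$. Split $\beta=\beta_{>}\natsum\beta_{\le}$ and $\beta'=\beta_{>}\natsum\beta'_{\le}$, where $\beta_>$ collects the common terms with exponent $>\delta$. By distributivity it suffices to show $\alpha\natprod\beta_{\le}<\alpha\natprod\beta'_{\le}$, using monotonicity of $\natsum$ just proved. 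Now $\beta'_{\le}\ge\omega^\delta b'_\delta$, so $\alpha\natprod\beta'_{\le}$ has coefficient at least $a_{\gamma_0}b'_\delta$ at exponent $\gamma_0\natsum\delta$. Every exponent of $\alpha\natprod\beta_{\le}$ has the form $\gamma\natsum\delta'$ with $\gamma\le\gamma_0$ and $\delta'\le\delta$. By strict monotonicity of $\natsum$ this is $\le\gamma_0\natsum\delta$, with equality only when $\gamma=\gamma_0$ and $\delta'=\delta$. So $\alpha\natprod\beta_{\le}$ has exponents $\le\gamma_0\natsum\delta$, with coefficient exactly $a_{\gamma_0}b_\delta<a_{\gamma_0}b'_\delta$ there, which gives the strict inequality. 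Weak monotonicity, including the case $\alpha=0$, is then immediate.
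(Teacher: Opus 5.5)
Your proof is correct. The paper gives no argument of its own for this proposition; it simply cites \cite{deJonghP77}. Your Cantor-normal-form verification is therefore a self-contained substitute for that citation rather than a rival route, and it is the standard one. You treat $\natsum$ as coefficientwise addition, $\natprod$ as convolution in the monoid semiring over $(\Ord,\natsum,0)$, and prove monotonicity by a leading-term comparison. The two key steps both check out: cancelling the common high part $\beta_>$ via distributivity and strict monotonicity of $\natsum$, and observing that $\gamma_0\natsum\delta$ arises from a unique pair of exponents.

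Compared with the bare citation, your version also buys three things.
\begin{enumerate}
\item It repairs item 2, which as printed is false. With $\alpha=1$ and $\beta=\gamma=0$ the left side is $1\natprod 1=1$ while the right side is $1\natprod 0=0$. Your reading $\alpha\natprod(\beta\natsum\gamma)=(\alpha\natprod\beta)\natsum(\alpha\natprod\gamma)$ is the law the paper actually uses in the proof of \cref{lemma:ranks-upper-bound}.
\item It makes explicit that the $\natsum$ facts must be established first. The paper's very definition of $\natprod$ as a finite natural sum presupposes commutativity and associativity of $\natsum$.
\item It yields strict monotonicity of $\natsum$, which is stronger than the stated ``monotone''. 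This strictness is what the strict inequality in the proof of \cref{proposition:ranks} really requires.
\end{enumerate}

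One wording fix concerns the coefficient of $\alpha\natprod\beta'_{\le}$ at $\gamma_0\natsum\delta$. Justify the lower bound there by the fact that $\beta'_{\le}$ contains the term $\omega^\delta b'_\delta$, so the convolution sum contains the summand $a_{\gamma_0}b'_\delta$. Do not justify it by the ordinal inequality $\beta'_{\le}\ge\omega^\delta b'_\delta$, which taken literally would presuppose the monotonicity you are proving. By your own unique-pair argument, that coefficient is in fact exactly $a_{\gamma_0}b'_\delta$.
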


Note that we are using unconventional notations $\natsum$, $\natprod$ for natural arithmetic operations so as not to clash with the linear logic operations $\oplus$, $\otimes$. 

\begin{definition}
    Let us define \textbf{iterated natural product} $\alpha^{\natprod \beta}$ as follows:
    \begin{align*}
        \alpha^{\natprod 0} &\df 1;
        \\
        \alpha^{\natprod \beta} & \df \bigcup\limits_{\gamma < \beta} (\alpha^{\natprod \gamma} \natprod \alpha)
    \end{align*}
\end{definition}

\begin{proposition}[{\cite[Lemma 3.8]{deJonghP77}}]\label{proposition:iterated-natural-product}
    $(\omega^{\omega^\alpha})^{\natprod \beta} = (\omega^{\omega^\alpha})^\beta$ for any $\alpha,\beta \in \Ord$.
\end{proposition}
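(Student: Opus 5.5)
I would prove $(\omega^{\omega^\alpha})^{\natprod \beta} = (\omega^{\omega^\alpha})^\beta$ by transfinite induction on $\beta$, for fixed $\alpha$. Write $\lambda \df \omega^{\omega^\alpha}$. The key preliminary observation is that for any ordinal $\delta$, $\omega^\delta \natprod \lambda = \omega^{\delta \natsum \omega^\alpha}$ by \cref{definition:natural-sum}, and that $\delta \natsum \omega^\alpha = \delta + \omega^\alpha$. The latter holds because, in the Cantor normal form of $\delta$, every term $\omega^{\gamma_i}$ with $\gamma_i \ge \omega^\alpha$ is untouched, while terms with $\gamma_i < \omega^\alpha$ are absorbed on both sides. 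I would also check this absorption claim carefully: in $\delta + \omega^\alpha$ the small terms are absorbed, but in $\delta \natsum \omega^\alpha$ they are retained. So the identity needs refinement. It holds exactly when $\delta$ is a multiple of $\omega^\alpha$, that is, when all exponents in the Cantor normal form of $\delta$ are $\ge \omega^\alpha$. The induction should therefore carry this invariant along.

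\textbf{Strengthened claim.} For every $\beta$, $\lambda^{\natprod\beta} = \lambda^\beta = \omega^{\omega^\alpha \cdot \beta}$. The exponent $\omega^\alpha\cdot\beta$ is a multiple of $\omega^\alpha$, so all of its Cantor normal form exponents are $\ge \alpha$, and hence every additive component is $\ge \omega^\alpha$.

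\textbf{Base case, $\beta = 0$.} Both sides equal $1$.

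\textbf{Successor case, $\beta = \gamma+1$.} Here $\lambda^{\natprod\beta} = \sup_{\gamma'<\beta}(\lambda^{\natprod\gamma'}\natprod\lambda)$. By monotonicity of $\natprod$ (\cref{proposition:properties-natural-arithmetic}) and the induction hypothesis, the supremum is attained at $\gamma'=\gamma$. Applying the induction hypothesis gives $\omega^{\omega^\alpha\gamma}\natprod \omega^{\omega^\alpha} = \omega^{\omega^\alpha\gamma \natsum \omega^\alpha}$. Since $\omega^\alpha\gamma$ has all Cantor normal form terms of the form $\omega^{\epsilon}$ with $\epsilon\ge\alpha$, adding $\omega^\alpha$ just appends or increments the last coefficient. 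Hence the natural sum coincides with the ordinary sum $\omega^\alpha\gamma+\omega^\alpha = \omega^\alpha(\gamma+1)$, and we obtain $\lambda^{\gamma+1}$.

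\textbf{Limit case.} For limit $\beta$, $\lambda^{\natprod\beta} = \sup_{\gamma<\beta}\lambda^{\gamma+1} = \lambda^\beta$ by the successor computation and continuity of ordinary exponentiation.

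\textbf{Main obstacle.} The only delicate point is justifying $\xi\natsum\omega^\alpha = \xi+\omega^\alpha$ when every Cantor normal form exponent of $\xi$ is $\ge\alpha$. I would verify this directly from \cref{definition:natural-sum} by aligning exponents. The largest exponents of $\xi$ dominate, and $\omega^\alpha$ is matched with the least exponent $\gamma_{k-1}$ of $\xi$. If $\gamma_{k-1}=\alpha$, the coefficient is incremented; otherwise $\gamma_{k-1}>\alpha$ and $\omega^\alpha$ is a new final term. In either case, nothing is absorbed under ordinary addition either.
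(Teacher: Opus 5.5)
Your proof is correct. The paper gives no argument of its own here and only cites \cite[Lemma 3.8]{deJonghP77}. Your transfinite induction on $\beta$ is the natural direct verification of that cited fact.

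Two facts do the work. The first is $\omega^\delta \natprod \omega^\epsilon = \omega^{\delta \natsum \epsilon}$, which is immediate from \cref{definition:natural-sum}. The second is $\omega^\alpha\gamma \natsum \omega^\alpha = \omega^\alpha\gamma + \omega^\alpha$, which holds because every Cantor normal form exponent of $\omega^\alpha\gamma$ is $\ge \alpha$, so nothing is absorbed under ordinary addition.

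You also correctly handle the fact that the definition takes a supremum over all $\gamma' \le \gamma$ at successor stages. There you use the induction hypothesis at every $\gamma' \le \gamma$, together with monotonicity of ordinary exponentiation in the exponent and monotonicity of $\natprod$.

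One slip to clean up is in your first paragraph. The condition should say that all Cantor normal form exponents of $\delta$ are $\ge \alpha$ (equivalently, every additive component is $\ge \omega^\alpha$), not $\ge \omega^\alpha$. Your later paragraphs state this correctly. The opening claim that $\delta \natsum \omega^\alpha = \delta + \omega^\alpha$ for arbitrary $\delta$ is false, so delete it outright rather than retracting it mid-argument.
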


We do not call the above operation natural exponentiation in order not to confuse with the notion of natural exponentiation from \cite{Altman17}. There, natural exponentiation $e(\alpha,\beta)$ is defined as an operation non-decreasing w.r.t.~$\alpha$ and, for $\alpha>0$, non-decreasing w.r.t.~$\beta$, satisfying the laws:
\begin{enumerate}
    \item $e(\alpha,1)=\alpha$, 
    \item $e(\alpha,\beta \natsum \gamma) = e(\alpha,\beta) \natprod e(\alpha,\gamma)$, 
    \item $e(\alpha,\beta \natprod \gamma) = e(e(\alpha,\beta), \gamma)$.
\end{enumerate}
Iterated natural product, called \emph{super-Jacobsthal exponentiation} in \cite{Altman17}, does not satisfy these laws. In fact, \cite{Altman17} shows that natural exponentiation, as determined by the laws above, \textit{does not exist}.

\subsection{Definition of formula ranks}
We are now finally ready to present our definition of formula rank, cf.~\cref{ranks-exist}.

\begin{definition}
[Partial valuations]
    A \textbf{partial valuation} is a partial function $s : \Var \parto \Ord$.
    \begin{itemize}
        \item If $\dom s = \{x\}$ where $x \in \Var$, then we denote $s$ as $x \mapsto s(x)$. 
        \item We write $1$ to denote the valuation $s$ such that $s(x)=1$ for each $x \in \Var$.
        \item We write $s \ge s^\prime$ for two partial valuations if $s(x)\ge s^\prime(x)$ for each $x \in \dom s \cap \dom s^\prime$.
        \item If $s_1$ and $s_2$ are two partial valuations, then `$s_1,s_2$' denotes the following partial valuation $s$:
        \[
        s(x) \df 
        \begin{cases}
            s_2(x), & x \in \dom s_2
            \\
            s_1(x), & x \in \dom s_1 \setminus \dom s_2
            \\
            \text{undefined}, & \text{otherwise}
        \end{cases}
        \]
        \item If $s$ is a partial valuation and $\gamma$ is an ordinal, then $\gamma \natprod s$ is a partial valuation $s^\prime$ defined as follows:
        \[
        s^\prime(x) \df
        \begin{cases}
            \gamma \natprod s(x), & x \in \dom s
            \\
            \text{undefined}, & \text{otherwise}
        \end{cases}
        \]
    \end{itemize}
\end{definition}

\begin{definition}
[Ranks]
\label{definition:rank}
    Let $s:\Var \to \Ord$ be a total valuation function mapping variables to ordinals. 
    We define $\valuation{A}{s}$ as follows:
    \begin{itemize}
        \item $\valuation{c}{s} \df 1$ if $c$ has form $p,\neg p, 0,\top, 1, \bot$;
        \item $\valuation{x}{s} \df s(x)$ for $x\in \Var$;
        \item $\valuation{A_1 \circ A_2}{s} \df 1 \natsum \valuation{A_1}{s} \natsum \valuation{A_2}{s}$ for $\circ \in \{\vlpa, \vlor,\vlte,\vlan\}$;
        \item $\valuation{\eta^\beta x.B}{s} \df  \sup\limits_{\gamma<\beta}\left\{ \valuation{B}{s,x\mapsto \valuation{\eta^\gamma x.B}{s}} + 1 \right\}$ for $\eta \in \{\mu,\nu\}$.
    \end{itemize}
    Define $\rank(A) \df \valuation{A}{1}$. If $\Gamma = A_1,\ldots,A_n$ is a multiset of formulae, then $\rank(\Gamma) \df \rank(A_1)\natsum \ldots \natsum \rank(A_n)$.
\end{definition}

Note that, unlike the earlier proof of \cref{prf-srch-terminates}, we here calculate the rank of a sequent directly as an ordinal by taking natural sums of its formulas. 
The reason for this, again, is so that we may assign an explicit ordinal measure to each sequent, towards proving our main result, \cref{theorem:complexity-main}.

A similar rank is introduced for modal $\mu$-formulae in \cite{AlberucciKS14} in order to justify induction on such formulae. However, there is one significant difference between the rank introduced above and that from \cite{AlberucciKS14}: the latter's definition for the binary operation case involves maximum instead of natural sum, i.e. $\valuation{A_1 \circ A_2}{s}$ is defined as $\max\{\valuation{A_1}{s} , \valuation{A_2}{s}\}+1$ in \cite{AlberucciKS14}. The reason why \cref{definition:rank} involves natural sum instead is because we want each inference step of $\muMALLordcf{\alpha}$ to strictly decrease the rank of a sequent, bottom-up.
Note that, if ranks of formulae and sequents were defined using maximum, this result would not hold. Indeed, in this case, the rule for multiplicative disjunction `$\vlpa$' would have a premise and a conclusion of the same rank.

\subsection{Upper bound on ranks}
In this subsection we compute appropriate upper bounds on our notion of rank.
As a result we are finally able to establish  \cref{ranks-exist}.

\begin{lemma}
[Upper bound]
\label{lemma:ranks-upper-bound}
    We have the following:
    \begin{enumerate}
        \item\label{lemma:rub(1)} If $s\ge 1$, then $\valuation{A}{s} \ge 1$; if $s \ge s^\prime$, then $\valuation{A}{s} \ge \valuation{A}{s^\prime}$.
        \item\label{lemma:rub(2)} $\valuation{A[B/x]}{s} = \valuation{A}{s,x\mapsto \valuation{B}{s}}$.
        \item\label{lemma:rub(3)} If $s \ge 1$, $s^\prime \ge 1$ and $\gamma \ge 1$, then $\valuation{A}{s,\gamma \natprod s^\prime} \le \gamma \natprod \valuation{A}{s,s^\prime}$.
        \item\label{lemma:rub(4)} $\rank(\eta^\beta y. C) \le (\rank(C) + 1)^{\natprod \beta}$.
        \item\label{lemma:rub(5)} $\rank(A) < \omega^{\alpha^\omega}$.
    \end{enumerate}
\end{lemma}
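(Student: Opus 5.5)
The plan is to prove items (1)--(3) by induction on the structure of $A$ (for fixed points, a nested transfinite induction on $\beta$), and then to obtain (4) and (5) by direct calculation from (1)--(3) together with \cref{proposition:iterated-natural-product,proposition:omega-power}.

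For (1) I use induction on $A$. The atomic and binary cases follow from monotonicity of $\natsum$ (\cref{proposition:properties-natural-arithmetic}). For $\eta^\beta x.B$, the bound $\ge 1$ is immediate from the ``$+1$'' when $\beta>0$, which is the only case where the condition $s\ge1$ matters; for $\beta=0$ the supremum is $0$, so this clause should be read for $\beta>0$, or else stated with $\rank$ restricted accordingly. For monotonicity in $s$, I prove by induction on $\gamma$ that $\valuation{\eta^\gamma x.B}{s}\ge\valuation{\eta^\gamma x.B}{s'}$. The induction hypothesis on $B$ then applies to the extended valuations $s,x\mapsto\cdots$. Item (2) is a routine induction on $A$, with the binder case handled by transfinite induction on $\beta$ and $\alpha$-renaming so that the bound variable avoids $x$ and $\FVar(B)$.

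Item (3) is again by induction on $A$. For a variable in $\dom s'$ the two sides are equal. For a variable outside $\dom s'$ we use $\gamma\ge1$, so that $\gamma\natprod\delta\ge\delta$. For atoms we have $1\le\gamma$. For a binary connective,
\[
1\natsum\valuation{A_1}{s,\gamma\natprod s'}\natsum\valuation{A_2}{s,\gamma\natprod s'} \le 1\natsum \gamma\natprod(\valuation{A_1}{s,s'}\natsum\valuation{A_2}{s,s'}) \le \gamma\natprod(1\natsum\cdots),
\]
using distributivity and $1\le\gamma$. For $\eta^\beta x.B$ I argue by induction on $\beta'\le\beta$. If $x\in\dom s'$, the binder overrides it and I drop $x$ from $s'$. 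The induction hypothesis gives $\valuation{\eta^{\beta'}x.B}{s,\gamma\natprod s'}\le\gamma\natprod\valuation{\eta^{\beta'}x.B}{s,s'}$, and this value is placed into the valuation as $x\mapsto$, which is exactly of the form $\gamma\natprod(\text{new } s')$ after using monotonicity (1). Applying the structural hypothesis for $B$ yields each term $\le\gamma\natprod v_{\beta'}$. The remaining step is $\sup_{\beta'}(\gamma\natprod v_{\beta'}+1)\le\gamma\natprod\sup_{\beta'}(v_{\beta'}+1)$, which holds because $\gamma\natprod(v+1)=\gamma\natprod v\natsum\gamma\ge\gamma\natprod v+1$. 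I expect this interplay between the ordinary $+1$ and the natural operations to be the main delicate point.

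For (4), I show by induction on $\beta'$ that $v_{\beta'}:=\rank(\eta^{\beta'}y.C)\le(\rank(C)+1)^{\natprod\beta'}$. By (2) and (3), taking $s=1$, $s'=(y\mapsto1)$ and $\gamma=v_{\beta''}$ (the case $\gamma\ge1$ being the nontrivial one), we get $\valuation{C}{1,y\mapsto v_{\beta''}}\le v_{\beta''}\natprod\rank(C)$. Hence $v_{\beta''}\natprod\rank(C)+1\le v_{\beta''}\natprod(\rank(C)+1)$, which by the induction hypothesis is at most $(\rank(C)+1)^{\natprod\beta''}\natprod(\rank(C)+1)$. Taking the supremum over $\beta''<\beta'$ gives the definition of the iterated product. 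For (5), I induct on $A$ to show $\rank(A)<\omega^{\omega^{(\deg\alpha)n}}$ for some $n<\omega$; this suffices, since by \cref{proposition:omega-power} $\omega^{\alpha^\omega}=\sup_n\omega^{\omega^{(\deg\alpha)n}}$. Binary connectives stay below these bounds by \cref{proposition:properties-natural-arithmetic}(\ref{item:natsum-omega^gamma}). For fixed points, $\rank(C)+1<\omega^{\omega^{\delta}}$ with $\delta=(\deg\alpha)n$, so (4) and \cref{proposition:iterated-natural-product} give $\rank\le(\omega^{\omega^\delta})^{\beta}\le(\omega^{\omega^\delta})^{\alpha}=\omega^{\omega^\delta\alpha}$. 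Since $\alpha<\omega^{\deg\alpha+1}$, we get $\omega^\delta\alpha<\omega^{\delta+\deg\alpha+1}\le\omega^{(\deg\alpha)(n+2)}$, using $\deg\alpha\ge1$. This closes the induction.
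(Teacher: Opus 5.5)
Your proposal is correct and follows essentially the same route as the paper. Items (1)--(3) go by induction on $A$, with an inner transfinite induction on the fixed-point index; the fixed-point case of (3) uses monotonicity, both induction hypotheses and $\gamma\natprod v+1\le\gamma\natprod(v+1)$. Item (4) instantiates (3) with $s=1$ and $s'=(y\mapsto 1)$, and (5) follows from (4), \cref{proposition:iterated-natural-product} and $\deg\alpha\ge 1$.

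Your observation that $\valuation{\eta^0 x.B}{s}=0$ is a real refinement that the paper glosses over. It falsifies the first clause of (1) and the paper's use of (3) at index $0$ in (4); both are repaired as you indicate. For the same reason, the hypotheses $s,s'\ge 1$ in (3) should simply be dropped, since they are never used and the extended valuation in the fixed-point step assigns $0$ at index $0$.
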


Before giving the proof let us show how this result can be used to 
establish \cref{ranks-exist}.
In fact we shall prove a stronger statement, subsuming also \cref{prf-srch-terminates}:

\begin{proposition}\label{proposition:ranks}
 For each inference step of $\muMALLordcf \alpha$, the rank of its conclusion is greater than any of the ranks of its premises.
\end{proposition}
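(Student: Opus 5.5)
We argue by cases on the inference step of $\muMALLordcf\alpha$. The rank of a sequent is the natural sum of the ranks of its formulas, and $\natsum$ is strictly monotone in each argument: if $\delta<\delta'$ then $\rho\natsum\delta<\rho\natsum\delta'$, which is immediate from \cref{definition:natural-sum}. So it suffices to compare the principal formula with its auxiliary formulas in each premiss, since the side context $\Gamma$ contributes the same summand $\rk\Gamma$ on both sides.

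The zero-premiss steps ($\identity$, $1$, $\top$) are vacuous. For the $\bot$ step, the premiss $\Gamma$ has rank $\rk\Gamma<\rk\Gamma\natsum 1=\rk{\Gamma,\bot}$. For $\vlpa$ we need $\rk A\natsum\rk B<1\natsum\rk A\natsum\rk B=\rk{A\vlpa B}$, which holds by the definition of rank and strict monotonicity. For $\vlor$ and $\vlan$ the premiss contributes only $\rk{A_i}$, which is at most $\rk{A_0}\natsum\rk{A_1}$ and so strictly below $\rk{A_0\star A_1}$. For $\vlte$ each premiss has rank $\rk{\Gamma}\natsum\rk{A}$, which is strictly below $\rk\Gamma\natsum\rk\Delta\natsum 1\natsum\rk A\natsum\rk B$ because the extra summand is at least $1$. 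Note here that $\rk\Delta$ could be $0$ only if $\Delta$ is empty; the summand $1$ already secures strictness regardless.

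The main case is the fixed-point rules, where we must show $\rk{A(\eta^\gamma x A(x))}<\rk{\eta^\beta x A(x)}$ for $\gamma<\beta$. Take $s=1$. By \cref{lemma:ranks-upper-bound}(\ref{lemma:rub(2)}),
\[
\valuation{A(\eta^\gamma xA(x))}{1}=\valuation{A}{1,x\mapsto\valuation{\eta^\gamma xA}{1}}.
\]
By the definition of rank this is one of the terms of the supremum defining $\valuation{\eta^\beta xA}{1}$ before the $+1$ is added. It is therefore strictly below $\valuation{A}{1,x\mapsto\valuation{\eta^\gamma xA}{1}}+1\le\valuation{\eta^\beta xA}{1}$.

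Two subtleties need care. First, in $A$ the bound variable $x$ must be handled with the valuation override as in the definition, so the substitution lemma must hold for closed formulas or for formulas with other free variables valued at $1$; alpha-renaming ensures there is no capture. Second, since we identify $\mu$ with $\muord\alpha$, the same computation applies with $\beta=\alpha$. The key step is the substitution lemma \cref{lemma:ranks-upper-bound}(\ref{lemma:rub(2)}), which we are allowed to assume. With it, the fixed-point case is really just an unfolding of the definition.
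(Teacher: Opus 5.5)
Your proof is correct and follows the paper's argument. The paper likewise treats the fixed-point rules as the only nontrivial case: there the auxiliary formula's rank, rewritten via the substitution property \cref{lemma:ranks-upper-bound}(\ref{lemma:rub(2)}), is one of the terms whose successors are bounded by $\valuation{\eta^\beta x.B}{1}$, and strict monotonicity of $\natsum$ over the context $\rank(\Gamma)$ finishes it; you simply also write out the propositional cases that the paper leaves implicit.
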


Note that this indeed yields in particular \cref{ranks-exist} by identifying $\rk A$ with the rank of the singleton sequent $A$.

\begin{proof}
    The only nontrivial part concerns the rules
    \[
    \vlinf{\muord \beta }{\gamma <\beta}{\Gamma, \muord \beta x. B}{\Gamma, B[\muord \gamma x. B/x]}
    \qquad
    \vlinf{\nuord \beta}{}{\Gamma, \nuord \beta x . B}{ \{ \Gamma, B[\nuord \gamma x . B/x] \}_{\gamma < \beta}  }
    \]
    The following computation (for $\eta \in \{\mu,\nu\}$ and $\gamma<\beta$) proves the statement:
    \begin{align*}
        \rank(\Gamma, \eta^\beta x. B) 
        & = \rank(\Gamma) \natsum \rank(\eta^\beta x. B) 
        \\
        & > \rank(\Gamma) \natsum \valuation{B}{1, x \mapsto \rank(\eta^\gamma x. B)} 
        && \text{\cref{definition:rank}}
        \\
        & = \rank(\Gamma) \natsum \rank(B[\eta^\gamma x. B/x]) 
        && \text{\cref{lemma:ranks-upper-bound}(2)}
        \\
        & = \rank(\Gamma, B[\eta^\gamma x. B/x])
    \end{align*}
    (Note that the left-hand side of each of the relations `$=$' and `$>$' above is the one immediately above the right-hand side.) 
\end{proof}

Now let us prove the Lemma as required:

\begin{proof}
[Proof of \cref{lemma:ranks-upper-bound}]
    \cref{lemma:rub(1)} is proved by a straightforward induction.
    
    \cref{lemma:rub(2)} is proved by double induction on the size of $A$ and on $\beta$ if $A = \eta^\beta y. C$. Let us consider the latter case, which is the only nontrivial one. We assume that $y \notin \FVar(B)$. Let $s^\prime(x) \df \valuation{B}{s}$ and $s^\prime(z) \df s(z)$ for $z \ne x$, $z \in \Var$.
    \begin{align*}
        \valuation{\eta^\beta y. C[B/x]}{s} 
        & = 
        \sup\limits_{\gamma<\beta}\left\{\valuation{C[B/x]}{s,y\mapsto \valuation{\eta^\gamma y. C[B/x]}{s}} +1 \right\}
        && \text{\cref{definition:rank}}
        \\
        & = 
        \sup\limits_{\gamma<\beta}\left\{\valuation{C}{s^\prime, y\mapsto \valuation{\eta^\gamma y. C[B/x]}{s}} +1 \right\}
        && \text{I.H.}
        \\
        & = 
        \sup\limits_{\gamma<\beta}\left\{\valuation{C}{s^\prime, y\mapsto \valuation{\eta^\gamma y. C}{s^\prime}} +1 \right\}
        && \text{I.H.}
        \\
        & =
        \valuation{\eta^\beta y. C}{s^\prime}
    \end{align*}
    \cref{lemma:rub(3)} is proved by induction on the same parameters. If $A$ is a logical constant or a literal, then $\valuation{A}{s, \gamma \natprod s^\prime} = 1 \le \gamma = \gamma \natprod \valuation{A}{s, s^\prime}$. If $A \notin \dom s^\prime$ is a variable, then $\valuation{A}{s,\gamma \natprod s^\prime} = \valuation{A}{s,s^\prime} \le \gamma \natprod \valuation{A}{s,s^\prime}$. If $A \in \dom s^\prime$, then $\valuation{A}{s, \gamma \natprod s^\prime} = \gamma \natprod s^\prime(A) = \gamma \natprod \valuation{A}{s, s^\prime}$.
    
    If $A = A_1 \circ A_2$, then 
    \begin{align*}
        \valuation{A_1 \circ A_2}{s,\gamma \natprod s^\prime}
        & =
        1 \natsum
        \valuation{A_1}{s,\gamma \natprod s^\prime} \natsum \valuation{A_2}{s,\gamma \natprod s^\prime}
        && \text{\cref{definition:rank}}
        \\
        & \le
        \gamma \natsum
        \valuation{A_1}{s,\gamma \natprod s^\prime} \natsum \valuation{A_2}{s,\gamma \natprod s^\prime}
        && \gamma \ge 1
        \\
        & \le 
        (\gamma \natprod 1) \natsum
        (\gamma \natprod \valuation{A_1}{s,s^\prime}) \natsum (\gamma \natprod \valuation{A_2}{s,s^\prime})
        &&
        \text{I.H.}
        \\
        &
        =
        \gamma \natprod (1 \natsum
        \valuation{A_1}{s,s^\prime} \natsum \valuation{A_2}{s,s^\prime})
        &&
        \text{\cref{proposition:properties-natural-arithmetic}}
        \\
        &
        =
        \gamma \natprod \valuation{A_1 \circ A_2}{s,s^\prime}
        &&
    \end{align*}
    
    Finally, let $A = \eta^\beta z. C$. Then
    \begin{align*}
        \valuation{\eta^\beta z. C}{s,\gamma \natprod s^\prime}
        & =
        \sup\limits_{\delta<\beta}\left\{\valuation{C}{s,\gamma \natprod s^\prime, z \mapsto \valuation{\eta^\delta z. C}{s,\gamma \natprod s^\prime}} +1 \right\}
        &&
        \\
        & \le
        \sup\limits_{\delta<\beta}\left\{\valuation{C}{s,\gamma \natprod s^\prime, z \mapsto \gamma \natprod \valuation{\eta^\delta z. C}{s, s^\prime}} +1 \right\}
        &&
        \text{I.H.}
        \\
        & \le
        \sup\limits_{\delta<\beta}\left\{ \gamma \natprod \valuation{C}{s,s^\prime, z \mapsto \valuation{\eta^\delta z. C}{s,s^\prime}} +1 \right\}
        &&
        \text{I.H.}
        \\
        & \le 
        \sup\limits_{\delta<\beta} \left\{ \gamma \natprod \left(\valuation{C}{s,s^\prime, z \mapsto \valuation{\eta^\delta z. C}{s,s^\prime}} +1 \right)\right\}
        && \gamma \ge 1
        \\
        & \le
        \gamma \natprod \sup\limits_{\delta<\beta}\left\{ \valuation{C}{s,s^\prime, z \mapsto \valuation{\eta^\delta z. C}{s,s^\prime}} +1 \right\}
        &&
        \\ & =
        \gamma \natprod \valuation{\eta^\beta z. C}{s, s^\prime}
    \end{align*}
    \cref{lemma:rub(4)} is proved by induction on $\beta$. For the base case, $\beta=0$, we have $\rank(\eta^0 y. C) = 1 = (\rank(C)+1)^{\natprod 0 }$.

    The induction step is proved as follows:
    \begin{align*}
        \rank(\eta^\beta y. C)
        & =
        \sup\limits_{\gamma<\beta}\left\{ \valuation{C}{1,y\mapsto \valuation{\eta^\gamma y.C}{1}} + 1 \right\}
        \\
        & \le 
        \sup\limits_{\gamma<\beta}\left\{ \valuation{\eta^\gamma y.C}{1} \natprod \valuation{C}{1} + 1 \right\}
        &&
        \text{\cref{lemma:rub(3)}}
        \\
        & \le
        \sup\limits_{\gamma<\beta}\left\{ (\rank(C) + 1)^{\natprod \gamma} \natprod \rank(C) + 1 \right\}
        &&
        \text{I.H.}
        \\
        & \le
        \sup\limits_{\gamma<\beta}\left\{ (\rank(C) + 1)^{\natprod \gamma} \natprod (\rank(C) + 1) \right\}
        \\
        & = 
        \sup\limits_{\gamma<\beta}\left\{ (\rank(C) + 1)^{\natprod \gamma+1} \right\}
        \\
        & = 
        (\rank(C) + 1)^{\natprod \beta}
    \end{align*}

    Finally, \cref{lemma:rub(5)} is proved by induction on the size of $A$. The cases where $A$ is a variable, a literal, or a logical constant are trivial since $1 < \omega^{\alpha^\omega}$. The case $A  = A_1 \circ A_2$ follows from \cref{item:natsum-omega^gamma} of \cref{proposition:properties-natural-arithmetic}. 
    Let us consider the case $A = \eta^\beta z. C$. By the induction hypothesis, $\rank(C) < \omega^{\alpha^\omega} = \omega^{\omega^{(\deg \alpha) \omega}}$, hence $\rank(C) < \omega^{\omega^{(\deg \alpha) m}}$ for some $m < \omega$. Now, the reasoning proceeds as follows:
    \begin{align*}
        \rank(\eta^\beta z. C) 
        & \le 
        (\rank(C)+1)^{\natprod \beta}
        && \text{\cref{lemma:rub(4)}} 
        \\
        & \le
        (\rank(C)+1)^{\natprod \alpha}
        && \beta \le \alpha
        \\
        & \le
        \left(\omega^{\omega^{(\deg \alpha) m}}\right)^{\natprod \alpha}
        \\
        & =
        \omega^{\omega^{(\deg \alpha) m}\alpha}
        &&
        \text{\cref{proposition:iterated-natural-product}}
        \\
        & \le
        \omega^{\omega^{(\deg \alpha) m}\omega^{\deg \alpha + 1}}
        \\
        & \le
        \omega^{\omega^{(\deg \alpha) (m+2)}} < \omega^{\omega^{(\deg \alpha)\omega}} = \omega^{\alpha^\omega} \qedhere
    \end{align*}
\end{proof}

\subsection{Lower bound on ranks}

Now, let us establish lower bounds on ranks. It turns out that the bound from \cref{lemma:ranks-upper-bound} is tight. To show this, let us fix a sequence of variables $(x_i)_{i<\omega}$ and define formulae $\Resource_n$ as follows.
\begin{definition}\label{definition:rho-formula}
    Define the formulas $R_n $, for $n<\omega$ inductively by:
    \[
    \begin{array}{r@{\ \df \ }l}
        \Resource_0  & x_0 \\
        \Resource_{n+1} & \mu^\alpha x_{n}.\left(x_{n+1} \vlpa \Resource_n\right)
    \end{array}
    \]
    %
\end{definition}

\begin{lemma}\label{lemma:rho-ranks-lower-bound}
    $\valuation{\Resource_n}{x_n\mapsto \gamma} \ge \gamma \cdot f(n)$ where $f(0)=1$ and $f(n+1) = \omega^{\omega^{(\deg \alpha) n}}$.
\end{lemma}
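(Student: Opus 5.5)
The proof is by induction on $n$, where the valuation is understood to assign $1$ to every variable other than $x_n$; by \cref{lemma:ranks-upper-bound}(1), enlarging the valuation elsewhere only helps. The base case is immediate: $\valuation{\Resource_0}{x_0\mapsto\gamma} = \gamma = \gamma\cdot f(0)$.

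For the inductive step, fix $\gamma$ and put $s := (x_{n+1}\mapsto\gamma)$. For $\delta\le\alpha$ define the approximant ranks
\[ a_\delta := \valuation{\mu^\delta x_n.(x_{n+1}\vlpa\Resource_n)}{s}. \]
Unfolding \cref{definition:rank} and applying the induction hypothesis to $\Resource_n$ under $x_n\mapsto a_\epsilon$ gives, for $\epsilon<\delta$,
\[ a_\delta \ \ge\ 1\natsum\gamma\natsum \valuation{\Resource_n}{s,x_n\mapsto a_\epsilon} + 1 \ \ge\ \gamma\natsum a_\epsilon\cdot f(n). \]
This step relies on $x_{n+1}$ not being free in $\Resource_n$, so that its value is irrelevant there, and on monotonicity. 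Since natural sum dominates ordinary sum, this yields the recurrence
\[ a_{\epsilon+1}\ \ge\ a_\epsilon\cdot f(n) + \gamma, \qquad a_\lambda = \sup_{\epsilon<\lambda} a_\epsilon \ \text{ for limit } \lambda, \qquad a_1 \ge \gamma. \]

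From the recurrence I will show by transfinite induction on $\epsilon$ that $a_{1+\epsilon}\ge \gamma\cdot f(n)^{\epsilon}$ for $n\ge1$. The successor step gives $a_{1+\epsilon+1} \ge \gamma f(n)^\epsilon f(n)$, and the limit step follows by continuity of $\epsilon\mapsto\gamma\cdot f(n)^\epsilon$. For $n=0$, where $f(0)=1$, I will instead show $a_{\epsilon}\ge\gamma\cdot\epsilon$ from $a_{\epsilon+1}\ge a_\epsilon+\gamma$. Taking $\delta=\alpha$ and using $1+\alpha=\alpha$ (as $\alpha\ge\omega$):
\begin{itemize}
\item For $n=0$: $\valuation{\Resource_1}{x_1\mapsto\gamma} = a_\alpha \ge \gamma\cdot\alpha \ge \gamma\cdot\omega = \gamma\cdot f(1)$.
\item For $n\ge1$: $a_\alpha\ge \gamma\cdot f(n)^\alpha$.
\end{itemize}

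The remaining step is the ordinal computation $f(n)^\alpha \ge f(n+1)$, which I expect to be the main (if modest) obstacle because of the bookkeeping with degrees. Write $d=\deg\alpha$, so $\alpha\ge\omega^{d}$. Then
\[ f(n)^\alpha = \omega^{\omega^{d(n-1)}\cdot\alpha} \ge \omega^{\omega^{d(n-1)}\cdot\omega^{d}} = \omega^{\omega^{d(n-1)+d}} = \omega^{\omega^{dn}} = f(n+1). \]
The only care needed is to use ordinary rather than natural products when passing from the recurrence to powers, which is justified since $\beta\natprod\beta'\ge\beta\cdot\beta'$ and $\beta\natsum\beta'\ge\beta+\beta'$.
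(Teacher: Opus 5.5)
Your proof is correct and follows essentially the same route as the paper: induction on $n$, a successor recurrence of the form $\ge \gamma \natsum (\,\cdot\,)\cdot f(n)$ for the approximants obtained from the induction hypothesis, transfinite induction yielding $\gamma\cdot f(n)^{\beta}$, and a final degree computation. The differences are only bookkeeping. You track the approximant ranks $a_\delta$ (the paper's $p_\beta$) and evaluate directly at $\delta=\alpha$ using $\alpha\ge\omega^{\deg\alpha}$. The paper instead tracks the body ranks $o_\beta$ and takes a supremum over $\omega\le\beta<\omega^{\deg\alpha}$.
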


\begin{proof}
    By induction on $n$. The case $n=0$ is trivial, so let us prove the induction step right away. Observe that,
    \[
        \valuation{\Resource_{n+1}}{x_{n+1}\mapsto \gamma} 
         = \valuation{\muord \alpha x_{n}.\left(x_{n+1} \vlpa \Resource_n\right)}{x_{n+1}\mapsto \gamma}
         = \sup\limits_{\beta<\alpha} (o_\beta+1)
    \]
    where,
    \begin{align*}
        o_\beta &= \valuation{x_{n+1} \vlpa \Resource_n}{x_{n+1}\mapsto \gamma,x_n \mapsto p_\beta} \\
        p_\beta &= \valuation{\muord \beta x_{n}.\left(x_{n+1} \vlpa \Resource_n\right)}{x_{n+1}\mapsto \gamma}
    \end{align*}
    Let us establish lower bounds on $o_\beta$. First, $o_1 \ge \valuation{x_{n+1}}{x_{n+1}\mapsto \gamma} = \gamma$. Secondly,
    \begin{align*}
        o_{\beta+1} 
        &= \valuation{x_{n+1} \vlpa \Resource_n}{x_{n+1}\mapsto \gamma, x_n \mapsto p_{\beta+1}} \\
        & = 1 \natsum \gamma \natsum \valuation{\Resource_n}{x_n \mapsto p_{\beta+1}} \\
        & \ge \gamma \natsum (p_{\beta+1} \cdot f(n)); 
        && \text{I.H.} \\
        p_{\beta+1} 
        & = \valuation{\muord {\beta+1} x_{n}.\left(x_{n+1} \vlpa \Resource_n\right)}{x_{n+1}\mapsto \gamma} \\
        & \ge o_\beta + 1.
    \end{align*}
     Combining the inequalities, we obtain that $o_{\beta+1} \ge \gamma \natsum (o_\beta \cdot f(n))$.
    Finally, notice that $o_\beta \ge \sup\limits_{\beta^\prime < \beta} o_{\beta^\prime}$ for limit $\beta$. Using all this, we deduce the following.
    \begin{itemize}
        \item If $n=0$, then $o_{\beta+1} \ge \gamma \natsum o_\beta \ge  o_\beta + \gamma$, hence $o_\beta \ge \gamma \cdot \beta$ and
        \[
        \valuation{\Resource_{n+1}}{x_{n+1}\mapsto \gamma} = \sup\limits_{\beta < \alpha} (o_\beta+1) \ge \sup\limits_{\beta < \alpha} \gamma \cdot \beta = \gamma \cdot \alpha \ge \gamma \cdot \omega = \gamma \cdot f(1).
        \]
        
        \item If $n>0$, then $o_{\beta+1} \ge \gamma \natsum (o_\beta \cdot f(n)) \ge o_\beta \cdot f(n)$, which implies that $o_\beta \ge \gamma \cdot f(n)^{\beta-1}$ for $1 \le \beta < \omega$ and $o_\beta \ge \gamma \cdot f(n)^{\beta}$ for $\beta \ge \omega$. Therefore,
        \begin{align*}
            \valuation{\Resource_{n+1}}{x_{n+1}\mapsto \gamma} 
            & = \sup\limits_{\beta < \alpha} (o_\beta+1)
            \\
            & \ge \sup\limits_{\omega \le \beta < \alpha} \gamma \cdot f(n)^{\beta} \\ 
            & = \sup\limits_{\omega \le \beta < \alpha} \gamma \cdot \omega^{\omega^{(\deg \alpha) (n-1)}\beta}
            \\
            & \ge \sup\limits_{\omega \le \beta < \omega^{\deg \alpha}} \gamma \cdot \omega^{\omega^{(\deg \alpha) (n-1)}\beta}
            \\
            & = \gamma \cdot \omega^{\omega^{(\deg \alpha) n}} = \gamma \cdot f(n+1)
            && \text{$\omega^{\deg \alpha}$ is limit} \qedhere
        \end{align*}
    \end{itemize}
\end{proof}

\begin{corollary}
    $\sup\limits_{n<\omega} \rank(\Resource_n) = \omega^{\alpha^\omega}$.
\end{corollary}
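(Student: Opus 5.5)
The corollary asserts an equality, so I will prove two inequalities, each from an earlier result.

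For the upper bound $\sup_{n<\omega}\rank(\Resource_n)\le\omega^{\alpha^\omega}$, I apply \cref{lemma:ranks-upper-bound}(\ref{lemma:rub(5)}) to each $\Resource_n$, which gives $\rank(\Resource_n)<\omega^{\alpha^\omega}$. There is one subtlety: $\Resource_n$ has the free variable $x_n$. I would check that the proof of item (\ref{lemma:rub(5)}) does not depend on closedness, since variables receive rank $1$ under the valuation $1$.

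For the lower bound I use \cref{lemma:rho-ranks-lower-bound} with $\gamma=1$. Since $\rank(\Resource_n)=\valuation{\Resource_n}{1}$ and $1$ is the valuation that also sends $x_n\mapsto 1$, the lemma yields $\rank(\Resource_{n+1})\ge 1\cdot f(n+1)=\omega^{\omega^{(\deg\alpha)n}}$. Taking the supremum over $n$ gives
\[
\sup_n \omega^{\omega^{(\deg\alpha) n}} = \omega^{\omega^{(\deg\alpha)\omega}},
\]
by continuity of $\beta\mapsto\omega^\beta$ and $\beta\mapsto(\deg\alpha)\cdot\beta$ at the limit $\omega$. By \cref{proposition:omega-power}, $\omega^{\omega^{(\deg\alpha)\omega}}=\omega^{\alpha^\omega}$, using the hypothesis $\alpha\ge\omega$.

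Combining the two inequalities gives the equality. The only step needing real care is the identification $\valuation{\cdot}{x_n\mapsto 1}=\valuation{\cdot}{1}$ for these formulas. This holds because the valuation only matters on free variables, and $x_n$ is the only free variable of $\Resource_n$; I would verify this by a routine induction on formulas.
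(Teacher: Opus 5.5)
Your proof is correct and follows essentially the same route as the paper. The lower bound comes from \cref{lemma:rho-ranks-lower-bound} at $\gamma=1$, continuity of the supremum, and \cref{proposition:omega-power}; the upper bound comes from \cref{lemma:ranks-upper-bound}(\ref{lemma:rub(5)}). Your remark that valuations depend only on free variables makes explicit an identification the paper uses silently.
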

\begin{proof}
    $\sup\limits_{n<\omega} \rank(\Resource_n) \ge \sup\limits_{n<\omega} f(n) = \sup\limits_{n<\omega} \omega^{\omega^{(\deg \alpha) (n-1)}} = \omega^{\omega^{(\deg \alpha) \omega}} = \omega^{\alpha^\omega}$. The other inequality follows trivially from \cref{lemma:ranks-upper-bound}.
\end{proof}

\section{Upper bounds on provability in $\muMALLord \alpha$}
\label{sec:upper-bd}

The proof of the upper bound for Theorem \ref{theorem:complexity-main} relies on a general fact about inductive definitions proved e.g.~in \cite[Theorem 6.2]{KuznetsovS22}, which we present below. 
\begin{definition}
[Positive formulas]
    Let $\Phi(\UniNumA,\UniSetA)$ be an arithmetical formula with a free variable $\UniNumA$ and a fresh unary predicate $\UniSetA$.
    $\Phi(\UniNumA,\UniSetA)$ is \textbf{positive} if $\UniSetA$ does not occur under the scope of a negation. 
\end{definition}
We write $\mathbb{N} \vDash \varphi$ if a formula $\varphi$ is true in the standard model of arithmetic. For $S \subseteq \mathbb N$ define:
\[
[\Phi](S) \ \df \ \{n \in \omega \mid \mathbb{N} \vDash \Phi(n,S) \}
\qquad
[\Phi]^{\beta}
\ \df \ 
\bigcup\limits_{\gamma<\beta} [\Phi]([\Phi]^{\gamma})
\]

\begin{theorem}[\cite{KuznetsovS22}]\label{theorem:reducibility-inductive-definitions}
    Let $b \in \KleeneO$ be a notation of a limit ordinal $\beta = \ordO{b}$. Then, $[\Phi]^{\beta}$ is computably enumerable in $H(b)$.
\end{theorem}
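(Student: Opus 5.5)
We prove the theorem by induction on notations $c \lessO b$ (together with $c=b$), establishing the stronger claim that for every $c \in \KleeneO$ with $c \lessO b$, the set $[\Phi]^{\ordO{c}}$ is computable in $H(c')$ for a suitable $c'$ with $\ordO{c'} \le \ordO{c}+1$, uniformly in $c$. Concretely, we aim to show that $[\Phi]^{\ordO{c}+1} = [\Phi]([\Phi]^{\ordO{c}}) \cup [\Phi]^{\ordO{c}}$ is c.e.\ in $H(2^{c})$ uniformly. The statement for the limit $b = 3\cdot 5^e$ then follows, since $[\Phi]^\beta = \bigcup_{c \lessO b}[\Phi]^{\ordO{c}}$ and $H(b)$ uniformly encodes every $H(c)$ with $c \lessO b$.

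The first key step is an arithmetical analysis. Since $\Phi$ is arithmetical, say of complexity $\Sigma^0_k$ in the oracle $U$, the set $[\Phi](S)$ is c.e.\ in $S^{(k)}$, the $k$-th jump of $S$, uniformly in an index for $S$. This is crude, and the $k$ extra jumps must be absorbed. To do so we will work only along limit-indexed stages, or else use that $\omega$ steps of the induction require $\omega\cdot k$ jumps, which is still below $\omega$ times anything. Accordingly, we refine the claim: for limit $\lambda = \ordO{\ell} < \beta$ and finite $m$, the set $[\Phi]^{\lambda+m}$ is computable in $H$ of a notation for $\lambda + (k+1)m + 1$, uniformly. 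This is proved by an inner induction on $m$, using the jump at each step. It is legitimate because $\lambda + (k+1)m + 1 < \lambda+\omega \le \beta$, as $\beta$ is a limit.

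The second step handles limit stages. If $\lambda$ is a limit then $[\Phi]^\lambda = \bigcup_{\gamma<\lambda}[\Phi]^\gamma$. We must show that this union is computable in $H(\ell)$, not merely c.e., since the next step applies $\Phi$, which may use $U$ under quantifiers. Using the effective union definition of $H(3\cdot5^{e'})$, the union is c.e.\ in $H(\ell)$, hence computable in $H(2^\ell)$, which costs one jump. That jump is again absorbed by the finite slack before the next limit.

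To make the recursion uniform, we use the effective transfinite recursion theorem on $\lessO$, which produces indices computing each $[\Phi]^\gamma$ from the appropriate $H$-set. We then invoke Spector's theorem to move between different notations of the same ordinal.

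The main obstacle is the bookkeeping of jumps against ordinals: we must ensure that the finitely many extra jumps accumulated within each block $[\lambda,\lambda+\omega)$ never overflow past $\beta$, and that the uniformity survives the switches between notations. If this becomes messy, the fallback is to cite the result directly, as the paper does, namely \cite[Theorem 6.2]{KuznetsovS22}.
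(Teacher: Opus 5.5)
Your argument is correct, and it is essentially a rigorous version of the heuristic the paper gives (each application of $\Phi$ costs finitely many jumps, which are absorbed because $k\cdot\beta=\beta$ for limit $\beta$). The paper does not prove the statement itself but takes it from \cite[Theorem 6.2]{KuznetsovS22}, and your effective transfinite recursion with the block-wise bound $\lambda+(k+1)m+1$ is the right way to make that heuristic precise.

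Two small corrections. First, your opening claim ($|c'|\le|c|+1$, c.e.\ in $H(2^c)$) is false in general, since already $[\Phi]^1=[\Phi](\emptyset)$ may be $\Sigma^0_k$-complete; drop it in favour of the refined claim. Second, Spector's theorem is not needed. Because $\beta$ is a limit, the path $\{x : x\lessO b\}$ is closed under $x\mapsto 2^x$, so every notation your recursion uses can be chosen $\lessO b$ and read off as a section of $H(b)$. This also settles the uniformity worry you raise.
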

At the level of informal intuition, $[\Phi]^{\beta}$ can be thought of as a set defined by the informal predicate: 
\[
\underbrace{
\Phi(\cdot,\{n \mid \Phi(n,\{n \mid \Phi(n,\ldots \{n \mid \Phi(n,\emptyset)\} \ldots )\})\})
}_{\text{``$\beta$ times''}}
\]
If $\Phi$ in prenex normal form has $k$ quantifier alternations, then the above informal predicate has ``$k \cdot \beta$ quantifier alternations,'' and since $\beta$ is limit, $k \cdot \beta = \beta$. 
Again informally, this gives us a set at the $\beta$-th level of hyperarithmetical hierarchy. This is of course nothing but a high-level metaphor behind the proof of \cref{theorem:reducibility-inductive-definitions}, which is duly formalized in \cite{KuznetsovS22}. \anupam{the remainder of this section could probably be only briefly summarised for the conference submission.}

Fix some G\"odel numbering of $\muMALLord \alpha$ sequents, mapping a sequent $\Gamma$ to its G\"odel number $\sharp \Gamma \in \omega$. We need to assign G\"odel numbers, in particular, to all formulae of the form $\muord \beta x C$ for $\beta < \alpha$. This is done by fixing $a \in \KleeneO$ such that $\ordO{a} = \alpha$ and identifying $\beta$ with the unique $b \lessO a$ such that $\ordO{b} = \beta$. 

Let $\mathcal{D}^\prime(\UniNumA,\UniSetA)$ be the predicate `$\UniNumA = \sharp \Gamma$ for some $\Gamma$ and $\Gamma$ is the conclusion of a $\muMALLordcf \alpha$ step such that, for each its premisses $\Delta$, already $\sharp \Delta \in \UniSetA$.' 
It is routine to show that $\mathcal{D}^\prime(\UniNumA,\UniSetA)$ can be expressed by a positive formula. For example, given the rule
\[
\vlinf{\nuord \beta}{}{\Gamma, \nuord \beta x . B}{ \{ \Gamma, B[\nuord \gamma x . B/x] \}_{\gamma < \beta}  }
\]
consider the sets
\begin{align*}
    P & \df \{(b, \sharp(\Gamma, \nuord \beta x . B)) \mid \ordO{b} = \beta, \ b \le_{\KleeneO} a \} \\
    Q & \df \{(b, c, \sharp(\Gamma, \nuord \beta x . B), \sharp(\Gamma, B[\nuord \gamma x . B/x])) \mid \ordO{b} = \beta, \ \ordO{c} = \gamma, \ c \lessO b \le_{\KleeneO} a \}. 
\end{align*}
They both are recursively enumerable, hence definable by $\Sigma^0_1$-formulae. Define:
\[
\mathcal{D}^\prime_{\nu}(\UniNumA,\UniSetA) \df \exists b \ (b,\UniNumA) \in P \mathrel{\&} \forall c (c \lessO b) \to \exists y \big((b, c, \UniNumA, y) \in P \mathrel{\&} y \in \UniSetA\big)
\]
This formula is positive. It specifies that $\UniNumA = \sharp(\Gamma, \nuord \beta x . B)$ and, for each $\gamma<\beta$, $\sharp(\Gamma, B[\nuord \gamma x . B/x]) \in \UniSetA$. Introducing similar formulae for other rules and taking their disjunction yields a desired positive formula representing $\mathcal{D}^\prime(\UniNumA,\UniSetA)$. 

Define $\mathcal{D}(\UniNumA,\UniSetA) \df (\UniNumA \in \UniSetA) \vee \mathcal{D}^\prime(\UniNumA,\UniSetA)$.\anupam{Why do you need to add the left disjunct? Usually you do not need to do such `inflation'.} 
By induction on $\rk \Gamma$, it is easy to prove that $\Gamma$ is provable in $\muMALLordcf \alpha$ iff $\sharp\Gamma \in [\mathcal{D}]^{\rank(\Gamma)}$. This follows directly from Proposition \ref{proposition:ranks}. 
Therefore, if $b$ is a notation of $\beta = \omega^{\alpha^\omega}$, then $[\mathcal{D}]^\beta = \{\sharp \Gamma \mid \muMALLordcf \alpha \vdash \Gamma\}$. 
This is exactly the set of notations of theorems of $\muMALLord\alpha$, by \cref{cut-elim-mumall}; so let us denote it by $\mathrm{Th}_\alpha = \{\sharp \Gamma \mid \muMALLord \alpha \vdash \Gamma\}$.
Note that $\mathrm{Th}_\alpha = [\mathcal{D}^\prime](\mathrm{Th}_\alpha)$ because each theorem is obtained by some rule application from other theorems.\anupam{perhaps the syntax/semantics distinction is overly formal here. We can just speak of positive operators on $\mathbb N$ once defined, and we can simply identify $\muMALLord\alpha$ with $\mathrm{Th}_\alpha$.}

By \cref{theorem:reducibility-inductive-definitions}, $\mathrm{Th}_\alpha$ is computably enumerable w.r.t. $H(b)$.
What we need is to show that $\mathrm{Th}_\alpha$ is computable in $H(b)$, which is equivalent to $\mathrm{Th}_\alpha \le_\Tred H(b)$. It remains to show that the complement of $\mathrm{Th}_\alpha$ is computably enumerable in $H(b)$. Define the predicate $\overline{\mathcal{D}}(\UniNumA,\UniSetA) \df (\UniNumA \in \UniSetA) \vee \lnot \mathcal{D}^\prime(\UniNumA,\omega \setminus \UniSetA)$, which is also expressible by an positive formula.
\begin{proposition}\label{proposition:bar-D}
    We have the following:
    \begin{enumerate}
        \item\label{item:dual-approxs-disjoint-from-theory} For each $\gamma \in \Ord$, $[\overline{\mathcal{D}}]^\gamma \cap \mathrm{Th}_\alpha = \emptyset$. 
        \item\label{item:dual-op-contains-junk-codes} If $\UniNumA \ne \sharp \Gamma$ for any $\Gamma$, then $\UniNumA \in [\overline{\mathcal{D}}]$.
        \item\label{item:unprov-seq-enters-some-dual-approx} If $\muMALLord \alpha \not\vdash \Gamma$, then $\sharp\Gamma \in [\overline{\mathcal{D}}]^{\rank(\Gamma)+1}$.
        \item\label{item:compl-of-th-enters-dual-fp} $\omega \setminus \mathrm{Th}_\alpha = [\overline{\mathcal{D}}]^{\beta}$, when $\beta \geq \omega^{\alpha^\omega}$.
    \end{enumerate}
\end{proposition}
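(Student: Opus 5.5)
We prove the four items in order; items (1) and (3) carry the real content, and (4) is then a direct consequence.

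For \cref{item:dual-approxs-disjoint-from-theory} we use transfinite induction on $\gamma$. Suppose $[\overline{\mathcal{D}}]^{\delta} \cap \mathrm{Th}_\alpha = \emptyset$ for all $\delta<\gamma$, and let $\UniNumA \in [\overline{\mathcal{D}}]([\overline{\mathcal{D}}]^\delta)$ for some $\delta<\gamma$. There are two cases.
\begin{itemize}
\item If $\UniNumA \in [\overline{\mathcal{D}}]^\delta$, we conclude by the inductive hypothesis.
\item Otherwise $\lnot\mathcal{D}^\prime(\UniNumA, \omega\setminus[\overline{\mathcal{D}}]^\delta)$ holds. Since $\mathrm{Th}_\alpha \subseteq \omega\setminus[\overline{\mathcal{D}}]^\delta$ and $\mathcal{D}^\prime$ is positive, hence monotone, in $\UniSetA$, we get $\lnot\mathcal{D}^\prime(\UniNumA,\mathrm{Th}_\alpha)$. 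This means $\UniNumA\notin[\mathcal{D}^\prime](\mathrm{Th}_\alpha)=\mathrm{Th}_\alpha$, using the fixed point equation noted above.
\end{itemize}

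For \cref{item:dual-op-contains-junk-codes}: if $\UniNumA$ codes no sequent, then $\mathcal{D}^\prime(\UniNumA,S)$ is false for every $S$, since it asserts that $\UniNumA=\sharp\Gamma$ for some $\Gamma$. So the second disjunct of $\overline{\mathcal{D}}$ holds already with $S=\emptyset$, and $\UniNumA \in [\overline{\mathcal{D}}](\emptyset)=[\overline{\mathcal{D}}]^1$. Here I read $[\overline{\mathcal{D}}]$ as the first approximant.

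For \cref{item:unprov-seq-enters-some-dual-approx} we use induction on $\rk\Gamma$. Let $\Gamma$ be unprovable and consider any $\muMALLordcf\alpha$ step with conclusion $\Gamma$. It has some premiss $\Delta$ that is unprovable, otherwise $\Gamma$ would be provable. Cut elimination, \cref{cut-elim-mumall}, lets us identify provability in $\muMALLord\alpha$ with provability in $\muMALLordcf\alpha$ throughout. By \cref{proposition:ranks}, $\rk\Delta<\rk\Gamma$, so the inductive hypothesis gives $\sharp\Delta\in[\overline{\mathcal{D}}]^{\rk\Delta+1}\subseteq[\overline{\mathcal{D}}]^{\rk\Gamma}$. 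The approximants are increasing in the ordinal, because of the inflationary disjunct $\UniNumA\in\UniSetA$ in $\overline{\mathcal{D}}$ together with the union in their definition.

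Hence no rule instance concluding $\Gamma$ has all its premisses outside $[\overline{\mathcal{D}}]^{\rk\Gamma}$. That is, $\lnot\mathcal{D}^\prime(\sharp\Gamma,\omega\setminus[\overline{\mathcal{D}}]^{\rk\Gamma})$ holds, so $\sharp\Gamma\in[\overline{\mathcal{D}}]([\overline{\mathcal{D}}]^{\rk\Gamma})\subseteq[\overline{\mathcal{D}}]^{\rk\Gamma+1}$. The same argument covers sequents to which no rule applies at all, for instance a sequent consisting of a single literal; these land in $[\overline{\mathcal{D}}]^1$. The main care needed here is to check that the formal encoding of $\mathcal{D}^\prime$ quantifies over exactly the rule instances, including the $\nuord\beta$ rule with its premisses indexed by $c\lessO b$, so that ``not derivable in one step from the complement'' really means ``every instance has a premiss in the set.''

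For \cref{item:compl-of-th-enters-dual-fp}, the inclusion $\supseteq$ is \cref{item:dual-approxs-disjoint-from-theory}. For $\subseteq$, take $\UniNumA\notin\mathrm{Th}_\alpha$.
\begin{itemize}
\item If $\UniNumA$ codes no sequent, then \cref{item:dual-op-contains-junk-codes} gives $\UniNumA\in[\overline{\mathcal{D}}]^1$.
\item If $\UniNumA=\sharp\Gamma$, then \cref{item:unprov-seq-enters-some-dual-approx} gives $\UniNumA\in[\overline{\mathcal{D}}]^{\rk\Gamma+1}$. Here $\rk\Gamma<\omega^{\alpha^\omega}$ by \cref{lemma:ranks-upper-bound}(\ref{lemma:rub(5)}) and \cref{proposition:properties-natural-arithmetic}(\ref{item:natsum-omega^gamma}). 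Since $\omega^{\alpha^\omega}$ is a limit ordinal, $\rk\Gamma+1<\omega^{\alpha^\omega}\le\beta$.
\end{itemize}
Monotonicity of the approximants then places $\UniNumA$ in $[\overline{\mathcal{D}}]^\beta$.
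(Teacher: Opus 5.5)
Your proof is correct and follows the paper's argument essentially step for step. Item 1 uses the same transfinite induction, combining monotonicity of $\mathcal{D}'$ with the fixed-point equation $\mathrm{Th}_\alpha=[\mathcal{D}'](\mathrm{Th}_\alpha)$, and item 3 uses the same induction on $\rank(\Gamma)$ via \cref{proposition:ranks}, arguing directly where the paper argues by contradiction. Your write-up is in places slightly more careful than the paper's: it works with the union-based definition of approximants directly, reads $\mathcal{D}'$ (not $\mathcal{D}$) in item 2, treats rule-free sequents uniformly, and notes explicitly that item 1 supplies the inclusion $\supseteq$ in item 4.
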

\begin{proof}
    \cref{item:dual-approxs-disjoint-from-theory} is proved by induction on $\gamma$. The case $\gamma = 0$ or $\gamma$ being limit is trivial. Let $\gamma = \delta+1$ and $\UniNumA \in [\overline{\mathcal{D}}]^\gamma = [\overline{\mathcal{D}}]([\overline{\mathcal{D}}]^\delta)$. This means that $\mathbb{N} \vDash (\UniNumA \in [\overline{\mathcal{D}}]^\delta) \vee \lnot \mathcal{D}^\prime(\UniNumA,\omega\setminus [\overline{\mathcal{D}}]^\delta)$. By the induction hypothesis, if $\UniNumA \in [\overline{\mathcal{D}}]^\delta$, then $\UniNumA \notin \mathrm{Th}_\alpha$. 
    Consider the case $\mathbb{N} \vDash \lnot \mathcal{D}^\prime(\UniNumA,\omega\setminus [\overline{\mathcal{D}}]^\delta)$. Assume, by way of contradiction, that $\UniNumA \in \mathrm{Th}_\alpha = [\mathcal{D}^\prime](\mathrm{Th}_\alpha)$. This implies that $\mathbb{N} \vDash \mathcal{D}^\prime(\UniNumA,\mathrm{Th}_\alpha)$. By the induction hypothesis, $\mathrm{Th}_\alpha \subseteq \omega \setminus [\overline{\mathcal{D}}]^\delta$, therefore, $\mathbb{N} \vDash \mathcal{D}^\prime(\UniNumA,\omega \setminus [\overline{\mathcal{D}}]^\delta)$. This is a contradiction.

    To prove \cref{item:dual-op-contains-junk-codes}, note that, for any $\UniSetA$, $\mathcal{D}^\prime(\UniNumA,\UniSetA)$ is true only if $\UniNumA$ is the notation of a sequent; therefore, if $\UniNumA$ is not a notation, then $\lnot \mathcal{D}(\UniNumA,\omega)$ holds, meaning that $\UniNumA \in [\overline{\mathcal{D}}]$.

    \cref{item:unprov-seq-enters-some-dual-approx} is proved by induction on $\rank(\Gamma)$. The case $\rank(\Gamma)=0$ is trivial since the rank of any sequent is at least $1$. Let $\rank(\Gamma)>0$. Assume, by way of contradiction, that $\sharp\Gamma \notin [\overline{\mathcal{D}}]^{\rank(\Gamma)+1}$, or equivalently, that: 
    \[
    \mathbb{N} \vDash (\UniNumA \notin [\overline{\mathcal{D}}]^{\rank(\Gamma)}) \mathrel{\&} \mathcal{D}(\sharp \Gamma, \omega \setminus [\overline{\mathcal{D}}]^{\rank(\Gamma)}).
    \]
    This implies that $\Gamma$ is a conclusion of a rule application of the form,
    \[
    \vlinf{}{}{\Gamma}{(\Delta_i)_{i<\zeta}}
    \]
    for some $\zeta$ such that $\sharp \Delta_i \notin [\overline{\mathcal{D}}]^{\rank(\Gamma)}$ for each $i < \zeta$. Thanks to \cref{proposition:ranks}, $\rank(\Delta_i) < \rank(\Gamma)$, hence $\sharp \Delta_i \notin [\overline{\mathcal{D}}]^{\rank(\Delta_i)+1}$. By the induction hypothesis, $\muMALLord \alpha \vdash \Delta_i$, and therefore $\Gamma$ is derivable in $\muMALLord \alpha$ as well using the above rule application. This is a contradiction.

    Finally \cref{item:compl-of-th-enters-dual-fp} follows from \cref{item:dual-op-contains-junk-codes,item:unprov-seq-enters-some-dual-approx} since ranks of all sequents are less than $\beta = \omega^{\alpha^\omega}$.
\end{proof}

 \cref{proposition:bar-D,theorem:reducibility-inductive-definitions} imply that $\omega \setminus \mathrm{Th}_\alpha$ (as well as $\mathrm{Th}_\alpha$) is computably enumerable in $H(b)$ when $b$ is a notation for $\omega^{\alpha^\omega}$. 
 Thus we have shown the membership direction of our main result, \cref{theorem:complexity-main}: 

 \begin{theorem}
 [Membership]
     \label{membership}
     Provability in $\muMALLord\alpha$ is in $\Sigma^0_{\omega^{\alpha^\omega}}$, for computable ordinals $\alpha$.
 \end{theorem}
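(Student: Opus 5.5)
The plan is to assemble the preceding material into a Turing reduction of $\mathrm{Th}_\alpha$ to $H(b)$, where $b \in \KleeneO$ is a notation for $\beta = \omega^{\alpha^\omega}$. First I check that $\beta$ is indeed a computable limit ordinal with a notation. Since $\alpha$ is computable, so is $\alpha^\omega$, and hence so is $\omega^{\alpha^\omega}$, because ordinal exponentiation of computable wellorders is computable. Moreover, $\beta$ is a limit ordinal since $\alpha^\omega \geq 1$. By Spector's theorem the choice of $b$ does not matter up to $\equiv_\Tred$. I would also fix the notation $a$ for $\alpha$ that was used in the G\"odel numbering, so that the sets $P$, $Q$, and in turn $\mathcal{D}'$, are c.e.\ relative to nothing.

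Second, I apply \cref{theorem:reducibility-inductive-definitions} to the positive formula $\mathcal{D}$. This shows that $[\mathcal{D}]^\beta$ is c.e.\ in $H(b)$. By \cref{proposition:ranks} and induction on $\rk\Gamma$, a sequent $\Gamma$ is provable in $\muMALLordcf\alpha$ iff $\sharp\Gamma \in [\mathcal{D}]^{\rank(\Gamma)}$. The forward direction uses the fact that every premiss has smaller rank, and the backward direction is soundness of the stages. Since every sequent has rank below $\beta$ (by \cref{lemma:ranks-upper-bound}(5) together with closure of $\omega^{\alpha^\omega}$ under $\natsum$, \cref{proposition:properties-natural-arithmetic}), and the stages are monotone thanks to the inflationary disjunct $\UniNumA\in\UniSetA$, we get $[\mathcal{D}]^\beta = \mathrm{Th}_\alpha$ via \cref{cut-elim-mumall}. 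Here I should restrict to codes of $\muMALL$ sequents, which is a computable subset, so this restriction is harmless.

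Third, I apply \cref{theorem:reducibility-inductive-definitions} again, this time to the positive formula $\overline{\mathcal{D}}$. Here I would double-check positivity: $\lnot\mathcal{D}'(\UniNumA,\omega\setminus\UniSetA)$ pushes the negation onto occurrences $\UniNumA \notin \omega\setminus \UniSetA$, i.e.\ $\UniNumA\in\UniSetA$, but the c.e.\ parts $P,Q$ become negated. That is fine, because they are arithmetical and do not mention $\UniSetA$. By \cref{proposition:bar-D}(4), $[\overline{\mathcal{D}}]^\beta = \omega\setminus\mathrm{Th}_\alpha$, so this complement is c.e.\ in $H(b)$. Post's theorem relativized (the folklore fact quoted earlier) then yields $\mathrm{Th}_\alpha \leq_\Tred H(b)$, which is exactly membership in $\Sigma^0_{\omega^{\alpha^\omega}}$ by \cref{dfn:hyperarith-hier}.

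I expect the main obstacle to be the second step: showing that the sequent ranks, which are natural sums of formula ranks, stay strictly below $\beta$, and that the stage indexing matches. This uses that $\omega^{\alpha^\omega}$ is additively principal, i.e.\ a power of $\omega$, so the natural sums remain below it. For the degenerate small case $\alpha<\omega$, the bound in \cref{lemma:ranks-upper-bound}(5) relied on $\deg\alpha>0$. I would either restrict to $\alpha\ge\omega$ as in \cref{theorem:complexity-main}, or note that for finite $\alpha$ the ranks are bounded by $\omega^\omega$ and the same argument goes through with a larger limit notation.
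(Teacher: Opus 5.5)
Your proposal is correct and is essentially the paper's own argument. You apply \cref{theorem:reducibility-inductive-definitions} to the positive operators $\mathcal{D}$ and $\overline{\mathcal{D}}$ at a notation $b$ for the limit ordinal $\omega^{\alpha^\omega}$, identify the two stage-$\beta$ sets with $\mathrm{Th}_\alpha$ and its complement using the rank bound and \cref{proposition:bar-D}, and conclude that $\mathrm{Th}_\alpha$ is both c.e.\ and co-c.e.\ in $H(b)$. Your extra checks are points the paper leaves implicit: that $\beta$ has a limit notation, that $\omega^{\alpha^\omega}$ is closed under $\natsum$, that $\overline{\mathcal{D}}$ is positive, and that the rank bound was only proved for $\alpha\ge\omega$.

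One small slip, which you share with the paper: a provable $\Gamma$ of limit rank is only guaranteed to enter at stage $\rank(\Gamma)+1$, not at stage $\rank(\Gamma)$. Think of a $\nuord{\omega}$ step whose premiss ranks are cofinal in $\rank(\Gamma)$. This is harmless here because $\beta$ is a limit.
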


\section{Lower bound on provability in $\muMALLord\alpha$}
\label{sec:lower-bd}

Let us now turn towards proving the lower bound for \cref{theorem:complexity-main}, which is much more involved.
The main result of this section is:

\begin{theorem}
[Hardness]
\label{theorem:lower-bound}
    Provability in $\muMALLord \alpha$ is $\Sigma^0_{\omega^{\alpha^\omega}}$-hard, for computable ordinals $\alpha \geq \omega$.
\end{theorem}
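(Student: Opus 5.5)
Since $\omega^{\alpha^\omega}$ is a limit of the ordinals $\lambda_n := \omega^{\omega^{(\deg\alpha) n}}$, and $H(b)$ for a notation $b = 3\cdot 5^e$ of $\omega^{\alpha^\omega}$ is the effective union of the $H(\varphi_e(n))$, the plan is to prove $\Sigma^0_{\lambda_n}$-hardness \emph{uniformly} in $n$. Concretely, for each $n$ I will build a computable many-one reduction from a complete problem at level $\lambda_n$ to provability in $\muMALLord\alpha$, with the reduction computable uniformly from $n$ and the notation. Then $H(b)$ is Turing reducible to $\mathrm{Th}_\alpha$: to decide $\pair{c}{m}\in H(b)$, first find $n$ with $c \lessO \varphi_e(n)$, then query the $n$-th reduction. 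The uniformity is used exactly here, because Spector's theorem gives us freedom in the choice of notation.

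For the complete problems I use \emph{computable infinitary sentences}: the truth of computable $\Sigma_\beta$ infinitary propositional sentences, built from atoms by c.e.\ disjunctions and conjunctions indexed along Kleene's $\KleeneO$, is complete for $\Sigma^0_\beta$ (Ash--Knight). I translate such a sentence $\varphi$ into a closed $\muMALL$ sequent $\Gamma_\varphi$ whose cut-free focussed proofs mirror the truth game of $\varphi$. An infinitary disjunction becomes a positive phase: a $\mu$-unfolding together with $\vlor$-choices that selects a disjunct. An infinitary conjunction becomes a negative $\nuord\beta$/$\vlan$ phase ranging over all conjuncts. The ordinal indexing cannot be written explicitly, because input sequents must be pure $\muMALL$ and $\mu$ is read as $\muord\alpha$. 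I therefore encode it with the resource formulas $\Resource_n$ of \cref{definition:rho-formula}. Nesting $n$ fixed points of the shape $\mu^\alpha x_n.(x_{n+1}\vlpa\Resource_n)$ gives unfoldings whose available "fuel" is an ordinal below $\lambda_{n+1}$, as shown by \cref{lemma:rho-ranks-lower-bound}. Alongside this I encode a counter-machine or recursive-enumeration gadget in $\MALL$ that computes the index $\varphi_e(k)$ of each sub-sentence. Machine computations are handled in the style of previous $\omega$-branching hardness proofs for $\muMALLord\omega$, using $\mu$ for finite loops and tensors of literals for registers.

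Adequacy has two directions, each proved by induction on the ordinal rank of the infinitary sentence.
\begin{itemize}
\item If $\varphi$ is true, I construct a proof of $\Gamma_\varphi$ directly, choosing the witnessing disjunct at each $\vlor$ and the approximant index $\gamma<\beta$ at each $\muord\beta$.
\item If $\Gamma_\varphi$ is provable, I first apply cut elimination (\cref{cut-elim-mumall}) and then focussing (\cref{focussing-completeness}). This yields a focussed proof whose only genuine choices are the decide and synchronous steps. I show that these choices correspond to moves in the truth game, and that any bad choice leads to an unprovable leaf: a literal mismatch, a $0$, or exhaustion of fuel.
\end{itemize}
In both directions the key invariant is the same: the approximant ordinals on the $\Resource$-fixed points track the $\KleeneO$-rank of the current sub-sentence.

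I expect the main obstacle to be this ordinal bookkeeping. Fixed points of the form $\muord\alpha$ can only count up to ordinals definable from $\alpha$ by the iterated natural-product structure, so I must show that every $\beta<\lambda_n$ is faithfully simulated. In particular, I need that at limit stages a $\nuord\gamma$ branch over all $\gamma$ matches the effective conjunction over a cofinal sequence $\varphi_e(k)$, without admitting spurious proofs that "cheat" by choosing approximants that do not correspond to any notation. My first attempt will be to reduce instead from the iterated inductive definitions $[\Phi]^{\beta}$ of \cref{theorem:reducibility-inductive-definitions}, with $\Phi$ a fixed $\Sigma^0_1$-complete positive operator. Stage $\beta$ of this inductive definition is encoded by the fuel carried by $\Resource_n$, and the correspondence is proved against the tightness computation of the rank section.
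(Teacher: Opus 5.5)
Your outer architecture is the paper's. You reduce $\TrueIPS{a_0}$ for a notation $a_0$ of $\omega^{\alpha^\omega}$ with fundamental sequence $\lambda_n$, interpret indices with Minsky-machine gadgets analysed via cut elimination and focussing, and use $\Resource_{n+1}$ as fuel. What you need from \cref{lemma:rho-ranks-lower-bound} is the lower bound $\rank(\Resource_{n+1}) \geq \lambda_n$, not an upper bound. The gap is that the mechanism that makes the lower bound work is missing. Nothing in your sketch forces one finite sequent to spend exactly one unit of fuel per step of the truth recursion, or says how leftover fuel is discarded without weakening. In the paper this is the $\aug$ construction. Every $\vlpa$- and $\muord\beta$-node of $\Resource_{n+1}$ carries its own copy of $\FORInd$ and is wrapped as $\lock(A) = \FORPass \vlor (\neg{\VARkey_1} \vlte A)$. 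Since only one $\VARkey_1$ is present, a focussed proof of $\hat E_1,\dots,\hat E_l,\VARsym_0^u,\VARkey_1$ can open only one fuel node, releasing its children and a fresh $\FORInd$. Then \cref{lemma:FORInd} leads back to a sequent of the same shape, $\hat\Theta,\VARsym_0^v,\VARkey_1$, for a sub-index $v$; in the conjunction case the whole remaining fuel is copied into every $\nu$-branch. Meanwhile the $\FORPass$ disjuncts absorb unused fuel once $\VARacc,\neg{\VARacc}$ appear at accepting leaves. Your sketch does not stop fuel being opened in the middle of a machine computation, nor let it be thrown away. So the claim that every bad choice ends in an unprovable leaf is unsupported.

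Second, your proposed invariant is the wrong one, and the obstacle you foresee is misplaced. Approximant indices cannot be made to track the notation of the current sub-sentence, since the counter-machine gadgets have no access to which $\gamma$ was chosen; and no such tracking is needed. For provable $\Rightarrow$ true, induct on the natural-sum rank of the fuel multiset, which strictly decreases whenever a node is opened (\cref{proposition:ranks}). Arbitrary approximant choices are harmless, because a proof simply yields a well-founded verification. The $\nu$ in the conjunction case ranges over pairs of natural numbers (index and witness), not over ordinals; the fundamental sequence of $a_0$ is used only in the outer reduction, to choose $n$. For true $\Rightarrow$ provable you only need to preserve $\rank(E_1,\ldots,E_l) > \ordO{a}$:
\begin{itemize}
\item If some $E_i$ has successor rank, open it, choosing $\gamma$ attaining the supremum if it is a $\muord\beta$-node; the rank then drops by exactly one.
\item If all ranks are limits, open the one of least lowest Cantor-normal-form degree, and use continuity of $\natsum$ (\cref{fact:continuity-natural-sum}) to pick $\gamma$ keeping the rank $\geq \ordO{a}$.
\end{itemize}
This natural-sum calculus is the real ordinal bookkeeping.

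Your fallback via $[\Phi]^\beta$ with a $\Sigma^0_1$ positive $\Phi$ cannot work. Such operators are finitary, so their iterates stabilise by stage $\omega$ and yield only c.e.\ sets. You need an unbounded universal quantifier over $U$, which is exactly what the $\nu$-branching supplies. Finally, in your outer reduction the search for $n$ with $c \lessO \varphi_e(n)$ diverges unless $c \lessO b$, which is merely c.e. Test this first with a provability query, using \cref{proposition:Sigma01-hardness}.
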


\noindent
To this end, for the remainder of this section, let us fix a computable ordinal $\alpha \ge \omega$, and thereby the system $\muMALLord \alpha$.

For the proof, we need a handy tool of hyperarithmetical theory: computable infinitary formulae which are built from the truth $\top$ and falsity $\bot$ constants by means of computable infinitary conjunctions and disjunctions. 
We shall follow mostly the exposition of \cite{AshK2000}.

\begin{definition}
[Computable infinitary propositional sentences]
\label{definition:ips}
    For $a \in \KleeneO$, define: 
    \[
    S^{\Sigma}_{a}\ :=\
    {\left\{\langle 0, a, e \rangle \mid e \in \omega \right\}}
    \quad \text{and} \quad
    S^{\Pi}_{a}\ :=\
    {\left\{\langle 1, a, e \rangle \mid e \in \omega \right\}} .
    \]
    Elements of these sets are called \textbf{indices}. Indices are mapped to \emph{infinitary propositional sentences} by means of a function $\ips{\cdot}$ defined by:
    \begin{itemize}
        \item 
        For $\varepsilon \in \{0,1\}$, let $\ips{\langle \varepsilon, 1, e \rangle} = 
        \begin{cases}
            \bot & e=0; \\
            \top & e>0. \\
        \end{cases}
        $
        \item For $a \ne 1$, let $\ips{\langle 0, a, e \rangle}\df $ 
        \[
        \bigvee
        \left\{ \ips{k} \mid {\UCF\left(e,k\right)}\ \text{converges and}\ {\left( \exists b \lessO a \right)}\, {\left( \exists e^\prime \right)}\, {\left( k = \langle 1, b, e^\prime \rangle \right)} \right\} .
        \]
        \item Symmetrically, for $a \ne 1$, let $\ips{\langle 1, a, e \rangle} \df$
        \[
        \bigwedge
        \left\{ \ips{k} \mid {\UCF\left(e,k\right)}\ \text{converges and}\ {\left( \exists b \lessO a \right)}\, {\left( \exists e^\prime \right)}\, {\left( k = \langle 0, b, e^\prime \rangle \right)} \right\} .
        \]
    \end{itemize}
    The definition of truth of an infinitary propositional sentence is classical: $\top$ is true, $\bot $ is not true, a disjunction is true iff one of the disjuncts is true, a conjunction is true iff all the conjuncts is true.
    Let $\TrueIPS{a}\df \{\langle \varepsilon, b, e \rangle \mid b \lessO a, \ \ips{\langle \varepsilon, b, e \rangle} ~ \text{is true} \}$ be the set of indices of true infinitary propositional sentences with ordinal notations below $a$.
\end{definition}

The main fact relating computable infinitary sentences to hyperarithmetical hierarchy is presented below. 

\begin{fact}
    Let $b \in \KleeneO$ represent an infinite limit ordinal $\ordO{b}$. Then $\TrueIPS{b}$ is many-one equivalent to $H(b)$.
\end{fact}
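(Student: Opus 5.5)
The plan is to prove the two many-one reductions separately. Both directions use \emph{effective transfinite recursion}, i.e.\ the recursion theorem applied along $\lessO$, following the treatment of computable infinitary formulae in \cite{AshK2000}. Throughout, $b$ is a notation for an infinite limit ordinal. The only property of limit notations needed is the following: for every $a \lessO b$ we also have $2^a \lessO b$ and $2^{2^a} \lessO b$. Moreover, for any $a$, we have $2^{2^a} \lessO b$ iff $a \lessO b$.

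\emph{Step 1: $\TrueIPS{b} \le_\mred H(b)$.} First show, by effective transfinite recursion on $a$, that there is a single computable procedure which, given $a \in \KleeneO$, enumerates the true indices in $S^\Sigma_a$ relative to the oracle $H(a)$. Dually, the true indices in $S^\Pi_a$ are co-c.e.\ in $H(a)$, uniformly in $a$.
\begin{itemize}
\item In the base case $a = 1$ truth is decidable.
\item For $a \neq 1$, $\ips{\langle 0,a,e\rangle}$ is true iff some $k$ enumerated by $\UCF(e,\cdot)$ has the form $\langle 1,c,e'\rangle$ with $c \lessO a$ and $\ips{k}$ true. By the induction hypothesis, the truth of such $k$ is co-c.e.\ in $H(c)$, hence computable in $H(c)' = H(2^c)$. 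Since $2^c \le_{\KleeneO} a$, this is in turn computable in $H(a)$, uniformly, because $H(c) \le_\Tred H(a)$ uniformly for $c \lessO a$.
\end{itemize}
Consequently the truth of an index with notation $a$ is computable in $H(a)'$, and so is many-one reducible to $H(a)'' = H(2^{2^a})$ via a reduction computable from $a$. Given an index $\langle \varepsilon,a,e\rangle$, output $\langle 2^{2^a}, n\rangle$, where $n$ is the image under that reduction. By the definition of $H(b)$ at a limit notation, this lies in $H(b)$ iff $2^{2^a}\lessO b$ and $n \in H(2^{2^a})$. By the remark above, this holds iff $a \lessO b$ and the index is true. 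This automatically handles indices whose notation is not below $b$, which is the reason for passing through $2^{2^a}$ rather than $a$ itself.

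\emph{Step 2: $H(b) \le_\mred \TrueIPS{b}$.} By effective transfinite recursion, construct a computable function $(c,n) \mapsto \sigma(c,n)$ such that, for $c \in \KleeneO$, $\sigma(c,n)$ is an index in $S^\Sigma_{c^*}$ with $\ips{\sigma(c,n)}$ true iff $n \in H(c)$. Here $c^*$ is some notation computable from $c$, chosen so that $c^* \lessO b$ whenever $c \lessO b$; for instance $c^* = 2^{2^c}$ suffices. We also need a computable negation operation: it swaps $\Sigma/\Pi$ tags and recursively negates the components, is defined by the recursion theorem, and turns a $\Sigma_a$ index into a $\Pi_a$ index of the negated sentence.
\begin{itemize}
\item For $c = 1$, take $\bot$.
\item For $c = 2^a$: $n \in H(a)'$ iff there exist a stage $s$ and a finite oracle table $(P,N)$ along which $\varphi_n(n)$ converges, with $P \subseteq H(a)$ and $N \cap H(a)=\emptyset$. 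This is a computable disjunction of conjunctions. Each conjunction is built from the sentences $\sigma(a,m)$ for $m\in P$ and the negations of $\sigma(a,m)$ for $m \in N$. Each such disjunct is padded to a $\Pi$ index of lower notation, so that the whole is a $\Sigma_{c^*}$ index.
\item For $c = 3\cdot 5^e$: $\langle d,n\rangle \in H(c)$ iff there exist $k$ and a stage at which $d \lessO \varphi_e(k)$ has been enumerated, and in addition $\sigma(d,n)$ is true. This again is a computable disjunction, since $\lessO$ restricted below a given notation is c.e.
\end{itemize}
Finally, map $\langle d,n\rangle$ to $\sigma(b,\langle d,n\rangle)$, which has notation below $b$.

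I expect the main obstacle to be the notational bookkeeping in Step 2. The definition of $\ips{\cdot}$ forces disjuncts to be $\Pi$ indices with \emph{strictly} smaller notations, and the limit case has components of unbounded notation. So $c^*$ must be chosen uniformly and verified to satisfy $c^* \lessO b$, and trivial layers (a conjunction or disjunction of one element) must be inserted to adjust levels. I would first try to fix $c^*$ as a uniform small successor of $c$ and prove by induction that every subindex produced has notation strictly $\lessO$ the enclosing one. The recursion-theorem argument itself is routine once this invariant is in place.
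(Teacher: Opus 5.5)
Your overall plan is the standard Ash--Knight argument: effective transfinite recursion in both directions, with the $2^{2^a}$ device absorbing indices whose notation is not below $b$. The paper does not prove this fact; it only cites Ash--Knight, and it uses only the direction $H(b)\le_{\mred}\TrueIPS{b}$. Your Step~1 is fine. Step~2, which is the direction the paper relies on, fails as written, in two places.

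\textbf{The final map.} By your own stipulation $\sigma(b,\langle d,n\rangle)\in S^\Sigma_{b^*}$ with $b^*=2^{2^b}$. This is not $\lessO b$, so every image lies outside $\TrueIPS{b}$, and the map sends every input to a non-member. Choosing a different $b^*$ does not help. The single notation $b^*$ of $\sigma(b,\cdot)$ must lie above the notations $2^{d^*}\ge_{\KleeneO} d$ of its disjuncts for arbitrarily large $d\lessO b$. Hence $b^*$ denotes an ordinal $\ge\ordO{b}$ whatever choice you make. You should instead send $\langle d,n\rangle\mapsto\sigma(d,n)$, and non-pairs to the false index $\langle 0,1,0\rangle$. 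Then argue, exactly as in Step~1, that $d^*\lessO b\iff d\lessO b$.

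\textbf{The level bookkeeping.} The claim that $c^*=2^{2^c}$ suffices is false for the successor construction you describe. Since $\sigma(a,m)$ has notation $a^*$, a $\Pi$ index with it as a conjunct needs notation $\ge_{\KleeneO}2^{a^*}$. The negations of the $\sigma(a,m)$ are $\Pi$ indices of notation $a^*$, so they must be flattened in, or wrapped at a further cost. The outer disjunction must then sit strictly above that conjunction, which forces $(2^a)^*\ge_{\KleeneO}2^{2^{a^*}}$. So each jump costs at least two successor steps while $c$ advances by only one. Consequently no fixed tower $c\mapsto 2^{2^{\cdots^{2^c}}}$, i.e.\ no ``uniform small successor of $c$'', can work.

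Define $c^*$ by recursion on the shape of $c$ instead. For example, take $1^*\df 1$, $(2^a)^*\df 2^{2^{2^{a^*}}}$ (enough even with wrapping), and $c^*\df c$ otherwise, in particular for $c=3\cdot 5^e$. Then prove by induction:
\begin{enumerate}
\item $d\le_{\KleeneO}d^*$ for $d\in\KleeneO$;
\item $d^*\in\KleeneO$ implies $d\in\KleeneO$;
\item $d\lessO c$ with $c$ a limit notation implies $d^*\lessO c$, using that $\{x : x\lessO c\}$ is closed under $x\mapsto 2^x$.
\end{enumerate}
The surplus levels accumulate only along finite successor chains and reset at limits. The third fact makes the limit clause well-levelled with $c^*=c$. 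All three together give $d^*\lessO b\iff d\lessO b$, which the corrected final map needs. Alternatively, distribute the finite conjunction over the disjuncts of the $\sigma(a,m)$ so that each jump costs a single level; then a fixed offset does suffice.
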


See e.g.~\cite{AshK2000} for a proof. In particular, the fact that $H(b)$ is many-one reducible to $\TrueIPS{b}$, proved in \cite[Proposition 7.8(a)]{AshK2000}, shows that $\TrueIPS{b}$ is $\Sigma^0_{\ordO{b}}$-hard. 

Our main goal in this section is to provide a Turing reduction from $\TrueIPS{c}$ to the $\muMALLord \alpha$ provability predicate for some $c$ a notation for $\omega^{\alpha^\omega}$.

\subsection{Encoding computable functions in $\muMALLord \alpha$}

First, in order to reduce $\TrueIPS{c}$ to provability in $\muMALLord\alpha$, we want to simulate computable functions within $\muMALLord \alpha$. This is done in the spirit of the constructions from \cite{DasDS23,Kuznetsov22}: we take a Minsky machine that implements a computable function and encode its instructions in $\muMALLord \alpha$.

\begin{definition}
    A \textbf{Minsky machine} is a tuple $\MinMach = (Q,s,t,I)$ where $Q$ is a set of states, $s,t \in Q$ are the start state and the accepting state resp., and $I$ is a finite set of instructions. An instruction is either of the form $\INC(p,i,q)$ or of the form $\JZDEC(p,i,q,r)$ where $p,q,r \in Q$ and $i \in \omega$. 
    
    A Minsky machine's \textbf{configuration} is of the form $p(k_0,k_1,\ldots,k_{n-1})$ where $p \in Q$ and $c_i \in \omega$.
    Operational semantics, mapping a configuration to the next one, is given by:
    \begin{itemize}
        \item The semantics of the $\INC(p,i,q)$ is: from the state $p$, increase $k_i$, i.e.~the value of the counter No.~$i$, by $1$ and switch to the state $q$. 

    \item
    The semantics of $\JZDEC(p,i,q,r)$ is: from the state $p$, if $k_i$ is greater than $0$, decrease it by $1$ and switch to the state $q$; otherwise, do not change counter values and switch to the state $r$.
    \end{itemize}

    The Minsky machine $\MinMach$ \textbf{computes} a partial function $f : \underbrace{\omega \times \ldots \times \omega}_{k~\text{times}} \parto \underbrace{\omega \times \ldots \times \omega}_{l~\text{times}}$ 
    if the following two statements are equivalent:
    \begin{enumerate}
        \item $t(m_0,m_1,\ldots)$ is reachable from $s(n_0,\ldots,n_{k-1},0,0,\ldots)$;
        \item $f(n_0,\ldots,n_{k-1})$ is defined and equals $(m_0,\ldots,m_{l-1})$, while $m_i = 0$ for $i \ge l$.
    \end{enumerate}
\end{definition}

\begin{fact}
[\cite{Minsky67}]
    Any computable function is computed by some Minsky machine.
\end{fact}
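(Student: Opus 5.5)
We prove the Fact through the characterisation of computable functions as the partial recursive functions, so by induction on a partial recursive definition of $f$. For each such $f : \omega^k \parto \omega^l$ (handling $l>1$ componentwise, with a final reshuffling of outputs) we build a \emph{deterministic} machine $\MinMach_f = (Q,s,t,I)$. Here deterministic means every state other than $t$ is the source of exactly one instruction, and $t$ is the source of none. We also maintain the invariant that on input $s(n_0,\ldots,n_{k-1},0,0,\ldots)$ the machine reaches $t$ at most once. It does so exactly when $f(\vec n)\downarrow$, and then in the configuration $t(f(\vec n),0,0,\ldots)$, with \emph{all} other counters cleared.

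Since the machine is deterministic and $t$ is a sink, reachability of $t(\vec m)$ coincides with the machine halting with output $\vec m$. The equivalence required by the definition of ``computes'' then follows directly from the invariant.

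The basic functions and the gadgets are built first.
\begin{itemize}
\item The zero function, successor and projections each need only short programs.
\item To clear a counter $i$ we loop on $\JZDEC(p,i,p,q)$.
\item To move counter $i$ into counter $j$ we loop $\JZDEC(p,i,p',q)$, $\INC(p',j,p)$.
\item To copy counter $i$ into counters $j$ and $j'$ we do the same with two increments. Restoring $i$ is then a move back.
\end{itemize}
Every subroutine is then relocated to use fresh, high-numbered working counters. This is done by renaming counter indices and states, which is harmless since machines use only finitely many counters.

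The closure steps use these gadgets.
\begin{itemize}
\item \textbf{Composition} $h(g_1(\vec n),\ldots,g_r(\vec n))$: copy the inputs into reserved registers, run each relocated $\MinMach_{g_j}$ on a fresh copy and park its output, then move the parked outputs into the input registers of $\MinMach_h$. Before entering the final accept state, clear every auxiliary register.
\item \textbf{Primitive recursion}: implement a loop with a counter $c$ running up to the recursion argument. The loop repeatedly invokes $\MinMach_g$ on copies of $(c,\vec n,\text{current value})$.
\item \textbf{Minimisation} $\mu y.\,g(\vec n,y)=0$: test $y=0,1,2,\ldots$ in turn. Run $\MinMach_g$ on copies, use $\JZDEC$ on its output to branch, and clear the output before the next iteration. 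If no zero exists, or some $g(\vec n,y)$ diverges, control never reaches the new $t$, which preserves the invariant.
\end{itemize}

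The main obstacle is the bookkeeping rather than any conceptual step. We must show that the accept state of a subroutine is glued to the continuation of the caller and never serves as a global sink, and that after halting every register of index at least $l$ is zero, as the definition demands. We would handle this uniformly. Each constructed machine has a single ``exit'' state in place of $t$, and a final cleanup phase zeroes a known finite list of working registers before jumping to the real $t$. Correctness of each construction is then a routine simulation argument on the length of computations.
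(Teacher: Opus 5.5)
Your proof is correct, but it takes a different route from the one behind the paper's statement. The paper gives no argument of its own and simply cites Minsky. The classical proof there shows that counter (program) machines simulate Turing machines: the tape to the left and right of the head is coded as numbers held in counters, and head moves are realised by multiplying and dividing by a constant, using loops like your move and copy gadgets.

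You instead pass through Kleene's characterisation of the partial computable functions as the partial recursive ones, and assemble machines compositionally in the style of Shepherdson and Sturgis. What your route buys is exact control of the input/output convention. Your invariant (deterministic machine, $t$ a sink, every counter of index $\ge l$ zero on reaching $t$) is precisely what the paper's reachability-based definition of `computes' demands. Minsky's results are stated with other conventions, most famously the two-counter theorem, which maps $2^{n}$ to $2^{f(n)}$. So relying on the citation tacitly needs the encoding, decoding and clean-up phases that you build in explicitly.

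What the Turing-machine route buys is a bound on the number of counters and a direct link to the Turing-machine definition of computability. The paper needs neither, since \cref{lemma:FORComp} only uses that each machine mentions finitely many counters.

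One naming slip: in your primitive recursion clause, the loop should call the machine for the step function, after initialising the current value with the base-case machine.
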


Now let us set up some notation for formulas in $\muMALLord \alpha$. 

\begin{itemize}
    \item Fix three propositional symbols, denoted by $\VARacc$, $\VARkey_1$ and $\VARkey_2$, and also countably many symbols, denoted by $(r_i)_{i \in \omega}$.
    \item Let $\tup(n_0,\ldots,n_k)=\VARsym_0^{n_0},\ldots,\VARsym_k^{n_k}$.
    (The symbol $\VARsym_i$ repeated $k$ times corresponds to the $i$\textsuperscript{th} counter of a Minsky machine being equal to $k$.)
    \item Let $\FORPass \df \neg \VARacc \vlte \VARacc$.
    \item Let $\FORAcc \df \neg \VARacc \vlpa \VARacc$ (note that $\FORAcc$ is the negation of $\FORPass$).
\end{itemize}

\begin{definition}
    Let $\Xi \subseteq \Prop$. A formula is \textbf{$\Xi$-locked} if it has the form, 
    \[
        (\neg p_1 \vlte A_1) \vlor \ldots \vlor (\neg p_m \vlte A_m) \vlor \FORPass
    \]
    where each $p_i \in \Xi$.
    A sequent $\Gamma$ is \textbf{$\Xi$-locked} if all its formulae are $\Xi$-locked.
\end{definition}
Informally, elements of $\Xi$ are `keys', and $p_i$ is required to `unlock' the $i$\textsuperscript{th} disjunct of the above formula (while $\VARacc$ `unlocks' the last disjunct). 

\begin{lemma}\label{lemma:FORComp}
    For any $l$ partial computable functions $f_1,\ldots, f_l$, there exists a formula $\FORComp$ and $s_1,\ldots,s_l,t \in \Prop$ such that, for any $\{t,\VARkey_1\}$-locked multiset $\Gamma$ and any $d \in \{1,\ldots, l\}$, the sequent, 
    \[
    \Gamma, \tup(\vec{n}),s_d, \FORComp
    \]
    is derivable if and only if $f_d(\vec{n})$ is defined and the sequent,
    \[
    \Gamma, \tup(f_d(\vec{n})),t
    \]
    is derivable.
\end{lemma}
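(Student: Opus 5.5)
We will take $\FORComp$ to be a $\nu$-formula encoding a single Minsky machine, and prove the two directions by matching runs of the machine with focussed proofs, using \cref{focussing-completeness}. First, by the usual pairing trick, we combine $f_1,\dots,f_l$ into one Minsky machine $\MinMach=(Q,s,t,I)$. It has $l$ start states $s_1,\dots,s_l$ and one accepting state $t$. From $s_d$ it computes $f_d$ and halts in $t$ with all auxiliary counters zero. We identify states with propositional symbols.

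For each instruction we define a formula that is essentially an $\Xi$-locked ``program clause'' of the kind in \cite{DasDS23,Kuznetsov22}. For $\INC(p,i,q)$ take $\encIns{\INC(p,i,q)} \df \neg p \vlte (q \vlpa \VARsym_i)$. For $\JZDEC(p,i,q,r)$ take the additive combination
\[
\neg p \vlte \bigl((\neg \VARsym_i \vlte q) \vlan (r \vlpa \mathit{Z}_i)\bigr),
\]
where $\mathit{Z}_i$ is a zero-test gadget. We then set
\[
\FORComp \df \nu x.\bigl(\FORAcc \vlan \textstyle\bigwedge_{\ins\in I} (\encIns{\ins} \vlpa x)\bigr),
\]
or dually the $\mu$-form read from the left. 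The $\nu$-branching then lets the proof unfold any finite number of instruction uses, and the $\FORAcc$ component closes the computation once $t$ is reached.

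The $\Leftarrow$ direction comes first. Given a halting run from $s_d(\vec n)$ to $t(f_d(\vec n))$ of length $N$, and a proof of $\Gamma,\tup(f_d(\vec n)),t$, we build the derivation bottom-up. At each machine step we unfold $\FORComp$, pick the conjunct of the current instruction, and apply $\vlte$ so that $\neg p$ consumes the state atom. The remaining context is passed upward, which rewrites the sequent to the next configuration. For a $\JZDEC$ on zero we discharge the zero test. When the run ends we use the $\FORAcc$ branch, and the given derivation of $\Gamma,\tup(\ldots),t$ finishes the proof. Since the $\nu^\beta$ rule needs premisses for every $\gamma<\beta$, we must check that each approximant $\nu^\gamma$ with $\gamma$ large enough still allows $N$ unfoldings, and that small $\gamma$ are harmless. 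I expect to handle small $\gamma$ by placing $\top$-like escapes or by reading $\FORComp$ on the left as $\mu$, where the prover chooses $\gamma$. I would choose that left $\mu$ reading, so that only one approximant $\gamma\ge N$ is needed.

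The $\Rightarrow$ direction is the main obstacle. I will focus the given proof and prove, by induction on rank (\cref{proposition:ranks}), an invariant: every derivable sequent of the shape $\Gamma,\tup(\vec k),p,\FORComp'$, with $\FORComp'$ an approximant of $\FORComp$, corresponds to a reachable configuration from which $t$ is reached with a derivable final sequent. The locking is what makes this work. Every formula of $\Gamma$ is $\{t,\VARkey_1\}$-locked, so it can only be decided on once $\neg t$ or $\neg\VARkey_1$ is available, or through $\FORPass$, which requires $\neg\VARacc$. In the proof, neither appears before the $\FORAcc$ branch is taken at state $t$. So in the positive phase, the only focus choices available are instruction clauses whose $\neg p$ matches the unique state atom. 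Counting atoms, using that $\MALL$ is resource-sensitive, forces the tensor splits to be the intended ones. The zero test must be shown sound, meaning it is provable exactly when no $\VARsym_i$ is present. Carrying this invariant through focussed proofs, with the locked context $\Gamma$ going only to the branch that reaches $t$, is the delicate bookkeeping step. At the end we also use that the final configuration has zeroed auxiliary counters, so that $\tup(f_d(\vec n))$ appears exactly.
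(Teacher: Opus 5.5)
Your overall plan (simulate Minsky machines by a fixed-point formula, analyse focussed proofs, use the locks to keep $\Gamma$ inert) is the paper's. However, the concrete $\FORComp$ you propose fails in each of its components, and these are exactly what the lemma is about.

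(1) As written, $\FORComp$ sits in the one-sided sequent $\Gamma,\tup(\vec n),s_d,\FORComp$, where $\nuord\alpha$ and $\vlan$ are invertible. Any proof therefore yields one of $\Gamma,\tup(\vec n),s_d,\FORAcc$. That sequent is unprovable for every locked $\Gamma$, since nothing can consume $s_d$. Yet for $\Gamma=\{(\neg t\vlte 1)\vlor\FORPass\}$ and $f_d(\vec n)=0$ the right-hand side $\Gamma,t$ is provable. Meanwhile the $x$-branches are vacuous: they bottom out in $\nuord 0 x(\cdot)$, which behaves as $\top$. So a $\nu$-formula, and a fortiori one with $\top$-escapes, cannot express that a run \emph{reaches} the accepting state. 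The prover must choose both the approximant and the instruction, so you need $\mu$ and $\vlor$ in the one-sided sequent, as in the paper's $\FORComp=\muord\alpha x\,[T\vlor\bigoplus_{\ins}\encIns{\ins}]$. Reading your $\nu$-formula ``from the left'', i.e.\ putting its negation in the sequent, does not help: it dualises the clauses too (e.g.\ $p\vlpa(\neg q\vlte\neg{\VARsym_i})$ for $\INC(p,i,q)$), and they no longer consume $\tup(\vec n),s_d$.

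(2) Your $\JZDEC$ clause has its connectives misplaced. In $(\neg{\VARsym_i}\vlte q)\vlan(r\vlpa Z_i)$, the $\vlan$ demands \emph{both} the decrement branch and the zero branch, which are mutually exclusive. Also, $Z_i$ shares one multiplicative context with the continuation from $r$, so the copies of $\VARsym_i$ can simply be used by the continuation and the test checks nothing. The working shape is $(\neg{\VARsym_i}\vlte(q\vlpa x))\vlor(Z_i\vlan(r\vlpa x))$, with $x$ inside the clause rather than in $\encIns{\ins}\vlpa x$. Here $\vlor$ gives the prover the choice. The $\vlan$ gives $Z_i$ its own copy of the context, containing no approximant of $\FORComp$. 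In that copy $Z_i$ deletes the $\VARsym_j$ with $j\neq i$ and ends with $\FORAcc$, which is discharged against the $\FORPass$ disjuncts of $\Gamma$. That, not ending the main run, is the role of $\FORAcc$, and it is why $\Gamma$ must be locked with a $\FORPass$ disjunct.

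(3) Termination also fails. Closing the run with $\FORAcc$ leaves $\Gamma,\tup(\vec m),t,\neg\VARacc,\VARacc$ instead of $\Gamma,\tup(\vec m),t$. These are not equiprovable, since there is no weakening or mix; the $\Gamma$ above with $\vec m=0$ separates them. So the given derivation of $\Gamma,\tup(f_d(\vec n)),t$ cannot be plugged in, and your $\Leftarrow$ direction fails.

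Your $\Rightarrow$ invariant breaks as well. The locked disjunct $\neg t\vlte A$ is opened by the \emph{atom} $t$, not by $\neg t$. You make $t$ the machine's accepting state while an approximant of $\FORComp$ is still in the sequent. So formulas of $\Gamma$ become focusable mid-run and can feed atoms back into the simulation; for instance, $(\neg t\vlte(s_d\vlpa B))\vlor\FORPass$ restarts the machine. The invariant you intend to carry through the focussed proofs is therefore false.

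The paper avoids both problems as follows. It gives the $l$ machines disjoint state sets with their own accepting states $t_j$. It adds the terminal disjunct $T=\bigoplus_j(\neg{t_j}\vlte t)$, which contains no $x$. The atom $t$ then appears exactly when the last approximant of $\FORComp$ is consumed, so $\Gamma$ stays locked throughout the simulation and the residual sequent is literally $\Gamma,\tup(\vec m),t$. With this encoding, the focussed analysis becomes a routine induction on the approximant $\beta$ of $\FORComp_\beta$, generalised over the current state $p$; this is the paper's auxiliary lemma.
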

\begin{proof}
    For each $j=1,\ldots,l$, fix a Minsky machine $\MinMach_j = (Q_j,s_j,t_j,I_j)$ that computes $f_j$. Without loss of generality, we assume that the sets $Q_1,\ldots,Q_l$ are disjoint. There is some $N \in \omega$ such that the counters No.~$(N+1)$, No.~$(N+2)$ etc.~are used by neither of $\MinMach_i$'s (i.e.~the instructions of $\MinMach_i$'s involve only the first $(N+1)$ counters).
    
    Fix a variable $\UniNumA$. We encode a Minsky machine instruction $\mathit{ins}$ by a formula $\encIns{\ins}$ defined by,
    \begin{itemize}
        \item $\encIns{\INC(p,i,q)} \df \neg p \vlte (\VARsym_i \vlpa q \vlpa x)$
        \item $\encIns{\JZDEC(p,i,q,r)} \df \neg p \vlte [(\neg \VARsym_i  \vlte (q \vlpa x)) \vlor (Z_i \vlan (r \vlpa x))]$
    \end{itemize}
    where $Z_i$ is the ``zero-check'' formula given by:
    \[
    Z_i \df \mu y \left(\FORAcc \vlor (\neg r_0 \vlor \ldots \vlor \neg r_{i-1} \vlor \neg r_{i+1} \vlor \ldots \vlor \neg r_N) \vlte y\right).
    \]
    Now define,
    \[
    \FORComp_\beta \df \muord \beta x \left[T \vlor \bigoplus\limits_{j=1}^l\bigoplus\limits_{\ins \in I_j} \encIns{\ins} \right]
    \]
    where $T \df (\neg t_1 \vlte t) \vlor \ldots \vlor (\neg t_l \vlte t)$ and $t$ is a fresh symbol. The desired formula $\FORComp$ is defined as $\FORComp_\alpha$.

    The desired statement follows from a more general lemma.
    \begin{lemma}\label{lemma:FORComp-beta}
        Given $p \in Q_d$, consider a sequent of the form 
        \begin{equation}\label{sequent:FORComp-beta}
            \Gamma, \tup(\vec{n}),p, \FORComp_\beta.
        \end{equation}
        It is provable if and only if so is a sequent of the form 
        \[
        \Gamma, \tup(\vec{m}),t
        \]
        such that $t_d(\vec{m},0,0,\ldots)$ is reachable from $p(\vec{n},0,0,\ldots)$---in less than $\beta$ steps if $\beta$ is finite (while the number of steps can be arbitrary for infinite $\beta$).
    \end{lemma}
    The proof of the latter, placed in \cref{proof:lemma:FORComp-beta}, is by induction on $\beta$, and it consists of a straightforward analysis of focussed proofs, in the system $\muMALLFord\alpha$, of \eqref{sequent:FORComp-beta}, exploiting \cref{focussing-completeness}. Finally, take $p=s_d$ and $\beta=\alpha$ in the statement of \cref{lemma:FORComp-beta}. Since $\MinMach_d$ computes $f_d$, the statement of \cref{lemma:FORComp} follows.
\end{proof}

It is pertinent to point out that we now already obtain:

\begin{proposition}\label{proposition:Sigma01-hardness}
    Provability in $\muMALLord \alpha$ is $\Sigma^0_1$-hard. 
\end{proposition}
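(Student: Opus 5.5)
We reduce the halting problem, i.e.\ membership in the $\Sigma^0_1$-complete set $\{n \mid \varphi_n(n)\ \text{converges}\}$, to provability in $\muMALLord\alpha$ using \cref{lemma:FORComp} with $l=1$. Since a many-one reduction is in particular a Turing reduction, this suffices. Given $n$, let $f$ be the partial computable function on one argument (with one output counter) defined by $f(m) = 0$ if $\varphi_m(m)$ converges and undefined otherwise; this is computable uniformly, so we fix one Minsky machine $\MinMach$ for it once and for all. This yields a single formula $\FORComp$ and symbols $s_1,t$, all independent of $n$.

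\textbf{Step 1 (the instance).} We map $n$ to the $\muMALL$ sequent $\tup(n), s_1, \FORComp, \neg t$. Recall that $\tup(n)$ consists of $n$ copies of $c_0$. Taking $\Gamma$ to be empty (which is trivially $\{t,\VARkey_1\}$-locked), \cref{lemma:FORComp} is stated for sequents $\Gamma, \tup(\vec n), s_1, \FORComp$. The extra $\neg t$ must therefore be absorbed. Two options are available.
\begin{enumerate}
\item Make $\neg t$ part of $\Gamma$ by choosing a locked formula such as $(\neg t \vlte 1)\vlor\FORPass$. This is $\{t,\VARkey_1\}$-locked, since $\neg p_i \vlte A_i$ with $p_i = t$.
\item Re-run the argument of \cref{lemma:FORComp-beta} directly.
\end{enumerate}
We take the first option. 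Set $\Gamma \df (\neg t\vlte 1)\vlor \FORPass$, so the instance becomes $\Gamma, \tup(n), s_1, \FORComp$, which is computable from $n$.

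\textbf{Step 2 (adequacy).} By \cref{lemma:FORComp}, the instance is provable iff $f(n)$ is defined and $\Gamma, \tup(f(n)), t$ is provable. When $f(n)$ is defined, $f(n)=0$ and $\tup(0)$ is empty, so it remains to check that $\Gamma, t$ is provable. This holds by an $\vlor$ step selecting the left disjunct, followed by a $\vlte$ step with premisses $\neg t, t$ (an identity) and $1$. The instance is therefore provable iff $\varphi_n(n)$ converges.

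\textbf{Main obstacle.} The only delicate point is the shape of the output and the side conditions of \cref{lemma:FORComp}, namely the locking condition and the zeroed counters. We must choose $f$ with output $0$ so that no stray $c_i$ atoms remain, and ensure $\Gamma$ fits the locked form exactly. If the locked form turns out to be awkward, we would instead apply \cref{lemma:FORComp-beta} with $\Gamma=\emptyset$ and analyse focussed proofs of $\Gamma,t$ directly. Since $\neg t$ is negative, any proof of $\neg t, t$ ends in an identity, so this fallback is straightforward.
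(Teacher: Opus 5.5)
Your proposal is correct and matches the paper's proof. The paper likewise takes $f_1(n)=0$ on a $\Sigma^0_1$-complete set (undefined elsewhere) and applies \cref{lemma:FORComp} with the locked context $\FORPass \vlor (\neg t \vlte \mathbf{1})$, which is your $\Gamma$ up to the order of disjuncts; you also spell out why $\Gamma, t$ is provable, which the paper leaves implicit.
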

\begin{proof}
    Take a $\Sigma^0_1$-complete set $A$ and consider the function
    \[
    f_1(n) \df 
    \begin{cases}
        0 & n \in A ; \\
        \text{undefined} & n \notin A . \\
    \end{cases}
    \]
    Applying \cref{lemma:FORComp} from here, we have that the sequent $\FORPass \vlor (\neg t \vlte \mathbf{1}), \VARsym_0^n,s_1,\FORComp$ is provable iff $n \in A$.
\end{proof}

\subsection{Encoding computable infinitary sentences in $\muMALLord \alpha$}

The second stage of our construction is simulating the inductive definition of truth of an infinitary proposition sentence within $\muMALLord \alpha$. The following lemma serves this purpose.

\begin{lemma}\label{lemma:FORInd}
    Let $a_0 \in \KleeneO$. There exists a formula $\FORInd$ such that, for any $\{\VARkey_1\}$-locked multiset $\Gamma$ and any $\UniNumA \in \omega$, the sequent
    \[
    \Gamma, \VARsym_0^\UniNumA, \FORInd
    \]
    is derivable iff $\UniNumA = \langle \varepsilon, a, e\rangle$ and one of the following holds:
    \begin{enumerate}
        \item $\ips{\UniNumA} = \top$ (i.e.~$\ordO{a}=0$ and $e>0$);
        \item $1 \lessO a \lessO a_0$ and there is an index $\UniNumB \in \omega$ of a disjunct of $\ips{\UniNumA}$ such that the sequent
        \[
        \Gamma, \VARsym_0^\UniNumB, \VARkey_1
        \]
        is derivable;
        \item $1 \lessO a \lessO a_0$ and, for each $\UniNumB \in \omega$ being an index of a conjunct of $\ips{\UniNumA}$, the sequent 
        \[
        \Gamma, \VARsym_0^\UniNumB, \VARkey_1 
        \]
        is derivable.
    \end{enumerate}
\end{lemma}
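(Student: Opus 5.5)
We plan to build $\FORInd$ as a $\{\VARkey_1\}$-locked-style "dispatcher" that first computes, via \cref{lemma:FORComp}, what kind of index $\UniNumA$ is, and then branches into a disjunctive or conjunctive gadget. Concretely, we apply \cref{lemma:FORComp} to a small finite family of partial computable functions. The first, $f_1$, decodes $\UniNumA$: it converges only if $\UniNumA=\langle\varepsilon,a,e\rangle$ with either ($a=1$, $e>0$) or ($1\lessO a\lessO a_0$). The latter condition is c.e., since $\lessO$ restricted below the fixed $a_0$ is c.e. The output of $f_1$ then selects a branch by leaving a marker in the form of a distinguished counter value or accepting state. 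The second, $f_2(\UniNumA,k)$, converges iff $\UniCF(e,k)$ converges and $k=\langle 1-\varepsilon,b,e'\rangle$ with $b\lessO a$, and outputs $k$ on counter $0$. In other words, $f_2$ checks that $k$ is an index of a disjunct (for $\varepsilon=0$) or a conjunct (for $\varepsilon=1$) of $\ips{\UniNumA}$. We would rather use separate accepting states $t_\top,t_\vee,t_\wedge$ produced by $f_1$, so that the tail formula can case-split by key.

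The formula is then assembled as follows. $\FORInd$ is $s_1 \vlpa$-combined with $\FORComp$ together with locked continuation formulas placed in the context:
\begin{itemize}
\item a $\top$-branch unlocked by $t_\top$ that closes with $\FORAcc$-style acceptance;
\item a $\vee$-branch unlocked by $t_\vee$ that guesses $\UniNumB$ via $\mu$-iteration generating $\VARsym_0^{\UniNumB}$ (e.g.\ $\mu z(\ldots\vlor(\neg\VARsym\ldots)\vlte z)$ style, as with $Z_i$), then runs $f_2$ on $(\UniNumA,\UniNumB)$ and, on acceptance, replaces the state by $\VARkey_1$ with counter $\UniNumB$;
\item a $\wedge$-branch that uses a $\nu$ (or $\vlan$ over a generator $\nu y(\ldots\vlan y)$) to range over all $\UniNumB\in\omega$, each branch computing a total check "is $\UniNumB$ a conjunct?". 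If yes, it proceeds to $\Gamma,\VARsym_0^\UniNumB,\VARkey_1$; if not, it closes trivially.
\end{itemize}
Since "is a conjunct" is only c.e., the $\wedge$-branch will instead quantify over pairs $(\UniNumB,\text{step bound } s)$ and use a total computable function checking convergence of $\UniCF(e,\UniNumB)$ within $s$ steps. This way non-conjuncts always yield a trivially provable branch, via $\FORPass$/$\FORAcc$.

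Adequacy will be proved using \cref{focussing-completeness}. We analyse focussed proofs of $\Gamma,\VARsym_0^\UniNumA,\FORInd$, observing that every formula of $\Gamma$ is $\{\VARkey_1\}$-locked. No such formula can be decided until $\VARkey_1$ or $\VARacc$ appears, because a decision on it forces $\neg\VARkey_1$ or $\neg \VARacc$ to be matched against an atom by $\identity$, and these atoms are absent. So the only possible focus is on the computation formulas, and by \cref{lemma:FORComp} (whose hypothesis requires $\{t,\VARkey_1\}$-locked contexts, which is why all intermediate continuations must themselves be locked by the relevant fresh state symbols) each phase is faithful. The converse direction is a direct construction of the proof.

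The main obstacle is the conjunctive case. The $\nuord{\alpha}$/$\vlan$ generator must produce exactly the branches $\VARsym_0^\UniNumB$ for each $\UniNumB$, without leftover resources, and the non-conjunct branches must close regardless of $\Gamma$. Linearity forbids simply discarding $\Gamma$. We therefore expect to need a $\top$-absorbing gadget, e.g.\ a disjunct $\neg r\vlte\top$ unlocked by a rejecting state $r$, to kill the branch. We would first try to verify that this $\top$ trick interacts correctly with the locking discipline in the focussing analysis.
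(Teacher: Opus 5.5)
Your plan is essentially the paper's construction: classify $\UniNumA$ with \cref{lemma:FORComp}, guess the subformula index with a $\mu$-generator writing to fresh counters, range over all pairs (index, witness) with a $\nu$-generator checked by a total function built from a decidable $\Witn$, and verify adequacy by a focussed-proof analysis exploiting the locks. Two details differ slightly: the paper marks the branch by counter output, $f_2(\UniNumA)=(\UniNumA,1,0)$ or $(\UniNumA,0,1)$, which also keeps $\UniNumA$ on $\VARsym_0$ (\cref{lemma:FORComp} yields only one accepting symbol $t$, and the generator must not write to $\VARsym_0$); and the linearity obstacle you raise needs no $\top$ gadget, though one would also work, because every locked formula has a $\FORPass$ disjunct, so once $\FORAcc$ is produced the whole context can be discharged---this is how the paper closes non-witness branches, and it is also why your insistence that the continuation formulas themselves be locked is correct, since they sit in the context of the zero-test branches inside $\FORComp$ and must be discharged there too.
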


\begin{proof}

First, let us apply \cref{lemma:FORComp} to the following three computable functions:
\begin{enumerate}
    \item Let $f_1(\UniNumA) \df 0$ iff $\ips{\UniNumA} = \top$ (undefined otherwise).
    \item Let $f_2(\UniNumA) \df (\UniNumA,1,0)$ iff $\UniNumA \in S^\Sigma_a$ for some $a$ such that $1 \lessO a \lessO a_0$; let $f_2(\UniNumA) \df (\UniNumA,0,1)$ iff $\UniNumA \in S^\Pi_a$ for some $a$ such that $1 \lessO a \lessO a_0$ (undefined otherwise).
    \item Let us fix a decidable ternary predicate $\Witn \subseteq \omega \times \omega \times \omega$ such that $\exists \UniNumC \, \Witn(\UniNumA,\UniNumB,\UniNumC)$ is true iff $\UniNumB$ is an index of a conjunct/disjunct of $\ips{\UniNumA}$. Let $f_3(\UniNumA,\UniNumB,\UniNumC) \df (\UniNumB,1,0)$ iff $\Witn(\UniNumA,\UniNumB,\UniNumC)$ is true, otherwise $f_3(\UniNumA,\UniNumB,\UniNumC) \df (0,0,1)$.
\end{enumerate}
Note that $\Witn(\cdot,\cdot,\cdot)$ exists because the predicate ``$\UniNumB$ is an index of a conjunct/disjunct of $\ips{\UniNumA}$'' is recursively enumerable.

Apply \cref{lemma:FORComp} to obtain a formula $\FORComp$ and propositional symbols $s_1,s_2,s_3,t$ satisfying the corresponding properties. 
Now, we use these formulae to define the required formula $\FORInd$:
\begin{align*}
    \FORInd &\df \FORBase \vlor \FORStep \\
    \FORBase &\df s_1 \vlpa \FORComp \vlpa (\neg t \vlte \FORAcc) \\
    \FORStep &\df s_2 \vlpa \FORComp \vlpa (\neg t \vlte ((\neg \VARsym_1 \vlte \FORStep_\exists)\vlor (\neg \VARsym_2 \vlte \FORStep_\forall))) \\
    \FORStep_\exists &\df \mu x. ((\VARsym_1 \vlpa x) \vlor (\VARsym_2 \vlpa x) \vlor \VARkey_2) \vlpa \FORNext_\exists \\
    \FORStep_\forall &\df \nu x. ((\VARsym_1 \vlpa x) \vlan (\VARsym_2 \vlpa x) \vlan \VARkey_2) \vlpa \FORNext_\forall \\
    \FORNext_\exists &\df \neg \VARkey_2 \vlte (s_3 \vlpa \FORComp \vlpa (\neg t \vlte \neg \VARsym_1 \vlte \VARkey_1)) \\
    \FORNext_\forall &\df \neg \VARkey_2 \vlte (s_3 \vlpa \FORComp \vlpa (\neg t \vlte ((\neg \VARsym_1 \vlte \VARkey_1) \vlor (\neg \VARsym_2 \vlte \FORAcc))))\
\end{align*}
Again, the construction's correctness is proved by a straightforward (and tedious) analysis of focussed proofs, in the system $\muMALLFord\alpha$, exploiting \cref{focussing-completeness}. 
We provide a proof of correctness in \cref{proof:lemma:FORInd}.
\end{proof}

Let us explain the role of each of the formulae occurring in the proof of \cref{lemma:FORInd} at an informal level:
\begin{itemize}
    \item The formula $\FORBase$ checks whether $\ips{\UniNumA}$ is $\top$ and outputs the accepting formula $\FORAcc$ if this is the case. 
    \item
    The formula $\FORStep$ checks whether $\UniNumA \in S^\Sigma_a \cup S^\Pi_a$ for $1 
    \lessO \ordO{a} \lessO a_0$ or not and then switches to $\FORStep_\exists$ ($\FORStep_\forall$) if $\UniNumA\in S^\Sigma_a$ ($\UniNumA\in S^\Pi_a$ respectively). 
    \item
    The formula $\FORStep_\exists$ generates $\VARsym_1^\UniNumB$ and $\VARsym_2^\UniNumC$ for some $\UniNumB,\UniNumC \in \omega$. Then, $\FORNext_\exists$ checks whether $\Witn(\UniNumA,\UniNumB,\UniNumC)$ is true and, if it is so, outputs $\VARsym_0^\UniNumB$. This corresponds to choosing the disjunct $\ips{\UniNumB}$ of the formula $\ips{\UniNumA}$ and forming a sequent $\Gamma, \VARsym_0^\UniNumB, \VARkey_1$ for it. 
    \item 
    Dually, the formula $\FORStep_\forall$ generates $\VARsym_1^\UniNumB,\VARsym_2^\UniNumC$ for all $\UniNumB,\UniNumC \in \omega$. Then, $\FORNext_\forall$ checks whether $\Witn(\UniNumA,\UniNumB,\UniNumC)$ is true. If it is false, $\FORNext_\forall$ outputs $\FORAcc$, and it outputs $\VARsym_0^\UniNumB$ otherwise. This means that we form the sequent $\Gamma, \VARsym_0^\UniNumB, \VARkey_1$ for each conjunct $\ips{\UniNumB}$ of the formula $\ips{\UniNumA}$ while we immediately accept if $\ips{\UniNumB}$ is not a conjunct of $\ips{\UniNumA}$.
\end{itemize}

\subsection{Proof of hardness}

\tikhon{focussing: the rightmost formula of the right zone must be active.}

Let us now start assembling the proof of our hardness result, \cref{theorem:lower-bound}.
We start with a folklore fact about Kleene's $\KleeneO$:

\begin{fact}
    There exist computable partial functions $\mult(\cdot,\cdot)$ and $\exp(\cdot,\cdot)$ such that, if $a,b \in \KleeneO$, then $\mult(a,b), \exp(a,b) \in \KleeneO$ and $\ordO{\mult(a,b)} = \ordO{a}\cdot{\ordO{b}}$, $\ordO{\exp(a,b)} = \ordO{a}^{\ordO{b}}$. 
\end{fact}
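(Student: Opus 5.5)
The approach is to build the functions by \emph{effective transfinite recursion} on notations. Concretely, we write down the usual recursive clauses of ordinal arithmetic in the right-hand argument, and obtain an index for them via Kleene's recursion theorem together with the $s$-$m$-$n$ theorem. We first construct addition $\mathrm{add}(a,b)$, then define $\mult$ from it, and then $\exp$ from $\mult$, using the same template each time.

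For addition we set
\[
\mathrm{add}(a,1) \df a, \qquad \mathrm{add}(a,2^b) \df 2^{\mathrm{add}(a,b)}, \qquad \mathrm{add}(a,3\cdot 5^e) \df 3\cdot 5^{e'}.
\]
Here $e'$ is computed from $a$, $e$ and an index of $\mathrm{add}$ itself, via $s$-$m$-$n$, so that $\varphi_{e'}(n) = \mathrm{add}(a,\varphi_e(n))$. The recursion theorem then gives a single partial computable function satisfying these equations. Multiplication follows the same pattern:
\[
\mult(a,1) \df 1, \qquad \mult(a,2^b) \df \mathrm{add}(\mult(a,b),a), \qquad \mult(a,3\cdot 5^e) \df 3\cdot 5^{e'} \text{ with } \varphi_{e'}(n)=\mult(a,\varphi_e(n)).
\]
The degenerate case $a=1$ (the notation for $0$) is handled separately by outputting $1$; this is decidable because $1$ is the unique notation for $0$. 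Exponentiation is analogous:
\[
\exp(a,1)\df 2, \qquad \exp(a,2^b)\df \mult(\exp(a,b),a),
\]
with the limit clause as before. The degenerate bases $a\in\{1,2\}$ (notations for $0$ and $1$) are treated by explicit case distinction.

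Correctness is then proved by induction on $\ordO{b}$, for fixed $a\in\KleeneO$, establishing three claims simultaneously:
\begin{enumerate}[(i)]
  \item the output lies in $\KleeneO$;
  \item its value is the intended ordinal;
  \item the function is strictly $\lessO$-monotone in $b$, i.e.\ $b' \lessO b$ implies $f(a,b') \lessO f(a,b)$. For $\mult$ this needs $\ordO{a}>0$, and for $\exp$ it needs $\ordO{a}>1$.
\end{enumerate}
Successor steps use the inductive hypothesis together with, for $\mult$ and $\exp$, the already-established properties of the previous operation. At limit steps, (iii) applied to the sequence $\varphi_e(0)\lessO\varphi_e(1)\lessO\cdots$ shows that $\varphi_{e'}$ is total and $\lessO$-increasing, so $3\cdot 5^{e'}$ is a legitimate notation. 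Its value is correct by continuity of ordinal addition, multiplication and exponentiation in the right argument.

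I expect the main obstacle to be (iii). Since $\lessO$ is only a tree order, it is not enough to know that the ordinal values increase: we must check that each output actually lies on the \emph{same path} of $\KleeneO$ as the later ones. I would prove this by induction on $b$, for fixed $b'\lessO b$, in the following steps.
\begin{enumerate}
  \item For addition, the successor case uses $\mathrm{add}(a,b)\lessO 2^{\mathrm{add}(a,b)}$.
  \item Still for addition, the limit case uses the fact that $b'\lessO 3\cdot5^e$ means $b'\lessO\varphi_e(n)$ for some $n$; then $\mathrm{add}(a,b') \lessO \mathrm{add}(a,\varphi_e(n)) = \varphi_{e'}(n) \lessO 3\cdot 5^{e'}$.
  \item For $\mult$ (and then $\exp$), we additionally need the auxiliary fact that $c \lessO \mathrm{add}(c,a)$ whenever $\ordO{a}>0$, and more generally that $\mathrm{add}(c,\cdot)$ maps the notations below $a$ into notations between $c$ and $\mathrm{add}(c,a)$. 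This follows from the addition case, applied along the path below $a$.
\end{enumerate}
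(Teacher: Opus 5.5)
Your overall route (effective transfinite recursion via the recursion theorem and $s$-$m$-$n$, building addition, then $\mult$, then $\exp$, with a simultaneous induction on membership, value and $\lessO$-monotonicity) is the one the paper points to; it only cites Rogers, Theorem XVII and Exercise 11-58. With the clauses you wrote, however, claim (iii) for $\exp$ fails at successor steps. This is exactly the path problem you flagged.

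\textbf{Where (iii) breaks.} The successor step needs $x \lessO \mult(x,a)$ for $x=\exp(a,b)$ and $\ordO{a}\ge 2$. For addition the analogous fact is automatic, because $\mathrm{add}(y,1)=y$ literally. Multiplication does not reproduce its first argument: $\mult(x,2)=\mathrm{add}(\mult(x,1),x)=\mathrm{add}(1,x)$. For infinite $x$ this is in general a \emph{different} notation for $\ordO{x}$. For instance, for $x=3\cdot 5^{e}$ your limit clause gives $\mathrm{add}(1,x)=3\cdot 5^{e'}$ with $e'$ an $s$-$m$-$n$ index distinct from $e$, even though $\varphi_{e'}=\varphi_e$ extensionally. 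Now $\mathrm{add}(1,x)\lessO\mult(x,a)$ by (iii) for $\mult$, and the $\lessO$-predecessors of a notation contain exactly one notation for each smaller ordinal. So $x\lessO\mult(x,a)$ would force $x=\mathrm{add}(1,x)$, and it fails in general.

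\textbf{A concrete failure.} Take $\ordO{a}=\omega$. Then $\exp(a,2)=\mult(2,a)$ is a limit notation $x_1$ for $\omega$. But $\exp(a,4)=\mult(x_1,a)$ has $\mathrm{add}(1,x_1)\neq x_1$ as its notation for $\omega$, so $x_1\lessO\exp(a,4)$ fails. Hence for $b=3\cdot 5^{e}$ with $\varphi_e(n)=2\uparrow\uparrow n$, the sequence $\exp(a,\varphi_e(n))$ is not $\lessO$-increasing, and $\exp(a,b)\notin\KleeneO$. This is precisely the regime the paper uses to build $g(n)$, namely $\exp(a_1,\cdot)$ with $\ordO{a_1}=\omega$ applied to limit notations. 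Your step 3 supplies only the additive fact $c\lessO\mathrm{add}(c,a)$. What $\exp$ actually needs is the multiplicative analogue $c\lessO\mult(c,a)$ for $\ordO{a}\ge 2$, and that is false for your clauses.

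\textbf{The repair.} Make $1$ a genuine left identity for addition by adding the clause $\mathrm{add}(1,b)\df b$, or set $\mult(a,2)\df a$ directly. The first is harmless: for $a\neq 1$ the recursion for $\mathrm{add}(a,\cdot)$ never calls $\mathrm{add}(1,\cdot)$, and (i)--(iii) hold trivially when $a=1$. With either change $\mult(x,2)=x$. Since $2\le_{\KleeneO} b$ whenever $\ordO{b}\ge 1$, (iii) for $\mult$ then gives $x\le_{\KleeneO}\mult(x,b)$ for all $\ordO{x},\ordO{b}\ge 1$. In particular $\exp(a,b)=\mult(\exp(a,b),2)\lessO\mult(\exp(a,b),a)$ whenever $\ordO{a}\ge 2$, and the rest of your induction then goes through as you describe.
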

This fact is left as Exercise 11-58 in \cite{Rogers67} and is proved in the same way as existence of addition on ordinal notations \cite[Theorem XVII]{Rogers67}.

Recall that, at the beginning of the section, we fixed some computable $\alpha \ge \omega$ (and system $\muMALLord\alpha$). 
Let us now fix $a_1,a_2 \in \KleeneO$ such that $\ordO{a_1}=\omega$ and $\ordO{a_2}=\deg\alpha$. Let $e_0$ be the index of the computable function $g$ defined as follows:
\begin{align}\label{equation:g(n)}
    g(n) &\df \exp(a_1,\exp(a_1,\mult(a_2,2\uparrow\uparrow n)) \ \text{for $n \in \omega$}
\end{align}
Recall that $2 \uparrow\uparrow n = \underbrace{2^{2^{\ldots^2}}}_{n~\text{times}}$ is the unique notation of the natural number $n$. Clearly, $\ordO{g(n)} = \omega^{\omega^{(\deg\alpha) n}}$. 

Let $a_0 \df 3 \cdot 5^{e_0}$. This is a notation for $\omega^{\alpha^\omega}$, by \cref{proposition:omega-power}. Plugging this $a_0$ into \cref{lemma:FORInd}, we obtain the formula $\FORInd$ satisfying the properties stated in that lemma. 
Using it, let us prepare ingredients for the main construction. 

Recall from \cref{definition:rho-formula} the formulas $\Resource_n$, for $n<\omega$, given by $\Resource_0 = x_0$ and $\Resource_{n+1} = \muord \alpha x_{n}.\left(\UniNumA_{n+1} \vlpa \Resource_n\right)$.

\begin{itemize}
    \item Let $\lock(A) \df \FORPass \vlor(\neg \VARkey_1 \vlte A)$ for each $A \in \Fm$;
    \item Let us define a function $\aug(\cdot)$ whose domain is the set of formulae built using $\vlpa$ and $\muord \beta$ only:
    \begin{itemize}
        \item $\aug(c) \df \lock(c)$ for $c$ being a variable, a symbol or a constant;
        \item $\aug(B_1 \vlpa B_2) \df \lock(\FORInd \vlpa \aug(B_1) \vlpa \aug(B_2))$;
        \item $\aug(\muord \beta y. D) \df \lock \left( \muord \beta y. (\FORInd \vlpa  \aug(D))\right)$.
        \end{itemize}
    \item Let $\hat{D}$ be obtained from $\aug(D)$ by substituting $\FORInd$ for $x$ for each $x \in \Var$.
    \item Let $\rightsquigarrow$ be the binary relation defined as follows: $\muord \beta x\, A(x) \rightsquigarrow A(\muord \gamma x\, A(x))$ for $\gamma < \beta$; $A_1 \vlpa A_2 \rightsquigarrow A_i$ for $i=1,2$. Let $\mathcal{E} \df \{A \mid \exists n\ \Resource_n \rightsquigarrow^\ast A\}$.
\end{itemize}


\begin{lemma}\label{lemma:lower-bound-main}
    Given $E_1,\ldots,E_l \in \mathcal{E}$, consider the sequent:
    \begin{equation}\label{sequent:main}
        \hat E_1,\ldots,\hat E_l, \VARsym_0^{\UniNumA}, \VARkey_1
    \end{equation}
    \begin{enumerate}
        \item\label{item:deriv-imp-true} If (\ref{sequent:main}) is derivable, then $\UniNumA \in \TrueIPS{a_0}$;
        \item\label{item:true-imp-deriv} If $\UniNumA = \langle \varepsilon,a,e\rangle \in \TrueIPS{a_0}$ and $\rank(E_1,\ldots,E_l) > \ordO{a}$, then (\ref{sequent:main}) is derivable.
    \end{enumerate}
\end{lemma}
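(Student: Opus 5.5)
Both items will be proved by analysing focussed proofs in $\muMALLFord\alpha$, using \cref{focussing-completeness} and \cref{cut-elim-mumall}. The basic observation is that every $\hat E_i$ is $\{\VARkey_1\}$-locked. Each one has the form $\lock(\cdot) = \FORPass \vlor (\neg\VARkey_1 \vlte \cdot)$, since $\hat{x} = \lock(\FORInd)$ after substitution. So in (\ref{sequent:main}) the only way to make progress is to decide on some $\hat E_i$ and choose the $\neg\VARkey_1$ branch, consuming the unique $\VARkey_1$. Choosing the $\FORPass$ branch forces $\neg\VARacc$ and $\VARacc$ to be matched by identity steps, and there is no $\neg \VARacc$ available. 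After unlocking, the focus releases onto the negative body. That body is a $\vlpa$, or a $\muord\beta$ that behaves like a $\vlpa$ once unfolded: the $\mu$ is positive, but it is still decided on under a lock. The result is a sequent containing exactly one copy of $\FORInd$ together with the augmented immediate $\rightsquigarrow$-successors of $E_i$, all locked again.

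The key invariant is therefore the following. From (\ref{sequent:main}) every focussed proof passes to a sequent $\Gamma', \VARsym_0^\UniNumA, \FORInd$, where $\Gamma'$ consists of the $\hat E_j$ with $j\neq i$ together with the hats of the successors $E'$ of $E_i$. Here $E_i \rightsquigarrow E'$, and the chosen ordinal $\gamma<\beta$ may be arbitrary because the $\mu$ step is synchronous. A leaf $E_i = x$ yields $\hat{x} = \lock(\FORInd)$, giving $\Gamma, \VARsym_0^\UniNumA, \FORInd$ directly. Then \cref{lemma:FORInd} applies, since $\Gamma'$ is $\{\VARkey_1\}$-locked. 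It reduces the sequent to truth of $\top$, or to sequents $\Gamma', \VARsym_0^\UniNumB, \VARkey_1$ for one disjunct (respectively all conjuncts) $\UniNumB$ of $\ips{\UniNumA}$. These again have the form (\ref{sequent:main}), with new $E$'s in $\mathcal{E}$ of strictly smaller total rank by \cref{proposition:ranks}.

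For item~\ref{item:deriv-imp-true} I will argue by induction on the height of the (wellfounded) focussed proof, or equivalently on $\rank(E_1,\ldots,E_l)$. A derivation unfolds as above and \cref{lemma:FORInd} gives case 1, 2 or 3. In cases 2 and 3 the induction hypothesis yields $\UniNumB\in\TrueIPS{a_0}$ for the relevant disjunct or for all conjuncts, so $\ips{\UniNumA}$ is true. The side condition $a\lessO a_0$ is supplied by \cref{lemma:FORInd} itself.

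For item~\ref{item:true-imp-deriv} I will argue by induction on $\ordO{a}$, with a side induction on the rank, and this is where the main obstacle lies. I must choose unfoldings so that after reaching $\FORInd$ the residual context still has rank $>\ordO{b}$ for the relevant sub-index $\UniNumB = \langle \varepsilon', b, e'\rangle$ with $b \lessO a$. To arrange this I will use the lower bound computation of \cref{lemma:rho-ranks-lower-bound}. A $\muord\beta$ formula of rank $>\ordO{a}$ can be unfolded at an approximant $\gamma$ such that the resulting sequent still has rank $>\ordO{b}$. Because the rank is given by a supremum, any $\ordO{b}<\ordO{a}<\rank$ lies below $\valuation{\cdot}{}$ at some $\gamma$. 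A $\vlpa$ is split naturally, since the natural sum is preserved. At binary nodes the chosen successor formulas must collectively keep rank $>\ordO{b}$, and this uses the fact that $\natsum$ of the remaining formulas drops by at most a ``$+1$''.

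Care is needed with the base case $\ips{\UniNumA}=\top$, which only requires some unlocking step. That step is available because $\rank>\ordO a\ge 0$ forces $l\ge1$. For conjunctions all conjuncts must be handled with the same residual sequent, so the chosen unfolding must not depend on $\UniNumB$. Since $\ips{\UniNumA}$ is a conjunction of indices with notations $b\lessO a$ and $\ordO{b}<\ordO{a}$, I will unfold once, to rank $\geq \ordO{a}$, which already exceeds every $\ordO b$. The delicate point will be showing that a single decide step loses rank by at most one successor step, so that rank $\ge \ordO{a} > \ordO{b}$ survives.
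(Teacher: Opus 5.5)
Your item~1 follows the paper's argument. A single decide on some $\hat E_i$ through the $\neg\VARkey_1$ branch reaches $\hat\Theta,\VARsym_0^{u},\FORInd$ with $\rank(\Theta)<\rank(E_1,\ldots,E_l)$, and then \cref{lemma:FORInd} plus induction on rank finish.

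The gap is in item~2, at exactly the step you flag as delicate. The mechanism you propose there does not work.
\begin{itemize}
\item Your claim that one decide step ``loses rank by at most one successor step'' fails whenever $\rank(E_1,\ldots,E_l)$ is a limit. In that case every choice of unfolding gives $\rank(\Theta)+1<\rank(E_1,\ldots,E_l)$.
\item The argument ``the rank of a $\muord\beta$-formula is a supremum, so some $\gamma$ works'' does not transfer from a single formula to the sequent, because $\natsum$ is not continuous in its arguments. For instance $\omega\natsum\omega^2=\omega^2+\omega$, yet $\omega\natsum\delta<\omega^2$ for every $\delta<\omega^2$.
\item Concretely, suppose the context holds $\mu$-formulas of ranks $\omega$ and $\omega^2$, and $\ips{u}$ is a conjunction with $\ordO a=\omega^2$ whose conjuncts have notations cofinal in $\omega^2$. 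Unfolding the rank-$\omega^2$ formula at any $\gamma$ leaves $\rank(\Theta)<\omega^2$, so the induction hypothesis cannot be applied to all conjuncts. The same failure occurs for a single disjunct with $\ordO b=\omega^2$ and $\ordO a=\omega^2+1$.
\item \cref{lemma:rho-ranks-lower-bound} does not help here. It is only needed in the final reduction, to obtain $\rank(\Resource_{n+1})>\ordO a$.
\end{itemize}

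The missing ingredient is the choice of \emph{which} formula to unfold; the paper's proof handles this by a two-case split.
\begin{itemize}
\item If some $E_i$ has successor rank, unfold that one. For $E_i=\muord\beta x.\,F(x)$, pick $\gamma$ at which the supremum defining $\rank(E_i)$ is attained; a successor supremum is always attained. Since $(\xi+1)\natsum\eta=(\xi\natsum\eta)+1$, this gives $\rank(\Theta)+1=\rank(E_1,\ldots,E_l)>\ordO a$.
\item If all $E_i$ have limit rank, they are all $\mu$-formulas, because variables and $\vlpa$-formulas have successor rank. Unfold the one whose rank has the least lowest Cantor-normal-form exponent $\ld$. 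For that summand the natural sum is continuous: $\delta_1\natsum\delta_2=\sup_{\delta<\delta_1}(\delta\natsum\delta_2)$ when $0<\ld(\delta_1)\le\ld(\delta_2)$. This yields $\gamma$ with $(\rank(F(\muord\gamma x.\,F(x)))+1)\natsum\rank(E_2,\ldots,E_l)>\ordO a$, i.e.\ $\rank(\Theta)\ge\ordO a$.
\end{itemize}
With this one-step lemma in place, your induction (on $\ordO a$ or on rank) goes through as you describe.
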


\begin{proof}
The proof is by induction on $\zeta = \rank(E_1,\ldots,E_l)$. 
    If $\zeta=0$, the rank of each $E_i$ is at least $1$, so $l=0$. In this case, (\ref{sequent:main}) is underivable, and the antecedents of both statements of the lemma are false. 

    So suppose $\zeta>0$ and let us prove \cref{item:deriv-imp-true}. 
    Assume that the sequent (\ref{sequent:main}) is provable; equivalently, b, there is a focussed proof of the sequent
    \[
    \hat E_1,\ldots,\hat E_l, \VARsym_0^{\UniNumA}, \VARkey_1  \focusedNeg \emptyZone
    \]
    in $\muMALLFord \alpha$. 
    Any proof of this sequent ends with a decision step, $\decide$. Without loss of generality, we can assume that $\hat E_1$ goes to the right zone of its premisse.

    \begin{itemize}
        \item \textit{Case 1.} $E_1$ is a variable. Then, $\hat E_1 = \lock(\FORInd)$ and the $\muMALLFord \alpha$ derivation ends as follows:
    \[
        \vlderivation{
        \vlin{\decide}{}
        {
            \hat E_1,\ldots,\hat E_l, \VARsym_0^{\UniNumA}, \VARkey_1  \focusedNeg \emptyZone
        }
        {
            \vlin{\vlor}{}
            {
                \hat E_2,\ldots,\hat E_l, \VARsym_0^{\UniNumA}, \VARkey_1 \focusedPos \hat E_1
            }
            {
                \vliin{\vlte}{}
                {
                    \hat E_2,\ldots,\hat E_l, \VARsym_0^{\UniNumA}, \VARkey_1 \focusedPos \VARkey_1^\bot \otimes \FORInd
                }
                {
                    \vlhy{\VARkey_1 \focusedPos \VARkey_1^\bot}
                }
                {
                    \vlhy{\hat E_2,\ldots,\hat E_l, \VARsym_0^{\UniNumA} \focusedPos \FORInd}
                }
            }
        }}
    \]
    Let us denote $E_2, \ldots, E_l$ by $\Theta$. The above derivation implies that $\hat \Theta, \VARsym_0^{\UniNumA} , \FORInd$ is provable in $\muMALLord\alpha$. Note that $\rank(\Theta) < \zeta$.
    \item 
    \textit{Case 2.} $E_1$ is of the form $E^\prime \vlpa E^{\prime\prime}$. Then $\hat E_1 = \lock(\FORInd \vlpa \hat E^\prime \vlpa \hat E^{\prime\prime})$ and the $\muMALLFord \alpha$ derivation ends as follows:
    \[
        \vlderivation{
        \vlin{\decide}{}
        {
            \hat E_1,\ldots,\hat E_l, \VARsym_0^{\UniNumA}, \VARkey_1  \focusedNeg \emptyZone
        }
        {
            \vlin{\vlor}{}
            {
                \hat E_2,\ldots,\hat E_l, \VARsym_0^{\UniNumA}, \VARkey_1 \focusedPos \hat E_1
            }
            {
                \vliin{\vlte}{}
                {
                    \hat E_2,\ldots,\hat E_l, \VARsym_0^{\UniNumA}, \VARkey_1 \focusedPos \VARkey_1^\bot \otimes (\FORInd \vlpa \hat E^\prime \vlpa \hat E^{\prime\prime})
                }
                {
                    \vlhy{\VARkey_1 \focusedPos \VARkey_1^\bot}
                }
                {
                    \vliq{}{}
                    {
                        \hat E_2,\ldots,\hat E_l, \VARsym_0^{\UniNumA} \focusedPos \FORInd \vlpa \hat E^\prime \vlpa \hat E^{\prime\prime}
                    }
                    {
                        \vlhy{\hat E^\prime , \hat E^{\prime\prime}, \hat E_2,\ldots,\hat E_l, \VARsym_0^{\UniNumA}, \FORInd \focusedNeg \emptyZone}
                    }
                }
            }
        }}
    \]
    Let $\Theta \df E^\prime , E^{\prime\prime}, E_2,\ldots, E_l$ in this case. Again, $\rank(\Theta)<\zeta$ because $\rank(E^\prime) \natsum \rank(E^{\prime\prime}) < \rank(E_1)$.
\item
    \textit{Case 3.} $E_1 = \muord \beta x.F(x)$. Then, $\hat E_1 = \lock \left( \muord \beta x. (\FORInd \vlpa  \aug(F(x)))\right)$. Let us denote $\FORInd \vlpa  \aug(F(x))$ by $D(x)$.  The derivation ends as follows for some $\gamma<\beta$:
    \[
        \vlderivation{
        \vlin{\decide}{}
        {
            \hat E_1,\ldots,\hat E_l, \VARsym_0^{\UniNumA}, \VARkey_1  \focusedNeg \emptyZone
        }
        {
            \vlin{\vlor}{}
            {
                \hat E_2,\ldots,\hat E_l, \VARsym_0^{\UniNumA}, \VARkey_1 \focusedPos \hat E_1
            }
            {
                \vliin{\vlte}{}
                {
                    \hat E_2,\ldots,\hat E_l, \VARsym_0^{\UniNumA}, \VARkey_1 \focusedPos \VARkey_1^\bot \otimes \muord \beta x. D(x)
                }
                {
                    \vlhy{\VARkey_1 \focusedPos \VARkey_1^\bot}
                }
                {
                    \vlin{\muord \beta}{}
                    {
                        \hat E_2,\ldots,\hat E_l, \VARsym_0^{\UniNumA} \focusedPos \muord \beta x. D(x)
                    }
                    {
                        \vliq{}{}
                        {
                            \hat E_2,\ldots,\hat E_l, \VARsym_0^{\UniNumA} \focusedPos D(\muord \gamma x. D(x))
                        }
                        {
                            \vlhy{\aug(F(x))[\muord \gamma x. D(x)/x], \hat E_2,\ldots,\hat E_l, \VARsym_0^{\UniNumA}, \FORInd \focusedNeg \emptyZone}
                        }
                    }
                }
            }
        }}
    \]
    Notice that the formula $\aug(F(x))[\muord \gamma x. D(x)/x]$ coincides with $\hat{G}$ for $G = F(\muord \gamma x. F(x))$. Let $\Theta \df G, E_2,\ldots, E_l$ in this case. Once again, $\rank(\Theta)<\zeta$ because $\rank(G) < \rank(E_1)$.

    \end{itemize}

    In each of the three cases, we have that the sequent $\hat \Theta, \VARsym_0^\UniNumA,\FORInd$ is derivable in $\muMALLord \alpha$. By \cref{lemma:FORInd}, its derivability implies that $\UniNumA = \langle \varepsilon, a, e\rangle$ and that one of the following holds:
    \begin{enumerate}[a.]
        \item $\ips{\UniNumA} = \top$ (consequently, $\UniNumA \in \TrueIPS{a_0}$);
        \item $1 \lessO a \lessO a_0$ and there is an index $\UniNumB \in \omega$ of a disjunct of $\ips{\UniNumA}$ such that the sequent
        $
        \hat \Theta, \VARsym_0^\UniNumB, \VARkey_1
        $
        is derivable;
        \item $1 \lessO a \lessO a_0$ and, for each $\UniNumB \in \omega$ being the index of a conjunct of $\ips{\UniNumA}$, the sequent 
        $
        \hat \Theta, \VARsym_0^\UniNumB, \VARkey_1 
        $
        is derivable.
    \end{enumerate}
    In the two latter situations, apply the induction hypothesis and conclude that $\ips{\UniNumB}$ is true; therefore, $\ips{\UniNumA}$ is true and hence $\UniNumA \in \TrueIPS{a_0}$.

    \medskip

    It remains to prove \cref{item:true-imp-deriv}. We are given that $\UniNumA = \langle \varepsilon,a,e \rangle$ where $a \lessO a_0$ such that $\ips{\UniNumA}$ is true. We consider two cases:
    \begin{itemize}
        \item \textit{Case A.} There is $i \in \{1,\ldots,l\}$ such that $\rank(E_i)$ is a successor ordinal. Without loss of generality, $l=1$. Depending on whether $E_1 \in \Var$, $E_1 = E^\prime \vlpa E^{\prime\prime}$ or $E_1 = \muord \beta x. F(x)$, consider the $\muMALLord \alpha$ derivation from Case 1, Case 2, and Case 3 above respectively and remove the focusing decorations $\focusedPos,\focusedNeg$ from it. In the derivation from Case 3, we additionally need to choose $\gamma$. It is given to us that $E_1 = \muord \beta x. F(x)$ and $\rank(E_1) = \sup\limits_{\delta< \beta} \left( \rank(F(\muord \delta x. F(x))) + 1 \right)$. Since the supremum of a family of ordinals cannot simultaneously be a successor ordinal and be greater than all ordinals from the family, we choose an ordinal $\gamma < \beta$ such that $\rank(E_1) = \rank(F(\muord \gamma x. F(x))+1$. 

    In each of the three cases, the goal sequent is derived from $\hat \Theta, \VARsym_0^\UniNumA,\FORInd$, and $\rank(\Theta)+1 = \rank(E_1,\ldots,E_l)$.
    Since $\ips{\UniNumA}$ is true, one of the following holds.
    \begin{enumerate}[i.]
        \item $\ips{\UniNumA} = \top$. By \cref{lemma:FORInd}, the sequent $\hat \Theta, \VARsym_0^\UniNumA,\FORInd$ is provable in $\muMALLFord \alpha$, hence so is the sequent (\ref{sequent:main}).
        \item $1 \lessO a \lessO a_0$, $\varepsilon=0$, and there is $\UniNumB = \langle 1, b, e^\prime \rangle \in \omega$ such that $\ips{\UniNumB}$ is a disjunct of $\ips{\UniNumA}$ and $\UniNumB \in \TrueIPS{a_0}$. Fix such $\UniNumB$. Note that $\ordO{b} < \ordO{a} < \rank(E_1,\ldots,E_l)$, therefore, $\ordO{b} <  \rank(\Theta)$. This enables one to apply the induction hypothesis and conclude that $\hat \Theta, \VARsym_0^\UniNumB,\VARkey_1$ is provable in $\muMALLFord \alpha$. By \cref{lemma:FORInd}, the sequent $\hat \Theta, \VARsym_0^\UniNumA,\FORInd$ is provable, hence so is (\ref{sequent:main}).
        \item $1 \lessO a \lessO a_0$, $\varepsilon=1$, and, for each $\UniNumB = \langle 1, b, e^\prime \rangle \in \omega$ such that $\ips{\UniNumB}$ is a conjunct of $\ips{\UniNumA}$, $\ips{\UniNumB}$ is true. Similarly, by the induction hypothesis, $\hat \Theta, \VARsym_0^\UniNumB,\VARkey_1$ is provable in $\muMALLFord \alpha$ for all such $\UniNumB$. By \cref{lemma:FORInd}, the sequent $\hat \Theta, \VARsym_0^\UniNumA,\FORInd$ is derivable, hence so is (\ref{sequent:main}).
    \end{enumerate}
\item
    \textit{Case B.} For each $i=1,\ldots,l$, $\rank(E_i)$ is limit. To proceed further, we shall use a form of continuity of natural sum. 
    \begin{definition}
        If $\delta = \omega^{\alpha_1} \cdot \beta_1+\ldots+\omega^{\alpha_k} \cdot \beta_k$ is the Cantor normal form of an ordinal $\delta$, i.e. $\beta_i < \omega$ and $\alpha_1>\ldots>\alpha_k$, then let $\ld(\delta) \df \alpha_k$ denote the \emph{lowest degree} in the expansion of $\delta$.
    \end{definition}
    \begin{fact}[{\cite[Remark 2.2]{Lipparini25}}]\label{fact:continuity-natural-sum}
        If $0<\ld(\delta_1) \le \ld(\delta_2)$, then $\delta_1 \natsum \delta_2 = \sup\limits_{\delta < \delta_1} (\delta \natsum \delta_2)$. 
    \end{fact}
    Without loss of generality, assume that
    \[
    0
    <
    \ld(\rank(E_1))
    \le
    \ldots
    \le
    \ld(\rank(E_l)).
    \]
    It must be the case that $E_1 = \muord \beta x. F(x)$, otherwise $\rank(E_1)$ is a successor ordinal or $0$. We know that $\rank(E_1,\ldots,E_l) > \ordO{a}$. \cref{fact:continuity-natural-sum} implies that there is $\gamma<\beta$ such that
    \begin{equation}\label{equation:gamma-choice}
        (\rank(F(\muord \gamma x. F(x)))+1) \natsum \rank(E_2) \natsum \ldots \natsum \rank(E_l) > \ordO{a}
    \end{equation}
    Consequently, 
    \[\rank(F(\muord \gamma x. F(x)),E_2,\ldots,E_l) \ge \ordO{a}
    \]
    Consider the derivation from Case 3 above with $\gamma$ satisfying \cref{equation:gamma-choice}. In this derivation (after removing the focusing decorations) the target sequent $\hat E_1,\ldots,\hat E_l, \VARsym_0^{\UniNumA}, \VARkey_1$ is derived from $\hat \Theta, \VARsym_0^\UniNumA,\FORInd$ where $\Theta = F(\muord \gamma x. F(x)),E_2,\ldots,E_l$.
    It remains to consider three cases, similar to i, ii, and iii above. \qedhere
    \end{itemize}
\end{proof}

At last, we are ready to prove \cref{theorem:lower-bound}.
\begin{proof}
    Our goal is to show that $\TrueIPS{a_0}$ is Turing-reducible to the set of sequents provable in $\muMALLord \alpha$. Given $\UniNumA < \omega$ as an input, first test whether $\UniNumA = \langle \varepsilon, a, e \rangle$ for $\varepsilon \in \{0,1\}$, $a \lessO a_0$, and $e \in \omega$. Since the set of such $\UniNumA$'s is computably enumerable and provability in $\muMALLord \alpha$ is $\Sigma^0_1$-hard (\cref{proposition:Sigma01-hardness}), we can perform such a test using provability in $\muMALLord \alpha$ as an oracle. If $\UniNumA$ is not of the above form, reject it. Otherwise, compute the least $n$ such that $a \lessO g(n)$, with $g(n)$ defined by \cref{equation:g(n)}. The inequality $a \lessO g(n)$ implies that $\ordO{a} < \ordO{g(n)} = \omega^{\omega^{(\deg \alpha)n}}$. Then, test provability of the sequent
    \begin{equation}\label{sequent:proof-theorem:lower-bound}
        \hat \Resource_{n+1}, \VARsym_0^\UniNumA, \VARkey_1
    \end{equation}
    and accept $\UniNumA$ if it is provable (otherwise, reject it).
    
    Recall that $\rank(\Resource_{n+1}) \ge \omega^{\omega^{(\deg \alpha)n}} > \ordO{a}$, by \cref{lemma:rho-ranks-lower-bound}. By \cref{lemma:lower-bound-main}, the sequent (\ref{sequent:proof-theorem:lower-bound}) is provable in $\muMALLFord \alpha$ iff $\UniNumA \in \TrueIPS{a_0}$.
\end{proof}

\section{Remaining Proofs}

\subsection{Proof of \cref{lemma:FORComp-beta}}\label{proof:lemma:FORComp-beta}

Induction on $\beta$. Since (\ref{sequent:FORComp-beta}) does not have negative formulae, it is provable iff the sequent $\Gamma, \tup(\vec{n}),p, \FORComp_\beta \focusedNeg \emptyZone$ is provable in the focused calculus. If $\beta=0$, then the latter sequent can be obtained by means of no rule, thus is underivable. This proves the base case. For $\beta>0$, each derivation of this sequent ends as follows for some $\gamma < \beta$.
\[
    \vlderivation{
    \vlin{D}{}
        {
            \Gamma, \tup(\vec{n}),p, \FORComp_\beta \focusedNeg \emptyZone
        }
        {
            \vlin{\muord \beta}{}
            {
                \Gamma, \tup(\vec{n}),p \focusedPos \FORComp_\beta
            }
            {
                \vlhy{
                    \Gamma, \tup(\vec{n}),p \focusedPos T \vlor \bigoplus\limits_{j=1}^l\bigoplus\limits_{\ins \in I_j} \encIns{\ins}^\prime_\gamma
                }
            }
        }
    }
\]
Here, $\encIns{\ins}^\prime_\gamma$ denotes the result of substituting $\FORComp_\gamma$ for $\UniNumA$ in $\encIns{\ins}$.

Note that, in the decision rule application, no formula from $\Gamma$ could be in the right zone instead of $\FORComp$. Indeed, since $\Gamma$ is $\{t,\VARkey_1\}$-locked, this would imply that the sequent $\Gamma, \tup(\vec{n}),p, \FORComp$ contains either $t$, $\VARkey_1$, or $\VARacc$, which is not the case.

The above derivation can be continued in two ways.

\textit{Case 1.} The derivation proceeds as follows for some $j \in \{1,\ldots,l\}$.
\[
\vlderivation{
\vliq{\vlor}{}
{
    \Gamma, \tup(\vec{n}),p \focusedPos T \vlor \bigoplus\limits_{j=1}^l\bigoplus\limits_{\ins \in I_j} \encIns{\ins}^\prime_\gamma
}
{
    \vliin{\vlte}{}
    {
        \Gamma, \tup(\vec{n}),p \focusedPos \neg t_j \vlte t
    }
    {
        \vlhy{p \focusedPos \neg t_j}
    }
    {
        \vliq{}{}
        {
            \Gamma, \tup(\vec{n}) \focusedPos t
        }
        {
            \vlhy{\Gamma, \tup(\vec{n}), t \focusedNeg \cdot}
        }
    }
}
}
\]
In order for $p \focusedPos \neg t_j$ to be provable, $p$ must equal $t_j$; since $p \in Q_d$, it holds, moreover, that $d=j$.

\textit{Case 2.} The derivation proceeds as follows for some $j \in \{1,\ldots,l\}$ and some $\ins \in I_j$. 
\[
\vliqf{\vlor}{}
{
    \Gamma, \tup(\vec{n}),p \focusedPos T \vlor \bigoplus\limits_{j=1}^l\bigoplus\limits_{\ins \in I_j} \encIns{\ins}^\prime_\gamma
}
{
    \Gamma, \tup(\vec{n}),p \focusedPos \encIns{\ins}^\prime_\gamma
}
\]
Two subcases arise depending on whether $\ins$ is an increment instruction or a decrement one.

\textit{Case 2a.} $\ins = \INC(p,i,r)$. Then, the derivation goes on as follows.
\[
\vlderivation{
\vliin{\vlte}{}
{
    \Gamma, \tup(\vec{n}),p \focusedPos \encIns{\ins}^\prime_\gamma
}
{
    \vlhy{p \focusedPos \neg p}
}
{
    \vliq{}{}
    {
        \Gamma, \tup(\vec{n}) \focusedPos \VARsym_i \vlpa q \vlpa \FORComp_\gamma
    }
    {
        \vlhy{\Gamma, \tup(\vec{n}^\prime) , q , \FORComp_\gamma \focusedNeg}
    }
}}
\]
Here, $\tup(\vec{n}^\prime)$ is obtained from $\tup(\vec{n})$ by adding one more occurrence of $c_i$, which means that $\vec{n}^\prime$ is obtained from $\vec{n}$ by increasing the $i$-th counter by $1$. Note that, since $p \in Q_d$ and states of  $\MinMach_1,\ldots,\MinMach_l$ are disjoint, $\ins$ belongs to $Q_d$. 

\textit{Case 2b.} $\ins = \JZDEC(p,i,q,r)$. Then, the derivation goes on as follows.
\[
\vliinf{\vlte}{}
{
    \Gamma, \tup(\vec{n}),p \focusedPos \encIns{\ins}^\prime_\gamma
}
{
    p \focusedPos \neg p
}
{
    \Gamma, \tup(\vec{n}) \focusedPos (\neg \VARsym_i  \vlte (q \vlpa \FORComp_\gamma)) \vlor (Z_i \vlan (r \vlpa \FORComp_\gamma))
}
\]
Again, two subcases arise depending on which disjunct is chosen in the distinguished formula of the topmost sequent.

\textit{Case 2b(i).} 
\[
\vlderivation{
\vlin{\vlor}{}
{
    \Gamma, \tup(\vec{n}) \focusedPos (\neg \VARsym_i  \vlte (q \vlpa \FORComp_\gamma)) \vlor (Z_i \vlan (r \vlpa \FORComp_\gamma))
}
{
    \vliin{\vlte}{}
    {
        \Gamma, \tup(\vec{n}) \focusedPos \neg \VARsym_i  \vlte (q \vlpa \FORComp_\gamma)
    }
    {
        \vlhy{\VARsym_i \focusedPos \neg \VARsym_i}
    }
    {
        \vliq{}{}
        {
            \Gamma, \tup(\vec{n}^\prime) \focusedPos q \vlpa \FORComp_\gamma
        }
        {
            \vlhy{\Gamma, \tup(\vec{n}^\prime), q, \FORComp_\gamma \focusedNeg \emptyZone}
        }
    }
}}
\]
Here $\vec{n}^\prime$ is obtained from $\vec{n}$ by decrasing the $i$-th component by $1$.

\textit{Case 2b(ii).}
\[
\vlderivation{
\vlin{\vlor}{}
{
    \Gamma, \tup(\vec{n}) \focusedPos (\neg \VARsym_i  \vlte (q \vlpa \FORComp_\gamma)) \vlor (Z_i \vlan (r \vlpa \FORComp_\gamma))
}
{
    \vlin{R}{}
    {
        \Gamma, \tup(\vec{n}) \focusedPos Z_i \vlan (r \vlpa \FORComp_\gamma)
    }
    {
        \vliin{\vlan}{}
        {
            \Gamma, \tup(\vec{n}) \focusedNeg Z_i \vlan (r \vlpa \FORComp_\gamma)
        }
        {
            \vlhy{\Gamma, \tup(\vec{n}) \focusedNeg Z_i}
        }
        {
            \vliq{}{}
            {
                \Gamma, \tup(\vec{n}) \focusedNeg r \vlpa \FORComp_\gamma
            }
            {
                \vlhy{\Gamma, \tup(\vec{n}), r, \FORComp_\gamma \focusedNeg \emptyZone}
            }
        }
    }
}}
\]
By analyzing focused derivations of the sequent 
\begin{equation}\label{sequent:FORComp-proof-1}
    \Gamma, \tup(\vec{n}) \focusedPos Z_i,
\end{equation}
a straightforward induction shows that the latter is provable iff so is a sequent of the form
\begin{equation}\label{sequent:FORComp-proof-2}
    \Gamma, \tup(\vec{n}^{\prime\prime}) , \VARacc, \neg \VARacc \focusedNeg \emptyZone
\end{equation}
for some $\vec{n}^{\prime\prime}$ which is obtained from $\vec{n}$ by decreasing some counters except for the $i$-th one. Furthermore, any focused derivation of (\ref{sequent:FORComp-proof-2}) ends as follows for some $A \in \Gamma$ (recall that $A$ is $\{t,\VARkey_1\}$-locked):
\[
\vlderivation{
\vlin{D}{}
{
    \Gamma, \tup(\vec{n}^{\prime\prime}) , \VARacc, \neg \VARacc \focusedNeg \emptyZone
}
{
    \vliq{\oplus}{}
    {
        \Gamma^\prime, \tup(\vec{n}^{\prime\prime}) , \VARacc , \neg \VARacc \focusedPos A
    }
    {
        \vliin{\vlte}{}
        {
            \Gamma^\prime, \tup(\vec{n}^{\prime\prime}) , \VARacc, \neg \VARacc \focusedPos \FORPass
        }
        {
            \vlhy{\VARacc \focusedPos \neg \VARacc}
        }
        {
            \vliq{}{}
            {
                \Gamma^\prime, \tup(\vec{n}^{\prime\prime}), \neg \VARacc \focusedPos \VARacc
            }
            {
                \vlhy{\Gamma^\prime, \tup(\vec{n}^{\prime\prime}), \VARacc, \neg \VARacc \focusedNeg \emptyZone}
            }
        }
    }
}}
\]
Here $\Gamma^\prime$ is obtained from $\Gamma$ by removing $A$. Thus, eventually, the derivation reaches the sequent $\tup(\vec{n}^{\prime\prime}), \VARacc, \neg \VARacc \focusedNeg \emptyZone$. The latter sequent is provable iff $\tup(\vec{n}^{\prime\prime})$ is empty, i.e.~$\vec{n}^{\prime\prime} = 0$. So, (\ref{sequent:FORComp-proof-1}) is provable iff one can obtain the zero vector from $\vec{n}$ by decreasing some counters except for the $i$-th one. Equivalently, this means that the $i$-th counter in $\vec{n}$ equals $0$.

Summarizing the above reasonings, we conclude that the sequent 
\[
\Gamma, \tup(\vec{n}),p, \FORComp_\beta,
\]
i.e.~the sequent (\ref{sequent:FORComp-beta}), is provable in one of the two cases:
\begin{enumerate}
    \item $\beta>0$, $p=t_d$, and $\Gamma, \tup(\vec{n}), t$ is provable;
    \item there exist $\gamma<\beta$, a vector $\vec{n}^\prime$ and a state $p^\prime$ such that 
    \[
        p(\vec{n},0,0,\ldots) \reach{\MinMach_d} p^\prime(\vec{n}^\prime,0,0,\ldots)
    \]
    and such that the sequent $\Gamma, \tup(\vec{n}^\prime),p^\prime, \FORComp_\gamma$ is provable.
\end{enumerate}
The induction hypothesis can be applied to the second statement, which implies that $\Gamma, \tup(\vec{n}^\prime),p^\prime, \FORComp_\gamma$ is provable iff a sequent of the form $\Gamma,\tup(\vec{m}),t$ is provable such that $p(\vec{n}^\prime,0,0,\ldots) \reach{\MinMach_d}^h t_d(\vec{m},0,0,\ldots)$ for $h < \gamma$. 

Combining the two cases, we obtain the desired statement.

\subsection{Proof of \cref{lemma:FORInd}}\label{proof:lemma:FORInd}

The sequent $\Gamma, \VARsym_0^\UniNumA, \FORInd$ is provable in $\muMALLord\alpha$ iff so is $\Gamma, \VARsym_0^\UniNumA, \FORInd  \focusedNeg \emptyZone$ in $\muMALLFord\alpha$. Each derivation of the latter must end by a decision rule application. In the premise of such a rule, no formula from $\Gamma$ can be chosen because this would imply that the left zone of the sequent contains $\VARkey_1$ or $\VARacc$. Thus, the chosen formula is $\FORInd$, and the premise equals $\Gamma, \VARsym_0^\UniNumA \focusedPos \FORInd$. Two cases arise depending on what the premise above that sequent is.

\textit{Case 1.} The premise is $\Gamma, \VARsym_0^\UniNumA \focusedPos \FORBase$. Any its derivation ends as follows.
\[
\vlderivation{
\vlin{R}{}
{
    \Gamma, \VARsym_0^\UniNumA \focusedPos \FORBase
}
{
    \vliq{}{}
    {
        \Gamma, \VARsym_0^\UniNumA \focusedNeg \FORBase 
    }
    {
        \vlhy{\Gamma, \VARsym_0^\UniNumA, s_1 , \FORComp , t^\bot \vlte \FORAcc  \focusedNeg \emptyZone}
    }
}}
\]
We can readily apply \cref{lemma:FORComp} to the sequent $\Gamma, \VARsym_0^\UniNumA, s_1 , \FORComp , t^\bot \vlte \FORAcc$ and conclude that it is derivable iff $f_1(\UniNumA)$ is defined and the sequent
\[
\Gamma, \tup(f_1(\UniNumA)) , t , t^\bot \vlte \FORAcc
\]
is derivable. $f_1(\UniNumA)$ is defined iff $\ips{\UniNumA}=\top$. In that case, $f_1(\UniNumA)=0$ and the latter sequent has the form
\[
\Gamma, t , t^\bot \vlte \FORAcc
\]
It is provable because each formula in $\Gamma$ is a disjunction with $\FORPass = \neg \VARacc \vlte \VARacc$ being one of the disjuncts, and $\FORAcc = \neg \VARacc \vlpa \VARacc$.

\textit{Case 2.} The premise is $\Gamma, \VARsym_0^\UniNumA \focusedPos \FORStep$. Any its derivation ends as follows.
\[
\vlderivation{
\vlin{R}{}
{
    \Gamma, \VARsym_0^\UniNumA \focusedPos \FORStep
}
{
    \vliq{}{}
    {
        \Gamma, \VARsym_0^\UniNumA \focusedNeg \FORStep
    }
    {
        \vlhy{\Gamma, \VARsym_0^\UniNumA, s_2 , \FORComp , t^\bot \vlte ((\VARsym_1^\bot \vlte \FORStep_\exists)\vlor (\VARsym_2^\bot \vlte \FORStep_\forall))  \focusedNeg \emptyZone}
    }
}}
\]
By \cref{lemma:FORComp}, the sequent
\[
\Gamma, \VARsym_0^\UniNumA, s_2 , \FORComp , t^\bot \vlte ((\VARsym_1^\bot \vlte \FORStep_\exists)\vlor (\VARsym_2^\bot \vlte \FORStep_\forall))  \focusedNeg \emptyZone
\]
is derivable iff $f_2(\UniNumA)$ is defined and the sequent
\begin{equation}\label{sequent:FORInd-1}
    \Gamma, \tup(f_2(\UniNumA)), t , t^\bot \vlte ((\VARsym_1^\bot \vlte \FORStep_\exists)\vlor (\VARsym_2^\bot \vlte \FORStep_\forall))  \focusedNeg \emptyZone
\end{equation}
is derivable. Now, two further cases arise.

\textit{Case 2a.} $\UniNumA \in S^\Sigma_a$ for $a \in \KleeneO$ such that $1 \lessO a \lessO a_0$. Then, $f_2(\UniNumA) = (\UniNumA,1,0)$, hence $\tup(f_2(\UniNumA)) = \VARsym_0^\UniNumA,\VARsym_1$. Any derivation of (\ref{sequent:FORInd-1}) ends as follows.
\[
\vlderivation{
\vlin{D}{}
{
    \Gamma, \VARsym_0^\UniNumA,\VARsym_1, t , t^\bot \vlte ((\VARsym_1^\bot \vlte \FORStep_\exists)\vlor (\VARsym_2^\bot \vlte \FORStep_\forall))  \focusedNeg \emptyZone
}
{
    \vliin{\vlte}{}
    {
        \Gamma, \VARsym_0^\UniNumA,\VARsym_1, t \focusedPos t^\bot \vlte ((\VARsym_1^\bot \vlte \FORStep_\exists)\vlor (\VARsym_2^\bot \vlte \FORStep_\forall))
    }
    {
        \vlhy{t \focusedPos t^\bot}
    }
    {
        \vlin{\vlor}{}
        {
            \Gamma, \VARsym_0^\UniNumA,\VARsym_1 \focusedPos (\VARsym_1^\bot \vlte \FORStep_\exists)\vlor (\VARsym_2^\bot \vlte \FORStep_\forall)
        }
        {
            \vliin{\vlte}{}
            {
                \Gamma, \VARsym_0^\UniNumA,\VARsym_1 \focusedPos \VARsym_1^\bot \vlte \FORStep_\exists
            }
            {
               \vlhy{ \VARsym_1 \focusedPos \VARsym_1^\bot}
            }
            {
                \vliq{}{}
                {
                    \Gamma, \VARsym_0^\UniNumA \focusedPos \FORStep_\exists
                }
                {
                    \vlhy{\Gamma, \VARsym_0^\UniNumA, \mu x. ((\VARsym_1 \vlpa x) \vlor (\VARsym_2 \vlpa x) \vlor \VARkey_2) , \FORNext_\exists  \focusedNeg \emptyZone}
                }
            }
        }
    }
}}
\]
(Recall that $\Gamma$ is $\{\VARkey_1\}$-locked which prevents its formulae from being moved to the right zone.) 

Our goal now is to analyse derivability of the sequent
\begin{equation}\label{sequent:FORStep_exists}
    \Gamma, \VARsym_0^\UniNumA, \mu x. ((\VARsym_1 \vlpa x) \vlor (\VARsym_2 \vlpa x) \vlor \VARkey_2) , \FORNext_\exists  \focusedNeg \emptyZone
\end{equation}
Let us denote the formula $\muord \beta x. ((\VARsym_1 \vlpa x) \vlor (\VARsym_2 \vlpa x) \vlor \VARkey_2)$ by $M_\beta$. Consider a sequent of the form 
\[
\Gamma, \VARsym_0^\UniNumA, \VARsym_1^\UniNumB , \VARsym_2^\UniNumC , M_\beta, \FORNext_\exists  \focusedNeg \emptyZone
\]
We shall call this sequent the $(\UniNumB,\UniNumC,M_\beta)$-sequent. Any of its derivations ends as follows for some $\gamma < \beta$:
\[
\vlderivation{
\vlin{D}{}
{
    \Gamma, \VARsym_0^\UniNumA, \VARsym_1^\UniNumB , \VARsym_2^\UniNumC , M_\beta, \FORNext_\exists  \focusedNeg \emptyZone
}
{
    \vlin{\muord \beta}{}
    {
        \Gamma, \VARsym_0^\UniNumA, \VARsym_1^\UniNumB , \VARsym_2^\UniNumC , \FORNext_\exists \focusedPos M_\beta
    }
    {
        \vlhy{\Gamma, \VARsym_0^\UniNumA, \VARsym_1^\UniNumB , \VARsym_2^\UniNumC , \FORNext_\exists \focusedPos (\VARsym_1 \vlpa M_\gamma) \vlor (\VARsym_2 \vlpa M_\gamma) \vlor \VARkey_2}
    }
}}
\]
Then, one chooses on of the three disjuncts and, as the derivation continues from bottom to top, one obtains either the $(\UniNumB+1,\UniNumC,M_\gamma)$-sequent, the $(\UniNumB,\UniNumC+1,M_\gamma)$-sequent, or the sequent
\begin{equation}\label{sequent:FORStep_exists-2}
    \Gamma, \VARsym_0^\UniNumA, \VARsym_1^\UniNumB , \VARsym_2^\UniNumC , \FORNext_\exists, \VARkey_2  \focusedNeg \emptyZone
\end{equation}
Note that (\ref{sequent:FORStep_exists}) is the $(0,0,M_\alpha)$-sequent. The above implies that (\ref{sequent:FORStep_exists}) is derivable iff so is (\ref{sequent:FORStep_exists-2}) for some $\UniNumB,\UniNumC \in \omega$. Any derivation of the latter ends as follows.
\[
\vlderivation{
\vlin{}{}
{
    \Gamma, \VARsym_0^\UniNumA, \VARsym_1^\UniNumB , \VARsym_2^\UniNumC , \FORNext_\exists, \VARkey_2  \focusedNeg \emptyZone
}
{
    \vliin{}{}
    {
        \Gamma, \VARsym_0^\UniNumA, \VARsym_1^\UniNumB , \VARsym_2^\UniNumC , \VARkey_2 \focusedPos \VARkey_2^\bot \vlte (s_3 \vlpa \FORComp \vlpa (t^\bot \vlte \VARsym_1^\bot \vlte \VARkey_1))
    }
    {
       \vlhy{ \VARkey_2 \focusedPos \VARkey_2^\bot}
    }
    {
        \vliq{}{}
        {
            \Gamma, \VARsym_0^\UniNumA, \VARsym_1^\UniNumB , \VARsym_2^\UniNumC \focusedPos s_3 \vlpa \FORComp \vlpa (t^\bot \vlte \VARsym_1^\bot \vlte \VARkey_1)
        }
        {
            \vlhy{\Gamma, \VARsym_0^\UniNumA, \VARsym_1^\UniNumB , \VARsym_2^\UniNumC, s_3 , \FORComp , t^\bot \vlte \VARsym_1^\bot \vlte \VARkey_1  \focusedNeg \emptyZone}
        }
    }
}}
\]
The topmost sequent in the above derivation, by \cref{lemma:FORComp}, is derivable iff $f_3(\UniNumA,\UniNumB,\UniNumC)$ is defined and the sequent
\[
\Gamma, \tup(f_3(\UniNumA,\UniNumB,\UniNumC)), t, t^\bot \vlte \VARsym_1^\bot \vlte \VARkey_1  \focusedNeg \emptyZone
\]
is derivable. Equivalently, one of the two holds:
\begin{enumerate}[i.]
    \item $\Witn(\UniNumA,\UniNumB,\UniNumC)$ is true and the sequent $\Gamma, \VARsym_0^\UniNumB,\VARsym_1, t, t^\bot \vlte \VARsym_1^\bot \vlte \VARkey_1  \focusedNeg \emptyZone$ is provable. 
    \item $\Witn(\UniNumA,\UniNumB,\UniNumC)$ is false and the sequent $\Gamma, \VARsym_2, t, t^\bot \vlte \VARsym_1^\bot \vlte \VARkey_1  \focusedNeg \emptyZone$ is provable. 
\end{enumerate}
It is easy to check that the second sequent is not provable because there are no occurrences of $\VARsym_1$ in it. Any derivation of the first one ends as follows:
\[
\vlderivation{
\vlin{D}{}
{
    \Gamma, \VARsym_0^\UniNumB,\VARsym_1, t, t^\bot \vlte \VARsym_1^\bot \vlte \VARkey_1  \focusedNeg \emptyZone
}
{
    \vliin{\vlte}{}
    {
        \Gamma, \VARsym_0^\UniNumB,\VARsym_1, t \focusedPos t^\bot \vlte \VARsym_1^\bot \vlte \VARkey_1
    }
    {
        \vlhy{t \focusedPos t^\bot}
    }
    {
        \vliin{\vlte}{}
        {
            \Gamma, \VARsym_0^\UniNumB,\VARsym_1 \focusedPos \VARsym_1^\bot \vlte \VARkey_1
        }
        {
            \vlhy{\VARsym_1 \focusedPos \VARsym_1^\bot}
        }
        {
            \vliq{}{}
            {
                \Gamma, \VARsym_0^\UniNumB \focusedPos \VARkey_1
            }
            {
                \vlhy{\Gamma, \VARsym_0^\UniNumB , \VARkey_1  \focusedNeg \emptyZone}
            }
        }
    }
}}
\]
Let us conclude Case 2a. Given that $\UniNumA \in S^\Sigma_a$ for $a \in \KleeneO$ such that $1 \lessO a \lessO a_0$, the sequent $\Gamma,\VARsym_0^\UniNumA,\FORInd$ is derivable $\Longleftrightarrow$ (\ref{sequent:FORInd-1}) is derivable $\Longleftrightarrow$ there is $\UniNumB \in \omega$ such that $\exists \UniNumC\, \Witn(\UniNumA,\UniNumB,\UniNumC)$ holds and the sequent $\Gamma, \VARsym_0^\UniNumB , \VARkey_1$ is derivable.

\textit{Case 2b.} $\UniNumA \in S^\Pi_a$ for $a \in \KleeneO$ such that $1 \lessO a \lessO a_0$. Then, $f_2(\UniNumA) = (\UniNumA,0,1)$, hence $\tup(f_2(\UniNumA)) = \VARsym_0^\UniNumA,\VARsym_2$. In such case, any derivation of (\ref{sequent:FORInd-1}) ends as follows.
\[
\vlderivation{
\vlin{D}{}
{
    \Gamma, \VARsym_0^\UniNumA,\VARsym_2, t , t^\bot \vlte ((\VARsym_1^\bot \vlte \FORStep_\exists)\vlor (\VARsym_2^\bot \vlte \FORStep_\forall))  \focusedNeg \emptyZone
}
{
    \vliin{\vlte}{}
    {
        \Gamma, \VARsym_0^\UniNumA,\VARsym_2, t \focusedPos t^\bot \vlte ((\VARsym_1^\bot \vlte \FORStep_\exists)\vlor (\VARsym_2^\bot \vlte \FORStep_\forall))
    }
    {
        \vlhy{t \focusedPos t^\bot}
    }
    {
        \vlin{\vlor}{}
        {
            \Gamma, \VARsym_0^\UniNumA,\VARsym_2 \focusedPos (\VARsym_1^\bot \vlte \FORStep_\exists)\vlor (\VARsym_2^\bot \vlte \FORStep_\forall)
        }
        {
            \vliin{\vlte}{}
            {
                \Gamma, \VARsym_0^\UniNumA,\VARsym_2 \focusedPos \VARsym_2^\bot \vlte \FORStep_\forall
            }
            {
                \vlhy{\VARsym_2 \focusedPos \VARsym_2^\bot}
            }
            {
                \vliq{}{}
                {
                    \Gamma, \VARsym_0^\UniNumA \focusedPos \FORStep_\forall
                }
                {
                    \vlhy{\Gamma, \VARsym_0^\UniNumA, \FORNext_\forall \focusedNeg \nu x. ((\VARsym_1 \vlpa x) \vlan (\VARsym_2 \vlpa x) \vlan \VARkey_2)}
                }
            }
        }
    }
}}
\]
Let us denote the formula $\nuord \beta x. ((\VARsym_1 \vlpa x) \vlan (\VARsym_2 \vlpa x) \vlan \VARkey_2)$ by $N_\beta$ and let us call the sequent 
\[
\Gamma, \VARsym_0^\UniNumA, \VARsym_1^\UniNumB, \VARsym_2^\UniNumC, \FORNext_\forall \focusedNeg N_\beta 
\]
the $(\UniNumB,\UniNumC,N_\beta)$-sequent. (The topmost sequent in the above derivation is the $(0,0,N_\alpha)$-sequent.) Any derivation of the $(\UniNumB,\UniNumC,N_\beta)$-sequent ends as follows:
\[
\vlderivation{
\vlin{\nuord \beta}{}
{
    \Gamma, \VARsym_0^\UniNumA, \VARsym_1^\UniNumB, \VARsym_2^\UniNumC, \FORNext_\forall \focusedNeg N_\beta , \FORNext_\forall
}
{
    {\vliiiq{}{}
    {
        \{\Gamma, \VARsym_0^\UniNumA, \VARsym_1^\UniNumB, \VARsym_2^\UniNumC, \FORNext_\forall \focusedNeg (\VARsym_1 \vlpa N_\gamma) \vlan (\VARsym_2 \vlpa N_\gamma) \vlan \VARkey_2\}_{\gamma<\beta}
    }
    {
        \vlhy{\{\Gamma, \VARsym_0^\UniNumA, \VARsym_1^{\UniNumB+1}, \VARsym_2^\UniNumC, \FORNext_\forall \focusedNeg N_\gamma \}_{\gamma<\beta}}
    }
    {
        \vlhy{\{\Gamma, \VARsym_0^\UniNumA, \VARsym_1^\UniNumB, \VARsym_2^{\UniNumC+1}, \FORNext_\forall \focusedNeg N_\gamma \}_{\gamma<\beta}}
    }
    {
        \vlhy{\{\Gamma, \VARsym_0^\UniNumA, \VARsym_1^\UniNumB, \VARsym_2^\UniNumC, \FORNext_\forall , \VARkey_2   \focusedNeg \emptyZone\}_{\gamma<\beta}}
    }}
}}
\]
This implies that the $(\UniNumB,\UniNumC,N_\beta)$-sequent is derivable iff so are the $(\UniNumB+1,\UniNumC,N_\gamma)$-sequent, the $(\UniNumB,\UniNumC+1,N_\gamma)$-sequent and the sequent $\Gamma, \VARsym_0^\UniNumA, \VARsym_1^\UniNumB, \VARsym_2^\UniNumC , \FORNext_\forall , \VARkey_2   \focusedNeg \emptyZone$, for all $\gamma<\beta$. Consequently, the $(0,0,N_\alpha)$-sequent is derivable iff so is 
\begin{equation}\label{sequent:FORNext_forall-2}
    \Gamma, \VARsym_0^\UniNumA, \VARsym_1^\UniNumB, \VARsym_2^\UniNumC , \VARkey_2 , \FORNext_\forall  \focusedNeg \emptyZone
\end{equation}
for all $\UniNumB,\UniNumC \in \omega$. Reasoning in the same way as when analysing derivations of (\ref{sequent:FORStep_exists-2}) we deduce that (\ref{sequent:FORNext_forall-2}) is provable iff one of the two holds:
\begin{enumerate}[i.]
    \item $\Witn(\UniNumA,\UniNumB,\UniNumC)$ is true and $\Gamma, \VARsym_0^\UniNumB,\VARsym_1, t, t^\bot \vlte ((\VARsym_1^\bot \vlte \VARkey_1) \vlor (\VARsym_2^\bot \vlte \FORAcc))  \focusedNeg \emptyZone$ is provable. 
    \item $\Witn(\UniNumA,\UniNumB,\UniNumC)$ is false and $\Gamma, \VARsym_2, t, t^\bot \vlte ((\VARsym_1^\bot \vlte \VARkey_1) \vlor (\VARsym_2^\bot \vlte \FORAcc))  \focusedNeg \emptyZone$ is provable. 
\end{enumerate}
In Case i, the sequent's derivation ends as follows:
\[
\vlderivation{
\vlin{D}{}
{
    \Gamma, \VARsym_0^\UniNumB,\VARsym_1, t, t^\bot \vlte ((\VARsym_1^\bot \vlte \VARkey_1) \vlor (\VARsym_2^\bot \vlte \FORAcc))  \focusedNeg \emptyZone
}
{
    \vliin{\vlte}{}
    {
        \Gamma, \VARsym_0^\UniNumB,\VARsym_1, t \focusedPos t^\bot \vlte ((\VARsym_1^\bot \vlte \VARkey_1) \vlor (\VARsym_2^\bot \vlte \FORAcc))
    }
    {
        \vlhy{t \focusedPos t^\bot}
    }
    {
        \vlin{\vlor}{}
        {
             \Gamma, \VARsym_0^\UniNumB,\VARsym_1 \focusedPos (\VARsym_1^\bot \vlte \VARkey_1) \vlor (\VARsym_2^\bot \vlte \FORAcc)
        }
        {
            \vliin{\vlte}{}
            {
                \Gamma, \VARsym_0^\UniNumB,\VARsym_1 \focusedPos \VARsym_1^\bot \vlte \VARkey_1
            }
            {
               \vlhy{ \VARsym_1 \focusedPos \VARsym_1^\bot}
            }
            {
                \vliq{}{}
                {
                    \Gamma, \VARsym_0^\UniNumB \focusedPos \VARkey_1
                }
                {
                    \vlhy{\Gamma, \VARsym_0^\UniNumB , \VARkey_1  \focusedNeg \emptyZone}
                }
            }
        }
    }
}}
\]
In Case ii, the sequent's derivation ends as follows:
\[
\vlderivation{
\vlin{D}{}
{
    \Gamma, \VARsym_2, t, t^\bot \vlte ((\VARsym_1^\bot \vlte \VARkey_1) \vlor (\VARsym_2^\bot \vlte \FORAcc))  \focusedNeg \emptyZone
}
{
    \vliin{\vlte}{}
    {
        \Gamma, \VARsym_2, t \focusedPos t^\bot \vlte ((\VARsym_1^\bot \vlte \VARkey_1) \vlor (\VARsym_2^\bot \vlte \FORAcc))
    }
    {
        \vlhy{t \focusedPos t^\bot}
    }
    {
        \vlin{\vlor}{}
        {
             \Gamma, \VARsym_2 \focusedPos (\VARsym_1^\bot \vlte \VARkey_1) \vlor (\VARsym_2^\bot \vlte \FORAcc)
        }
        {
            \vliin{\vlte}{}
            {
                \Gamma, \VARsym_2 \focusedPos \VARsym_2^\bot \vlte \FORAcc
            }
            {
                \vlhy{\VARsym_2 \focusedPos \VARsym_2^\bot}
            }
            {
                \vliq{}{}
                {
                    \Gamma \focusedPos  \FORAcc
                }
                {
                    \vlhy{\Gamma, \neg \VARacc, \VARacc  \focusedNeg \emptyZone}
                }
            }
        }
    }
}}
\]
The sequent $\Gamma, \neg \VARacc, \VARacc  \focusedNeg \emptyZone$ is derivable because each formula in $\Gamma$ is a disjunction with $\FORPass$ being one of disjuncts. Therefore, the sequent in Case ii is always derivable.

Let us conclude Case 2b. Given that $\UniNumA \in S^\Pi_a$ for $a \in \KleeneO$ such that $1 \lessO a \lessO a_0$, the sequent $\Gamma,\VARsym_0^\UniNumA,\FORInd$ is derivable $\Longleftrightarrow$ (\ref{sequent:FORInd-1}) is derivable $\Longleftrightarrow$ for each $\UniNumB \in \omega$ such that $\exists \UniNumC\, \Witn(\UniNumA,\UniNumB,\UniNumC)$, the sequent $\Gamma, \VARsym_0^\UniNumB , \VARkey_1$ is derivable.

It remains to recall that the initial sequent $\Gamma,\VARsym_0^\UniNumA,\FORInd$ is provable iff either Case 1, Case 2a, or Case 2b takes place. This proves the lemma.

\section{Conclusions}
\label{sec:concs}

In this work we investigated systems $\muMALLord\alpha$ for linear logic with least and greatest fixed points parametrised by a closure ordinal $\alpha$.
We developed its proof theoretic foundations, namely proving cut-elimination and focussing results, and applied them to classify the complexity of these logics for computable $\alpha$: $\muMALLord\alpha$ is complete for the $\omega^{\alpha^\omega}$ level of the hyperarithmetical hierarchy (under Turing reductions).

For $\alpha = \omega$, the system $\muMALLord\omega$ was previously studied in \cite{Jafarrahmani2021CSL} where a sound and complete phase semantics was given.
In this vein an interesting special case of our results is:

\begin{corollary}
    If $\omega \le \alpha < \omega^\omega$, then provability in $\muMALLord \alpha$ is $\Sigma^0_{\omega^{\omega^\omega}}$-complete. In particular, the provability problems for all such $\alpha$ are Turing-equivalent. 
\end{corollary}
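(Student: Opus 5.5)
The plan is to derive this directly from \cref{theorem:complexity-main}, applied to each $\alpha$ with $\omega \le \alpha < \omega^\omega$. The only work is an ordinal calculation showing that $\omega^{\alpha^\omega} = \omega^{\omega^\omega}$ for every such $\alpha$. The Turing-equivalence claim then follows from \cref{dfn:hyperarith-completeness-hardness} together with Spector's theorem.

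First I will compute $\deg\alpha$. Since $\omega \le \alpha < \omega^\omega$, the Cantor normal form of $\alpha$ has leading term $\omega^k \cdot n$ with $1 \le k < \omega$ and $0<n<\omega$. Hence $\deg\alpha = k$ is a positive natural number. By \cref{proposition:omega-power}, $\alpha^\omega = \omega^{(\deg\alpha)\omega} = \omega^{k\omega}$. For finite $k \geq 1$ we have $k\cdot\omega = \sup_m k m = \omega$, so $\alpha^\omega = \omega^\omega$ and therefore $\omega^{\alpha^\omega} = \omega^{\omega^\omega}$.

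I will also check that every such $\alpha$ is computable, since \cref{theorem:complexity-main} requires this. Ordinals below $\omega^\omega$ are given by finite Cantor normal forms with natural-number coefficients. Using the functions $\mult$ and $\exp$ from the Fact on ordinal notations (plus ordinal addition on notations), each such $\alpha$ has a notation in $\KleeneO$.

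Applying \cref{theorem:complexity-main} now gives that $\muMALLord\alpha$ is $\Sigma^0_{\omega^{\omega^\omega}}$-complete. Concretely, its set of theorems $\mathrm{Th}_\alpha$ is Turing-equivalent to $H(c)$ for any notation $c$ with $\ordO{c} = \omega^{\omega^\omega}$. Spector's theorem says that $H(c) \equiv_\Tred H(c')$ whenever $\ordO{c}=\ordO{c'}$, so the choice of $c$ is irrelevant. Taking two ordinals $\alpha,\alpha'$ in the range, transitivity of $\equiv_\Tred$ yields $\mathrm{Th}_\alpha \equiv_\Tred H(c) \equiv_\Tred \mathrm{Th}_{\alpha'}$.

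There is a technical subtlety: each $\mathrm{Th}_\alpha$ is a set of Gödel numbers relative to a fixed notation for $\alpha$. However, the problem is restricted to $\muMALL$ input sequents, which carry no ordinal annotations. So the problems for different $\alpha$ are all sets of codes of the same syntax, and the comparison between them makes sense.

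There is no real obstacle in this argument. The only step needing care is the identity $k\cdot\omega=\omega$ for finite $k\ge1$, which ensures the hyperarithmetical level does not depend on $\alpha$ within the range.
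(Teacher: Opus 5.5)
Your proposal is correct and follows the paper's own proof. The paper likewise notes that $1 \le \deg\alpha < \omega$, so that $\omega^{\alpha^\omega} = \omega^{\omega^{(\deg\alpha)\omega}} = \omega^{\omega^\omega}$ by \cref{proposition:omega-power}, and then invokes \cref{theorem:complexity-main}; your remarks on the computability of $\alpha$ and on Spector's theorem for the Turing-equivalence clause make explicit what the paper leaves implicit.
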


\begin{proof}
    $1 \le \deg \alpha < \omega$ for such $\alpha$, therefore, $\omega^{\alpha^\omega} = \omega^{\omega^{(\deg\alpha) \omega}} = \omega^{\omega^\omega}$. The result follows from Theorem \ref{theorem:complexity-main}.
\end{proof}

In a related direction, \cite{KuznetsovS22,Pshenitsyn24} investigate the complexity of \textit{infinitary action logic with multiplexing}.
This logic differs from $\muMALLord\omega$ in several respects. 
It is based on an intuitionistic non-commutative fragment of $\MALL$ with certain fixed points in the form of a multiplexing subexponential and Kleene $*$.
This logic turns out to be $\Sigma^0_{\omega^\omega}$-complete, a whole $\omega$-tower lower than for $\muMALLord\omega$.
It would be interesting to understand more generally how combinations of logical features relate to hyperarithmetical complexity.

\section*{Acknowledgments}

The authors thank Abhishek De, Stepan Kuznetsov and Stanislav Speranski for helpful discussions about this and related work.
The alphabetically first author is supported by a UKRI Future Leaders fellowship, \emph{Structure vs Invariants in Proofs}, project number MR/S035540/1. The second author's work was performed at the Steklov International Mathematical Center and supported by the Ministry of Science and Higher Education of
the Russian Federation (agreement no. 075-15-2025-303).

\tikhon{there are 27 refs while the submission contains 31 entries. shouls we include those 4 refs as well?}

\bibliographystyle{plain}
\bibliography{bibliography}

\end{document}